  \tikzset{
    optree/.style={scale=.5,thick,grow'=up,level distance=10mm,inner sep=1pt},
    comp/.style={draw=none,circle,fill,line width=0,inner sep=0pt},
    dot/.style={draw,circle,fill,inner sep=0pt,minimum width=3pt},
    circ/.style={draw,circle,inner sep=1pt,minimum width=4mm},
    emptycirc/.style={draw,circle,inner sep=1pt,minimum width=2mm},  
    root/.style={level distance=10mm,inner sep=1pt},
    leaf/.style={draw=none,circle,fill,line width=0,inner sep=0pt},
    nodot/.style={draw,circle,inner sep=1pt},
  }
  \newtheorem*{theorem*}{Theorem}
  \newtheorem*{cor*}{Corollary}
  \newtheorem*{prop*}{Proposition}
  \newtheorem*{conj*}{Conjecture}
  \newtheorem{theorem}{Theorem}[section]
  \newtheorem{lemma}[theorem]{Lemma}
  \newtheorem{prop}[theorem]{Proposition}
  \newtheorem{cor}[theorem]{Corollary}
  \newtheorem{conj}[theorem]{Conjecture}
  \newtheorem{notation}[theorem]{Notation}
  \theoremstyle{definition}
  \newtheorem*{definition*}{Definition}
  \newtheorem{definition}[theorem]{Definition}
  \newtheorem{remark}[theorem]{Remark}
  \newcommand{\ra}{\rightarrow}
  \DeclareMathOperator{\End}{End}           
  \DeclareMathOperator{\Hom}{Hom}           
  \DeclareMathOperator{\Aut}{Aut}           
  \DeclareMathOperator{\id}{id}             
  \DeclareMathOperator{\Spec}{Spec}         
  \newcommand{\Eig}{\mathrm{Eig} \xspace}
  \newcommand{\CH}{\mathrm{CH} \xspace}
  \newcommand{\Rz}{R_Z}
  \newcommand{\Rinf}{R_{\infty}}
  \newcommand{\bC}{\mathbb{C} \xspace}
  \newcommand{\bR}{\mathbb{R} \xspace}
  \newcommand{\bQ}{\mathbb{Q} \xspace}
  \newcommand{\bF}{\mathbb{F} \xspace}
  \newcommand{\Fpbar}{\overline{\mathbb{F}}_{p} \xspace}
  \newcommand{\bZ}{\mathbb{Z} \xspace}
  \newcommand{\bN}{\mathbb{N} \xspace}
  \newcommand{\Qp}{\mathbb{Q}_p \xspace}
  \newcommand{\Zp}{\mathbb{Z}_p \xspace}
  \newcommand{\Ql}{\mathbb{Q}_{\ell} \xspace}
\newcommand{\frk}{\mathfrak}
\newcommand{\frkp}{\frk p}
\newcommand{\Gal}{\mathrm{Gal} \xspace}
\newcommand{\Frob}{\mathrm{Frob} \xspace}
\newcommand{\Fix}{\mathrm{Fix} \xspace}
\newcommand{\Stab}{\mathrm{Stab} \xspace}
\newcommand{\bUn}{\mathds 1 \xspace}
\newcommand{\Sym}{\mathrm{Sym} \xspace}
\newcommand{\rk}{\mathrm{rk} \xspace}
\newcommand{\Tr}{\mathrm{Tr} \xspace}
\newcommand{\mcT}{\mathcal{T} \xspace}
\newcommand{\Ker}{\mathrm{Ker} \xspace}
\newcommand{\Mot}{\mathcal{M}\mathrm{ot}}
\newcommand{\Lmot}{\mathcal{LM}\mathrm{ot}}
\newcommand{\h}{\frk h}
\newcommand{\cl}{\mathrm{cl} \xspace}
  \newcommand{\CM}{\mathrm{CM}}
  \newcommand{\mot}{\mathrm{mot}}
  \newcommand{\lef}{\mathrm{Lef}}
  \newcommand{\num}{\mathrm{num}}
  \newcommand{\eqhom}{\mathrm{hom}}
  \newcommand{\ab}{\mathrm{ab}}
  \newcommand{\Vect}{\mathrm{Vect}}
  \newcommand{\Rep}{\mathrm{Rep} \xspace}
  \newcommand{\HS}{\mathrm{HS} \xspace}
  \newcommand{\Isoc}{\mathrm{Isoc} \xspace}
  \newlength{\depthofsumsign}
  \newlength\myccheight
  \tikzset{cross/.style={cross out, draw=black, minimum size=2*(#1-\pgflinewidth), inner sep=0pt, outer sep=0pt},cross/.default={1pt}}
\newcounter{todocount} 
\begin{document}

\title{Standard conjecture of Hodge type for powers of abelian varieties}

\author[Thomas Agugliaro]{Thomas Agugliaro}
\email{thomas.agugliaro@math.unistra.fr}
\address{Institut de Recherche Mathématique Avancée, UMR 7501, Université de Strasbourg et CNRS, 7 rue René-Descartes, 67000 Strasbourg CEDEX, France}

\begin{abstract}
  We prove that the standard conjecture of Hodge type holds for powers of abelian threefolds. Along the way, we also prove the conjecture for powers of simple abelian variety of prime dimension over finite fields, and in other related cases based on the notion of Frobenius rank of Lenstra--Zarhin. The main tool is a result comparing two real fiber functors on tannakian categories. A second tool is a new an explicit description of simple Lefschetz motives over finite fields, in terms of ``enriched'' Frobenius eigenvalues.
\end{abstract}

\maketitle

\setcounter{tocdepth}{1}
\tableofcontents

\section{Introduction}

This paper is devoted to the study of the standard conjecture of Hodge type for powers of abelian varieties. The goal of this conjecture is to understand intersection forms on algebraic cycles, and more precisely the signature of such forms. The standard conjecture of Hodge type, as well as many other conjectures about algebraic cycles, is not stable under products, indeed there may be more algebraic cycles in a product of two varieties than in the varieties themselves. As an instance of our main result \Cref{thm:intro-frob-rank}, we prove the conjecture for powers of abelian threefolds. This is the first result on the conjecture where all powers of a complete family are settled.

The standard conjecture of Hodge type have been proposed by Grothendieck \cite{gro68} in $1968$, as a common generalization of the Hodge index theorem, the Hodge--Riemann bilinear relations and the positivity of the Rosati involution. The general conjecture is recalled in \Cref{conj:SCHT}, let us give here an instance of it. Let $X$ be a smooth projective geometrically irreducible variety of even dimension $d$ over a field $k$, and $\mathcal Z^n_\num(X)$ denote the finite dimensional $\bQ$-vector space of algebraic cycles modulo numerical equivalence, with $0 \leqslant n \leqslant d$.
\begin{conj}
  The intersection pairing
  \begin{align*}
    \mathcal Z^{d/2}_\num(X) \times  \mathcal Z^{d/2}_\num(X) &\to \bQ \\
    \alpha, \beta &\mapsto (-1)^{d/2} \alpha \cdot \beta
  \end{align*}
  has signature $(s_+, s_-)$ with
  \[ s_+ = \rho_{d/2} - \rho_{d/2 -1} + \rho_{d/2 - 2} - \dots + (-1)^{d/2}\rho_0, \]
  where $\rho_n = \dim(\mathcal Z^n_\num(X))$.
\end{conj}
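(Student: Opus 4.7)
The plan is to treat the conjecture only in the tractable case of powers of abelian varieties, since a uniform attack on arbitrary smooth projective $X$ seems out of reach. Over a field of characteristic zero, the statement for abelian varieties is essentially classical: the cycle-class map $\mathcal Z^n_\num(X) \hookrightarrow H^{2n}(X(\mathbb C), \mathbb R)$ lands in the $(n,n)$-part, the intersection form is induced by the Hodge--Riemann pairing associated to a polarisation, and one reads off the signature component by component via the primitive Lefschetz decomposition. The real content is therefore over a finite field.

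First I would reduce to simple isogeny factors. An abelian variety is isogenous to a product $\prod A_i^{m_i}$ of simples, and the motive of any power of such a product is a polynomial in the $\h(A_i)$; positivity of intersection forms is preserved by tensor products and orthogonal direct sums, so it suffices to handle each $A_i^n$ individually. Second, following the strategy announced in the abstract, I would work in the tannakian category of Lefschetz motives generated by a simple $A$, which over a finite field is controlled by the Frobenius torus in the sense of Lenstra--Zarhin. The \emph{Frobenius rank} of $A$ bounds the dimension of Tate classes on each $A^n$, and when this bound is tight --- which happens automatically when $\dim A$ is prime --- the space of algebraic cycles coincides with the space of Tate classes. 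For an abelian threefold the simple isogeny factors have dimension $1$, $2$, or $3$, all of which fall under the prime-dimension case, so this specialisation is settled once the tannakian machinery is in place.

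The main obstacle, which is the technical heart of the paper, is transporting the signature from the cohomological side to the numerical side. The intersection pairing on $\ell$-adic cohomology is real and its signature can be computed via Deligne's Riemann hypothesis together with the Frobenius-equivariant Hodge--Riemann-type relations, but the pairing on $\mathcal Z^{d/2}_\num(X) \otimes \mathbb R$ is a priori only an abstract bilinear form. To relate them one must show that two real fiber functors on the ambient tannakian category --- the $\ell$-adic one, where positivity is known, and the algebraic one coming from numerical motives, where the conjecture lives --- agree up to an inner isomorphism. This comparison, highlighted in the abstract, is where the real work lies; once it is in hand, the signature formula $s_+ = \rho_{d/2} - \rho_{d/2-1} + \cdots + (-1)^{d/2}\rho_0$ follows from the Lefschetz decomposition of $H^d$ read off against the algebraic ranks $\rho_n$.
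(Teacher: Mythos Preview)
The displayed statement is a \emph{conjecture}; the paper does not prove it in general and offers no proof to compare against. What you have written is a sketch of how to attack the special cases the paper does treat (powers of simple abelian varieties of prime dimension, and powers of abelian threefolds), so I will assess it on those terms. There are genuine gaps.

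Your first reduction step is wrong. You claim that ``positivity of intersection forms is preserved by tensor products and orthogonal direct sums, so it suffices to handle each $A_i^n$ individually.'' This is exactly the difficulty the paper flags in its opening paragraph: the standard conjecture of Hodge type is \emph{not} known to be stable under products, because $X\times Y$ may carry algebraic cycles that are not linear combinations of tensors of cycles on the factors. Knowing the conjecture for every $A_i^n$ does not give it for $(\prod A_i^{m_i})^n$. For non-simple abelian threefolds the paper does not use your product reduction at all; it invokes Zarhin's classification to show such threefolds are neat (all Tate classes on all powers are Lefschetz), whence Milne's earlier result applies.

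Your proposed source of positivity is also off, and the key arithmetic input is missing. There is no Hodge--Riemann positivity available on $\ell$-adic cohomology in positive characteristic, so the comparison you describe between an $\ell$-adic fibre functor ``where positivity is known'' and the numerical one cannot get started. The paper instead builds an archimedean functor $\Rinf:\Lmot(\bF_q)\to\Isoc_\bR$ on \emph{Lefschetz} motives, where positivity is imported from characteristic zero, and then compares $\Rinf$ with $\Rz$ via \Cref{thm:tann-main}. This comparison alone is not enough: the heart of the argument for the exotic (non-Lefschetz) rank-$2$ pieces is the $p$-adic Hodge-theoretic constraint $4\mid s_-$ of Ancona--Marmora (\Cref{thm:ancona-marmora}), combined with an odd-multiplicity lifting, which your sketch omits entirely. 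Finally, the prime-dimension case you call ``tight'' is actually the one where exotic Tate classes may appear (Frobenius rank $\geqslant g-1$ rather than $g$); the case where Tate classes are all Lefschetz was already settled by Milne and is not the paper's contribution.
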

Apart when $X$ is a surface or $X$ is a complex algebraic variety, cases which were already known when the conjecture was formulated, the only results on the conjecture are rather recent. In \cite{mil02}, Milne proved the conjecture for algebraic cycles which are linear combinations of intersections of divisors in abelian varieties (so called Lefschetz cycles). Ancona \cite{anc21} proved the conjecture for abelian fourfolds, introducing techniques from $p$-adic Hodge theory in the picture. His tools were used by Koshikawa \cite{kos24} to prove the conjecture for squares of simple abelian varieties of prime dimension over $\Fpbar$, as well as in \cite{agu24} by the author to prove the conjecture for infinitely many abelian varieties in each dimension. Using different techniques, Ito--Ito--Koshikawa \cite{IIK25} proved the conjecture for $S \times S$, where $S$ is any K3 surface, using the Kuga--Satake construction, and $\CM$-liftings to characteristic $0$.

\subsection{Main results}
The first result concerns simple abelian varieties of prime dimension.

\begin{theorem} \label{thm:intro-simple-prime}
  The standard conjecture of Hodge type holds for powers of simple abelian varieties over $\Fpbar$ of prime dimension.
\end{theorem}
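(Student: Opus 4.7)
The plan is to deduce this theorem from the more general main result \Cref{thm:intro-frob-rank}, which establishes the standard conjecture of Hodge type for powers of abelian varieties whose Frobenius rank (in the sense of Lenstra--Zarhin) is suitably small. The key step is thus to verify that a simple abelian variety of prime dimension over $\Fpbar$ falls within the range of applicability of that theorem.

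First I would fix a simple abelian variety $A$ of prime dimension $\ell$ over $\Fpbar$ and descend it to a finite subfield $\bF_q$. Honda--Tate theory together with the classification of Newton polygons for simple abelian varieties of prime dimension provides a tight dichotomy: either $A$ is ordinary and $\End^0(A)$ is a CM field of degree $2\ell$, so that the Frobenius has $2\ell$ distinct eigenvalues paired by complex conjugation, or $A$ is non-ordinary with a very restricted Newton polygon (in particular, supersingular only when $\ell = 1$). In either case, the primality of $\ell$ severely constrains the Galois group of the splitting field of the characteristic polynomial of Frobenius, since no proper intermediate subfield can have degree a nontrivial multiple of $\ell$.

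I would then compute the Frobenius rank of $A$ explicitly by analyzing the free abelian group of multiplicative relations (modulo torsion) among its Frobenius eigenvalues. Primality of $\ell$, together with the Weil pairing relation $\alpha \bar\alpha = q$ among eigenvalues, leaves so little room for additional relations that the resulting rank can be shown to lie below the threshold required by \Cref{thm:intro-frob-rank}. Applying that theorem to the tannakian subcategory generated by $A$ then yields the standard conjecture of Hodge type for all powers $A^n$ simultaneously; the passage from $n = 2$ (treated by Koshikawa) to arbitrary $n$ is precisely what the tannakian machinery based on the comparison of real fiber functors is designed to handle.

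The main obstacle I anticipate is carrying out the Frobenius rank computation uniformly across all allowed Newton polygon types, particularly the non-ordinary cases where the endomorphism algebra is not a full CM field and where the eigenvalues may have a more delicate multiplicative structure. This step presumably requires the explicit description of simple Lefschetz motives in terms of enriched Frobenius eigenvalues (the second main tool highlighted in the abstract), both to verify that no unexpected multiplicative relations among eigenvalues arise and to identify the Lefschetz subcategory generated by $A$ within the full category of motives, so that Milne's positivity result for Lefschetz cycles can be transferred across the fiber functor comparison to conclude.
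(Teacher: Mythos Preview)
Your high-level plan---reduce to \Cref{thm:intro-frob-rank} by verifying its hypotheses---is exactly what the paper does. However, several details are off, and the route you sketch is far more complicated than necessary.

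First, you have the inequality backwards. \Cref{thm:intro-frob-rank} requires the Frobenius rank to be \emph{large}, namely $r \geqslant \tfrac{g}{m}-1$, not ``suitably small''. Your own heuristic (``so little room for additional relations'') points toward a \emph{high} rank, so the phrase ``lie below the threshold'' contradicts it. This matters because it obscures what actually has to be checked.

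Second, the paper does not compute the rank via Honda--Tate theory, Newton polygons, or the enriched-eigenvalue machinery. It simply invokes a theorem of Tankeev \cite{tan83}, which for a simple abelian variety of prime dimension $g$ over $\Fpbar$ gives directly that $r \geqslant g-1$. Combined with the general bound $r \leqslant \tfrac{g}{m}$, this forces $m=1$. Once $m=1$, hypothesis (1) of \Cref{thm:intro-frob-rank} (oddness of $m$) is trivial, and hypothesis (3) (existence of a totally real $D$ disjoint from $F$ with $[D:\bQ]=m$) is satisfied by $D=\bQ$. Hypothesis (2) is then $r \geqslant g-1$, which is exactly Tankeev's bound. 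That is the whole proof.

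Third, your proposal never addresses hypotheses (1) and (3) at all; you only discuss the rank condition. Even if your rank computation succeeded, you would still need to know $m$ is odd and that a suitable $D$ exists. The paper's argument handles both for free by first pinning down $m=1$.

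In short: replace the Honda--Tate/Newton-polygon analysis by a one-line citation of Tankeev, use it to force $m=1$, and then all three hypotheses of \Cref{thm:intro-frob-rank} are immediate. The enriched-eigenvalue description is used inside the proof of \Cref{thm:intro-frob-rank} itself, not in this deduction.
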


An important property of the standard conjecture of Hodge type is its behavior under specialization: if a specialization of a variety $X$ satisfies the conjecture, so does $X$. However, being simple is not a property which can be tested on specializations, so that one cannot extend \Cref{thm:intro-simple-prime} to any field by a spreading out argument. Fortunately, we get the following corollary using a classification result on abelian threefolds \cite{zar15}.

\begin{cor} \label{cor:intro-threefolds}
  The standard conjecture of Hodge type holds for powers of abelien threefolds over any field.
\end{cor}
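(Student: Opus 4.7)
The plan is a standard two-step reduction followed by invoking the main results of the paper: first reduce from an arbitrary base field to $\overline{\mathbb{F}}_p$ by spreading out and specialization, then decompose the threefold up to isogeny and feed the resulting simple factors into \Cref{thm:intro-simple-prime} together with its Frobenius-rank refinement mentioned in the introduction.

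First I would reduce to $k = \overline{\mathbb{F}}_p$. The standard conjecture of Hodge type is insensitive to extension of algebraically closed base fields, so one may assume $k$ is algebraically closed. Given an abelian threefold $A$ over $k$ and an integer $n \geqslant 0$, spread $A^n$ out over a finitely generated $\mathbb{Z}$-subalgebra of $k$ and specialize at a closed point of positive characteristic $p$, then base change to $\overline{\mathbb{F}}_p$. As recalled just above \Cref{thm:intro-simple-prime}, the conjecture is preserved under specialization, so it suffices to treat an arbitrary abelian threefold $A$ over $\overline{\mathbb{F}}_p$.

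By Poincar\'e reducibility, $A$ is isogenous over $\overline{\mathbb{F}}_p$ to a product $\prod_i B_i^{m_i}$ of pairwise non-isogenous simple abelian varieties whose dimensions sum to $3$. Since the conjecture is isogeny-invariant, it suffices to prove it for $\prod_i B_i^{n m_i}$. Each simple factor $B_i$ has prime dimension $1$, $2$, or $3$. When there is a single simple factor, so that $A^n$ is isogenous to a power of a single simple abelian variety of prime dimension, \Cref{thm:intro-simple-prime} applies directly.

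The hard part will be the genuinely mixed cases $B^a \times E^b$ or $E_1^{a_1} \times E_2^{a_2} \times E_3^{a_3}$ with $B$ a simple abelian surface and the $E_i$ pairwise non-isogenous elliptic curves: \Cref{thm:intro-simple-prime} cannot be applied factor by factor because the standard conjecture is not stable under products. Here one invokes the Frobenius-rank refinement of \Cref{thm:intro-simple-prime} referenced in the abstract, using Zarhin's classification \cite{zar15} of the Frobenius-eigenvalue structure of abelian threefolds to verify the Frobenius-rank hypothesis in each mixed case. This final input is precisely where the ``classification result on abelian threefolds'' enters, and is the crucial obstacle distinguishing the corollary from a purely formal consequence of \Cref{thm:intro-simple-prime}.
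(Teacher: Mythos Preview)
Your reduction to $\overline{\mathbb{F}}_p$ and your treatment of the simple case are correct and match the paper. The gap is in the non-simple case.

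You propose to handle products like $B^a \times E^b$ by invoking the Frobenius-rank refinement, i.e.\ \Cref{thm:intro-frob-rank}. But that theorem has as a standing hypothesis that $A$ is \emph{geometrically simple}; the notions of multiplicity $m$ and the bound $r \geqslant g/m - 1$ are formulated for a simple abelian variety and the whole machinery of \Cref{sec:decomp-abel-motiv} (enriched eigenvalues, the structure of the group $H$, odd liftability) is built on that assumption. So you cannot feed a genuine product into \Cref{thm:intro-frob-rank}, and applying it to the individual factors gets you nothing for the product, as you yourself note.

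The paper's route in the non-simple case is different and does not use \Cref{thm:intro-frob-rank} at all. Zarhin's result \cite[Thm~1.1]{zar15} says that a \emph{non-simple} abelian threefold over $\overline{\mathbb{F}}_p$ is neat: every Tate class on every power of $A$ is a Lefschetz class. Once all the relevant cycles are Lefschetz, the standard conjecture of Hodge type follows from Milne's positivity for Lefschetz cycles \cite[Remark 3.7]{mil02} (or \cite[Proposition 5.4]{anc21}). So the dichotomy is: simple $\Rightarrow$ \Cref{thm:intro-simple-prime}; non-simple $\Rightarrow$ neat $\Rightarrow$ Milne. The input from \cite{zar15} is not a Frobenius-rank computation but precisely this neatness statement.
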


While studying the Tate conjecture for abelian varieties, Lenstra--Zarhin \cite{zar94} \cite{LZ93}, introduced an invariant of abelian varieties, called the Frobenius rank. Namely, given an abelian variety $A$ which is geometrically simple over $\bF_q$, with Frobenius eigenvalues $\pi_1, \dots, \pi_{2g}$, the Frobenius rank is the rank of the multiplicative group generated by $\frac{\pi_i^2}{q}$, $1\leqslant i \leqslant 2g$. This invariant measures the complexity of the ring of Tate classes in an abelian variety, c.f. \Cref{rmk:mult-relation-tate}. It has also been used by \cite{kos24} to deduce results on the standard conjecture of Hodge type from \cite{anc21}. The multiplicity of a simple abelian variety $A$ over $\bF_q$ is the multiplicity of the roots in its characteristic polynomial. The following result is the one from which \Cref{thm:intro-simple-prime} is deduced.

\begin{theorem} \label{thm:intro-frob-rank}
  Let $A$ be a geometrically simple abelian variety over $\bF_q$ of dimension $g$, Frobenius rank $r$ and multiplicity $m$. Assume that
  \begin{enumerate}
  \item $m$ is odd
  \item $r \geqslant \frac{g}{m} - 1$
  \item there exists a totally real field $D$ disjoint from the center $F$ of $\End(A)\otimes\bQ$ such that $A$ has $\CM$ by $D \cdot F$.
  \end{enumerate}
  Then all powers of $A$ satisfy the standard conjecture of Hodge type.
\end{theorem}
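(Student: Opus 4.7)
\medskip

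\noindent \textbf{Proof plan.} The strategy is to combine the tannakian reformulation of SCHT with a $\CM$-lifting argument, using the Frobenius rank hypothesis to control the simple motives appearing in tensor powers of $A$.

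As a first step I would reduce SCHT for all powers $A^n$ to a positivity statement for a Weil form on the real tannakian subcategory $\langle \h(A) \rangle^\otimes$ of the category of numerical motives. In this formulation, made precise in \cite{anc21,kos24,agu24}, the signature of the intersection pairing on $\mathcal Z^*_\num(A^n)$ is controlled by the choice of a real fiber functor on this category, and SCHT becomes positivity of the Weil form with respect to the ``numerical'' fiber functor. The advantage of this reformulation is that it only has to be checked on each simple summand of $\h(A^n)$.

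Hypothesis (3) provides a $\CM$ structure by the totally real compositum $D \cdot F$ with $D$ disjoint from the center $F$. By Honda--Tate theory and CM lifting this produces a complex abelian variety $\tilde A$ whose Betti realization is a polarized Hodge structure; the Hodge--Riemann bilinear relations give positivity there. The Hodge realization defines a \emph{second} real fiber functor on $\langle \h(A) \rangle^\otimes$, and invoking the main tool of the paper---comparing two real fiber functors on a tannakian category---reduces SCHT on each simple object to a compatibility check between the two functors, i.e.\ to the fact that they agree up to a positive scalar on each simple motive.

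The third ingredient is the second main tool: the explicit description of simple Lefschetz motives over $\bF_q$ in terms of enriched Frobenius eigenvalues. The odd multiplicity $m$ and the inequality $r \geqslant g/m - 1$ should be interpreted as saying that the multiplicative group generated by the normalized eigenvalues $\pi_i^2/q$ is as large as the Lefschetz piece allows, leaving no room for extra simple motives outside the Lefschetz subcategory. Thus every simple summand of $\h(A^n)$ is a Lefschetz motive, and the required fiber-functor compatibility follows from Milne's theorem \cite{mil02} on Lefschetz cycles.

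The step I expect to be the main obstacle is precisely this classification: translating the Frobenius rank bound and the oddness of $m$ into the statement that no non-Lefschetz simple motive can appear in $\langle \h(A) \rangle^\otimes$. This amounts to a precise count of multiplicative relations among the enriched $\pi_i^2/q$, crucially exploiting both the disjointness of $D$ from $F$ (so that the CM lift carries a genuinely ``transversal'' real structure) and the fine data attached to each eigenvalue in the new description of simple Lefschetz motives. Once this is done, positivity transfers from the CM lift by the fiber functor comparison, together with Milne's Lefschetz positivity, and the specialization compatibility of SCHT gives the statement over $\bF_q$.
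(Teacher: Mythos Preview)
Your plan has a genuine gap at the decisive step. You claim that the bound $r \geqslant g/m - 1$ forces every simple summand carrying algebraic classes to be a Tate twist of $\bUn$, so that Milne's positivity for Lefschetz classes finishes the argument. This is not what the bound does. The subgroup $H \subset \Eig(A)$ of enriched eigenvalues $\lambda$ with $\rho(\lambda)$ a power of $q$ has rank $g/m + 1 - r \leqslant 2$; when this equals $2$ there \emph{are} simple Lefschetz motives $M$ of rank~$2$ in $\langle A \rangle^\otimes$ with enriched eigenvalue $\lambda \neq [q]^n$ but $\rho(\lambda) = q^n$. Such an $M$ satisfies $\dim_{\bQ} \Rz(M(n)) = 2$: it is full of algebraic (Tate) classes that are not Lefschetz, and Milne's theorem says nothing about the intersection form on them. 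The rank hypothesis only constrains these exotic pieces to have rank~$2$; it does not eliminate them.

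What the paper does for such an exotic $M$ is the following. Hypothesis~(3) produces a $\CM$ lift $N$ over $\mathcal O_K$ with $R_{\frkp}(N) \cong M^{\oplus m}$ --- not $M$ itself, so Hodge--Riemann positivity on $N$ cannot be transferred directly. The $p$-adic input of Ancona--Marmora then gives that for a polarization $\eta_0$ on $N$, the signature $(s_+, s_-)$ of $\Rz(\eta_{0,\frkp})$ satisfies $4 \mid s_-$. Separately, the fiber-functor comparison shows that $\Rz(\eta)^{\oplus m}$ and $\Rz(\eta_{0,\frkp})$ have the same signature, where $\eta$ is the restriction to one copy of $M$. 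The three candidate signatures for $\Rz(\eta)$ yield $(2m,0)$, $(m,m)$, $(0,2m)$ on $M^{\oplus m}$; since $m$ is odd, only the first has $4 \mid s_-$. That is where the oddness of $m$ is actually used --- not in the classification of simple motives, but to rule out bad signatures after the $p$-adic step. Your proposal omits the Ancona--Marmora input entirely and misidentifies the roles of both the rank bound and the parity of~$m$.
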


When $m = 1$, $D=\bQ$ will satisfy condition (3). As a consequence of \Cref{thm:intro-frob-rank} and the constructions of \cite{agu24}, we get

\begin{cor}
  For all prime numbers $p$ and all even integers $g \geqslant 4$, there exists infinitely many simple abelian varieties of dimension $g$ over $\Fpbar$ whose powers satisfy the standard conjecture of Hodge type, and such that there are Tate classes on $A$ which are not Lefschetz, and that do not come from specializing Hodge classes of $\CM$-liftings.
\end{cor}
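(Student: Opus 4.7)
My plan is to combine \Cref{thm:intro-frob-rank} with the explicit constructions of abelian varieties produced in \cite{agu24}. That earlier paper exhibits, for each prime $p$ and each even $g \geqslant 4$, an infinite isogeny-family of simple abelian varieties $A$ over $\Fpbar$ of dimension $g$ carrying Tate classes which are neither Lefschetz nor obtained by specialization of Hodge classes on a $\CM$-lifting. Granting this construction, the corollary reduces to checking that those $A$ satisfy the hypotheses of \Cref{thm:intro-frob-rank}, so that the theorem can be applied to conclude that all powers $A^n$ satisfy the standard conjecture of Hodge type.

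The first step is to verify the numerical conditions on multiplicity and Frobenius rank. The abelian varieties constructed in \cite{agu24} are built from Weil $q$-numbers whose conjugates are pairwise distinct, so the characteristic polynomial of Frobenius has simple roots; this yields multiplicity $m = 1$, which settles condition (1). With $m=1$, condition (2) reads $r \geqslant g-1$, and I would trace through the construction of \cite{agu24} to confirm that the Weil numbers were chosen precisely so that the multiplicative group generated by the ratios $\pi_i^2/q$ has the required large rank (the constructions were designed to maximize Frobenius rank, since this is exactly what produces exotic Tate classes via \Cref{rmk:mult-relation-tate}). If some specific family in \cite{agu24} does not already attain $r \geqslant g-1$, one can select, inside the same moduli of ordinary Weil $q$-numbers, a subfamily with Frobenius rank at least $g-1$; infiniteness of the subfamily up to isogeny follows by the same counting argument used in \cite{agu24}.

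For condition (3), the hypothesis $m=1$ makes it automatic: the center $F = \End(A)\otimes\bQ$ is a $\CM$-field of degree $2g$, so $A$ has $\CM$ by $F$ itself, and we may take $D = \bQ$, which is disjoint from $F$ and totally real, with $D \cdot F = F$. With all three hypotheses of \Cref{thm:intro-frob-rank} verified, the theorem applies and gives the standard conjecture of Hodge type for every power of $A$. The ``non-Lefschetz'' and ``not coming from $\CM$-liftings'' properties of the Tate classes are preserved because they were built into the abelian varieties of \cite{agu24} from the start.

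The main obstacle is bookkeeping rather than mathematics: carefully matching the invariants produced in \cite{agu24} (which was written with a different, weaker criterion in mind) against the sharper hypotheses of \Cref{thm:intro-frob-rank}, in particular the lower bound $r \geqslant g-1$ on the Frobenius rank. Once this alignment is done, the corollary is a direct application of the theorem, and the requirement $g \geqslant 4$ even enters only through \cite{agu24}, whose construction of exotic Tate classes needs $g$ large enough (and of appropriate parity) for the multiplicative relations producing non-Lefschetz Tate cycles to exist.
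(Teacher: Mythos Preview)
Your overall approach matches the paper's: combine the constructions of \cite{agu24} with \Cref{thm:intro-frob-rank} in the case $m=1$, where condition~(3) is automatic with $D=\bQ$. The paper gives no more detail than this.

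However, your justification for why the abelian varieties of \cite{agu24} should have large Frobenius rank is backwards. You write that ``the constructions were designed to maximize Frobenius rank, since this is exactly what produces exotic Tate classes via \Cref{rmk:mult-relation-tate}.'' In fact \Cref{rmk:mult-relation-tate} says the opposite: when the realization map $\rho$ is injective (equivalently, when the Frobenius rank attains its maximum $g/m$), all Tate classes are Lefschetz. Exotic Tate classes arise precisely from \emph{nontrivial} multiplicative relations, i.e.\ from the Frobenius rank being \emph{strictly smaller} than $g/m$. What is needed here, with $m=1$, is that $r$ equals exactly $g-1$: large enough for condition~(2) of the theorem, yet small enough that $\Ker(\rho)$ has rank one and produces non-Lefschetz Tate classes. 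The varieties of \cite{agu24} are constructed to sit exactly at this threshold, not to have maximal rank. Your fallback plan (``select a subfamily with Frobenius rank at least $g-1$'') is therefore aiming at the wrong target; the delicate point is to engineer $r=g-1$ while simultaneously controlling the non-Lefschetz and non-$\CM$-lifting properties, which is the substance of \cite{agu24}.
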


In \cite{agu24}, we construct many abelian varieties satisfying the hypothesis of \Cref{thm:intro-frob-rank} with $m=1$. Using the same tools, it should be possible to construct abelian varieties satisfying the hypthesis for arbitrary $m > 1$.

There are two main difficulties in \Cref{thm:intro-frob-rank}. The first one is to prove the conjecture for \textit{powers} of $A$, which may contain a lot more algebraic cycles than $A$ itself, and even worse, it is not clear how the intersection forms of $A^n$ are related to intersection forms on $A$. Moreover, factors in the motive of the abelian varieties we consider have in general very high ranks, making the use of \cite[Theorem 8.1]{anc21} impossible. This was the key in \cite{anc21}, but also in \cite{agu24} and \cite{kos24}.

\subsection{Tools and idea of the proof of \Cref{thm:intro-frob-rank}}
A first step is to reformulate the standard conjecture of Hodge type using tannakian categories. Namely, in \Cref{prop:cyc-mot-comp} we write the intersection forms on algebraic cycles as realizations of quadratic forms defined on motives, or on Lefschetz motives (defined in \Cref{conv:lef-mot}).

Then, we define in \Cref{prop:arch-functor-lefschetz} a tensor functor
\[\Rinf: \Lmot(\bF_q) \to \Isoc_\bR,\]
on the category $\Lmot(\bF_q)$ of Lefschetz motives over the finite field $\bF_q$ taking values in the archimedean tannakian category $\Isoc_\bR$ of real isocristals. This functor can be interpreted as a functorial archimedean cohomology theory for abelian varieties, or an archimedean Tate module. The caveat of this realization is that it is only well-defined on Lefschetz motives, or in other words, it is a cohomology theory without the cycle class map. In absence of the cycle class map, it becomes unclear that such a functor is useful to study intersections of algebraic cycles. This motivates \Cref{thm:tann-main} and \Cref{cor:two-quad-forms}, which are among the main novelties of this paper. These results allows one to compare the previous archimedean realization and the spaces of algebraic cycles in a Lefschetz motive.

Then, a clean classification of simple Lefschetz motives over a finite field is given in \Cref{prop:lefmot-classification}. The simple Lefschetz motives are seen to be parametrized by \textit{enriched eigenvalues} of abelian varieties, which are defined in \Cref{def:enriched-eigenvalue-group}. This classification allows us to perform efficient dévissages in \Cref{sec:decomp-abel-motiv}. Namely, \Cref{prop:decomp-main} provides us with a decomposition of Lefschetz motives over finite fields. Then for orthogonality reasons, it suffices to prove a positivity result for each summand. By analyzing the decomposition, we see that the conjecture is already known for two of the factors, and \Cref{thm:exomot} gives a description of the other summands, which are called exotic.

From this description and \Cref{cor:two-quad-forms}, we are left to prove a positivity statement for a simple exotic Lefschetz motives $M$ of rank $2$ over a finite field with a quadratic form $q$. Such a motive $M$ does not lift to characteristic $0$, making the use of \cite[Theorem 8.1]{anc21} impossible. After an analysis of the situation, we see that $M^m$ does lift (where $m$ is the multiplicity). This allows us to use a theorem of Ancona--Marmora \cite[Theorem 3.2]{AM25}, which tells us that signature $(s_+, s_-)$ of an auxiliary quadratic form $q'$ on $M^m$ satisfies $4 \mid s_-$. On the other hand, if the quadratic form $q$ has signature $(2,0)$ (resp. $(1,1)$ or $(0,2)$), then $q^{\oplus m}$ on $M^m$ has signature $(2m, 0)$ (resp. $(m,m)$ or $(0,2m)$). Finally, we use \Cref{cor:two-quad-forms} to obtain that $q$ and $q'$ have the same signature, and as $m$ is odd, the only case where $4 \mid s_-$ is when $q$ is positive definite. This was the tannakian reformulation, and this concludes the proof.

\subsection{Organization}
This paper is organized as follows. In \Cref{sec:algebr-cycl-motiv}, we collect some basic properties on algebraic cycles, motives, tannakian categories and the standard conjecture of Hodge type. In \Cref{sec:abel-vari-abel}, we define the group of enriched eigenvalues associated to an abelian variety, and we use it to give a classification of simple Lefschetz motives over finite fields. Various corollaries of \Cref{thm:intro-frob-rank} are proved in \Cref{sec:positivity-results}. \Cref{sec:arch-fiber-funct} is devoted to the study of real valued fiber functors on tannakian categories. Namely, we define a functor from Lefschetz motives to real isocrystals, and we give results allowing one to compare quadratic forms in different realizations. Using the description of simple objects of the category of Lefschetz motives, in \Cref{sec:decomp-abel-motiv} we describe a fine decomposition result for motives of abelian varieties satisfying the hypthesis of \Cref{thm:intro-frob-rank}. Finally, in \Cref{sec:appl-posit-inters} we prove \Cref{thm:intro-frob-rank}, using the tannakian result \Cref{cor:two-quad-forms} on the category of Lefschetz motives, and the input of $p$-adic Hodge theory from \cite{AM25}.

\subsection*{Acknowledgements}
The author would like to thank G. Ancona for his many helpful comments during the writing of the article, E. Ambrosi, F. Charles, A. Marmora, J. Fres\'an and B. Kahn for useful discussions. The author is particularly grateful to E. Ambrosi for discussions about geometrically simple abelian varieties with no simple specializations.

This research was partly supported by the grant ANR-23-CE40-0011 of Agence National de la Recherche.

\section{Conventions} \label{sec:conventions}

\subsection{Number fields and $p$-adic fields}

A number field is a finite extension of $\bQ$. A totally real field $R$ is a number field, whose complex embedding are all real valued. A $\CM$-field is a number field $L$ which is a quadratic imaginary extension of a totally real field, and a $\CM$-algebra is a finite product of $\CM$-fields.

If $p$ is a prime number, a $p$-adic field is a finite extension $K$ of $\bQ_p$. The ring of integers of $K$ is denoted $\mathcal O_K$, it is the subring of elements of $K$ which are integral over $\Zp$. Moreover, $\mathcal O_K$ is a local field, and its residue field $\kappa$ is a finite field. We will also call $\kappa$ the residue field of $K$.

\subsection{Motives} \label{conv:motives}
We will work with the category of homological motives $\Mot(S)$ over a base $S$ with coefficients in $\bQ$. In practice, $S$ will be either a $p$-adic field $K$, its ring of integers $\mathcal O_K$, or a finite field $\bF_q$. We define it as in \cite[\S 2.3]{AM25}, as the quotient of the category of Chow motives over S with respect to homological equivalence. We will denote the unit object of $\Mot(S)$ by $\bUn$, and the Tate twists by $\bUn(n)$ for $n \in \bZ$.

Here, homological equivalence is taken with respect to any classical Weil cohomology (Betti, $\ell$-adic, de Rham). When $S$ is $p$-adic field or its ring of integers, it is independant of the choice thanks to the comparison theorems. When $S$ is a field of characteristic $p$, we will use homological equivalence with respect to $\ell$-adic cohomology with a fixed prime number $\ell \not = p$ in contexts in which it will plays no role.

Given a smooth projective scheme $X$ over $S$, we denote by $\frk h^\star(X)$ its associated motive. Whenever $X$ satisfies the standard Künneth conjecture, we will denote $\frk h^n(X)$ the summand of $\frk h^\star(X)$ whose realization is the $n$-th cohomology group of $X$.

We denote by $\Rz$ the functor
\begin{align*}
  \Rz: \Mot(k) &\to \Vect_\bQ \\
  M &\mapsto \Hom(\bUn, M)/\num.
\end{align*}
Recall that a map $f: \bUn \to M$ is numerically equivalent to $0$ if for any map $g:M \to \bUn$ we have $g\circ f = 0$.

We denote by $\Mot(S)^\ab$ the tannakian subcategory spanned by motives of abelian schemes. Let $K$ be a $p$-adic field, with ring of integers $\mathcal O_K$, residue field $k$ and maximal ideal $\frkp$. The specialization $\mathcal A \mapsto \mathcal A_\frkp$ where $A_\frkp$ is the special fibre of $\mathcal A$ at $\frkp$ extends to a functor
\[R_\frkp: \Mot^\ab(\mathcal O_K) \to \Mot(k). \]

\subsection{Realization functors} \label{conv:hdg-real}

Let $k$ be a field, with an algebraic closure $\overline k$, and $\ell \not = \mathrm{char}(k)$. The $\ell$-adic realization is the functor
\[ R_\ell: \Mot(k) \to \Rep_{\Ql}(\Gal(\overline k / k)),\]
such that $\h^\star(X)$ is sent to $H^\star(X \times_k \overline k, \Ql)$ endowed with the natural Galois action on $\ell$-adic cohomology.

When we have an embedding $\iota: k \hookrightarrow \bC$, we will consider the Hodge realization. The Hodge realization is the functor
\[R_{H,\iota}: \Mot(k) \to \HS_\bQ, \]
with values in $\bQ$-Hodge structures such that $\frk h^\star(X)$ is sent to $H^\star(X(\bC), \bQ)$ endowed with its Hodge structure. When $\iota$ is the inclusion from a subfield of $\bC$, we will omit it from the notations.

\subsection{Lefschetz motives} \label{conv:lef-mot}
We will also make use of the category of Lefschetz motives $\Lmot(S)$ over a base $S$ which is either a $p$-adic field $K$, its ring of integers $\mathcal O_K$, a finite field $\bF_q$ or $\Fpbar$. It was defined in \cite{mil99} when $S$ is a field, and is constructed in the same way as the usual category of motives, but allowing only abelian varieties, and correspondances which are sums of intersections of divisors, so called Lefschetz cycles. It also make sense when the base is $\mathcal O_K$ as follows:
\[ \Lmot(\mathcal O_K) \subset \Lmot(K)\]
is the smallest tannakian category containing $\frk h^1(A)$ for each abelian variety $A/K$ with good reduction.

We will denote by $\Lmot^{\mathrm{CM}}(K)$ the tannakian subcategory generated by motives of abelian varieties of $\CM$ type. Recall that an abelian variety $A$ is of $\CM$-type if it has dimension $g$ and that there exists a $\CM$-algebra $L$ of degree $2g$ with an embeding $L \hookrightarrow \End^0(A)$. When $A$ is an abelian variety over $K$, we will denote $\langle \frk h^1(A) \rangle^\otimes$ by $\langle A \rangle^\otimes$, with $\langle \frk h^1(A) \rangle^\otimes$ as in \Cref{def:subcat-generated-object}.

\subsection{Comparison and specializations} \label{conv:mot-lefmot-specializations}

Whenever one has a Lefschetz motive defined by a projector $P$ which is a Lefschetz cycle, one can see $P$ as an algebraic cycle, and it is still a projector in the usual category of motives. This procedure defines a functor
\[\Lmot(S) \to \Mot(S).\]
The specialization $A \mapsto A_s$, where $A_s$ is the special fibre of the Néron model of $A$ extends to a specialization functor
\begin{align*}
  R_\frkp: \Lmot(\mathcal O_K) &\to \Lmot(k) \\
   M &\mapsto M_{\frk p}
\end{align*}
where $k$ is the residue field of $\mathcal O_K$.
Moreover, we will also denote by $R_{H,\iota}$ (resp. $R_\ell$, $\Rz$) the composition $R_{H, \iota}$ from \Cref{conv:hdg-real} (resp. $R_\ell$, $\Rz$ from \Cref{conv:motives}) with the functor
\[ \Lmot(k) \to \Mot(k). \]

\section{Algebraic cycles and motives} \label{sec:algebr-cycl-motiv}
In this section, we will review algebraic cycles \eqref{sec:algebr-cycl-stand}, tannakian categories \eqref{sec:revi-tann-categ} and motives \eqref{sec:motiv-stand-conj}.

\subsection{Algebraic cycles and the standard conjecture of Hodge type} \label{sec:algebr-cycl-stand} In this section, we will fix a field $k$ and $H$ a Weil cohomology for smooth projective varieties over $k$, satisfying the Hard Lefschetz property. We will also consider $X/k$ a smooth projective, geometrically irreducible algebraic variety of dimension $d$, and $\mathcal L$ an ample divisor on $X$.

\begin{definition}
  For $0 \leqslant n \leqslant d$, we denote by $\CH^n(X)$ the Chow group of codimension $n$ cycles of $X$. That is the free $\bQ$-vector space generated by irreducible closed subvarieties of $X$, of codimension $n$, modulo rational equivalence. The intersection pairing
  \[ \CH^n(X) \times \CH^{d-n}(X) \to \CH^d(X) \xrightarrow{\deg} \bQ \]
  will be denoted $Z \cdot Z'$.
\end{definition}

Two cycles $Z_1, Z_2 \in \mathcal \CH^n(X)$ are said to be numerically equivalent if for all $Z' \in \CH^{d-n}(X)$, $Z_1 \cdot Z' = Z_2 \cdot Z'$. This defines an equivalence relation on $\CH^n(X)$, called numerical equivalence.

Two cycles $Z_1, Z_2$ are said to be equivalent with respect to the Weil cohomology $H$ if $\cl(Z_1) = \cl(Z_2)$. This defines an equivalence relation on $\CH^n(X)$, called homological equivalence.

\begin{definition}
  The quotient of $\CH^n(X)$ by numerical equivalence (resp. homological equivalence) will be denoted $\mathcal Z^n_{\num}(X)$ (resp. $\mathcal Z^n_{\eqhom}(X)$).
\end{definition}

Homological equivalence is finer than numerical equivalence, so that we get a map
\[ \mathcal Z^n_\eqhom(X) \to \mathcal Z^n_\num(X). \]
This map is compatible with the intersection products
\begin{align*}
  \mathcal Z^n_\num(X) &\times \mathcal Z^m_\num(X) \to \mathcal Z^{n+m}_\num(X), \\
  \mathcal Z^n_\eqhom(X) &\times \mathcal Z^m_\eqhom(X) \to \mathcal Z^{n+m}_\eqhom(X).
\end{align*}

\begin{theorem}[Hard Lefschetz] \label{thm:hard-lefschetz}
  Let $[\mathcal L] \in H^2(X)$ be the image of $\mathcal L$ by the cycle class map. The Lefschetz operator  $\cup [\mathcal L]$ induces isomorphisms for all $0 \leqslant n \leqslant d$
  \[ \cup [\mathcal L]^{d-n} : H^n(X) \to H^{2d-n}(X).\]
\end{theorem}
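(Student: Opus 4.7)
The plan is essentially to observe that the statement is a restatement of a standing hypothesis of the section, rather than something requiring a substantive argument. At the beginning of \S\ref{sec:algebr-cycl-stand} the author fixes a Weil cohomology theory $H$ on smooth projective varieties over $k$ \emph{satisfying the Hard Lefschetz property}. The displayed conclusion is nothing other than that property applied to the ample divisor $\mathcal L$ and the variety $X$ already fixed in the same paragraph. So my proof would be a one-line pointer back to the hypothesis: the cup-product map $\cup[\mathcal L]^{d-n}\colon H^n(X)\to H^{2d-n}(X)$ being an isomorphism is, by definition, the Hard Lefschetz property of $(H,X,\mathcal L)$.

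If the author wants to be more substantial, the natural thing is to list the classical Weil cohomologies for which Hard Lefschetz is actually known, since these are the only ones that will be used in the paper. For Betti cohomology (and, by comparison, for algebraic de Rham cohomology in characteristic zero), Hard Lefschetz is the classical theorem of Lefschetz proved analytically via Hodge theory; for $\ell$-adic cohomology of smooth projective varieties over any field with $\ell\neq\mathrm{char}(k)$, it is the theorem of Deligne in Weil~II. For crystalline/rigid cohomology in characteristic $p$, it follows from the $\ell$-adic case together with the comparison theorems in $p$-adic Hodge theory which the paper will use later. So the body of the ``proof'' is just this short enumeration of references.

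The main obstacle, frankly, is that there is no obstacle: the theorem is being quoted and labeled for ease of reference in later sections (it will be invoked in the definition of the primitive decomposition, in the statement of the standard conjecture of Hodge type, and when comparing intersection forms to quadratic forms on motives). I would therefore keep the ``proof'' to a sentence or two and spend any remaining effort fixing clean notation (e.g., recording the primitive subspaces $H^n(X)_{\mathrm{prim}}=\ker\bigl(\cup[\mathcal L]^{d-n+1}\bigr)$ and the Lefschetz decomposition) so that it can be cited without fuss in the sections on motives and tannakian categories that follow.
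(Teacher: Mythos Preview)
Your reading is correct and matches the paper exactly: Theorem~\ref{thm:hard-lefschetz} is stated without proof, as it is simply the Hard Lefschetz property that was imposed as a standing hypothesis on the Weil cohomology $H$ at the opening of \S\ref{sec:algebr-cycl-stand}. The paper provides no argument and none is needed here.
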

\begin{definition}
  The $n$-th primitive part of $X$ is the kernel $P^n_{\mathcal L}(X)$ of
  \[  \cup [\mathcal L]^{d-n+1}: H^n(X) \to H^{2d-n+2}(X).\]
\end{definition}
The following proposition is a consequence of \Cref{thm:hard-lefschetz}, it shows that the whole cohomology of $X$ can be expressed using primitive parts.
\begin{prop}
  For every $0 \leqslant n \leqslant d$, the following map is an isomorphism
  \[
    \begin{array}{rrcl}
      \bigoplus_{0 \leqslant i \leqslant \frac{n}{2}} P^{n-2i}_{\mathcal L}(X) & \ra & H^n(X) \\
      (\alpha_i)_{0 \leqslant i \leqslant \frac{n}{2}} & \mapsto & \sum_{0 \leqslant i \leqslant \frac{n}{2}} \alpha_i \cup [\mathcal L]^i
    \end{array}
  \]
  Moreover, the induced decomposition on $H^n(X)$ is orthogonal with respect to the intersection forms.
\end{prop}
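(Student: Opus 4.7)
The plan is a standard induction on $n$, using Hard Lefschetz to prove in one stroke the two-term decomposition $H^n(X) = P^n_{\mathcal L}(X) \oplus [\mathcal L] \cup H^{n-2}(X)$, and then iterating. The base case $n = 0$ is trivial since $P^0_{\mathcal L}(X) = H^0(X)$ (the operator $\cup [\mathcal L]^{d+1}$ lands in $H^{2d+2}(X) = 0$). For $n = 1$, a similar check gives $P^1_{\mathcal L}(X) = H^1(X)$.

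For the inductive step, write $L = \cup[\mathcal L]$. I would first show that the sum $P^n_{\mathcal L}(X) + L(H^{n-2}(X)) \subseteq H^n(X)$ is direct: if $\alpha = L(\beta)$ with $L^{d-n+1}(\alpha) = 0$, then $L^{d-n+2}(\beta) = 0$ in $H^{2d-n+2}(X)$, so $\beta = 0$ by Hard Lefschetz applied to $H^{n-2}(X)$, and hence $\alpha = 0$. To show the sum exhausts $H^n(X)$, I would fix $\alpha \in H^n(X)$ and use that $L^{d-n+2} : H^{n-2}(X) \to H^{2d-n+2}(X)$ is an isomorphism (Hard Lefschetz again), hence there exists a unique $\beta \in H^{n-2}(X)$ with $L^{d-n+2}(\beta) = L^{d-n+1}(\alpha)$; then $\alpha - L(\beta) \in \ker L^{d-n+1} = P^n_{\mathcal L}(X)$ and we are done. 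Applying the inductive hypothesis to $H^{n-2}(X)$ then yields the claimed isomorphism.

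For orthogonality, I would take the intersection form on $H^n(X)$ (for $n \leqslant d$) to be $Q(\alpha, \beta) = \int_X \alpha \cup \beta \cup [\mathcal L]^{d-n}$. Given two summands with $i < j$, pick $\alpha = L^i(\alpha')$ with $\alpha' \in P^{n-2i}_{\mathcal L}(X)$ and $\beta = L^j(\beta')$ with $\beta' \in P^{n-2j}_{\mathcal L}(X)$. Then
\[ Q(\alpha, \beta) = \int_X L^{d-n+i+j}(\alpha') \cup \beta'. \]
Since $\alpha'$ is primitive of degree $n - 2i$, one has $L^{d-n+2i+1}(\alpha') = 0$; the inequality $d-n+i+j \geqslant d - n + 2i + 1$ (equivalent to $i < j$) then forces $L^{d-n+i+j}(\alpha') = 0$, giving $Q(\alpha, \beta) = 0$.

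I do not expect any serious obstacle: the whole argument is a formal consequence of Hard Lefschetz, and the only subtlety is to be careful about the range of indices and the distinction between $H^n(X)$ and $H^{n-2}(X)$ when invoking \Cref{thm:hard-lefschetz}. The only true input is the Hard Lefschetz isomorphism, which is assumed as a hypothesis on the Weil cohomology $H$.
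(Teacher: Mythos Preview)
Your argument is correct and is exactly the standard derivation the paper has in mind: the paper does not spell out a proof but simply records the proposition as ``a consequence of \Cref{thm:hard-lefschetz}'', and your induction via the two-term splitting $H^n(X)=P^n_{\mathcal L}(X)\oplus L(H^{n-2}(X))$ together with the primitivity bound for orthogonality is precisely how that consequence is obtained.
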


We are now able to state the main positivity result in characteristic $0$, see \cite[Theorem 6.32]{voi02}.
\begin{theorem}[Hodge--Riemann relations] \label{thm:hdg-riem}
  Assume that $k = \bC$ and that $H$ is singular cohomology with rational coefficients, endowed with its pure Hodge structure. Then for all $0 \leqslant n \leqslant d$, the bilinear form
  \[\begin{array}{rrcl}
    \eta_{\mathcal L}: & P^n_{\mathcal L}(X) \times P^n_{\mathcal L}(X) & \to & H^{2d}(X, \bQ) \cong \bQ \\
    &(\alpha, \beta) & \mapsto & i^n [\mathcal L]^{d-n} \cup \alpha \cup \beta
  \end{array} \]
  is a polarization of Hodge structures.
\end{theorem}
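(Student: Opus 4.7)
The plan is to prove the statement via the classical Kähler–Hodge theoretic argument, reducing to a pointwise identity of Weil for primitive forms on a Kähler manifold. First I would upgrade the setup to a Kähler one: since $\mathcal L$ is ample, it defines a projective embedding, and by Chow's theorem $X(\bC)$ inherits a Kähler metric whose Kähler class is a positive multiple of $[\mathcal L]$. I may therefore replace $[\mathcal L]$ by a Kähler form $\omega$ and work at the level of smooth differential forms, representing each cohomology class by its harmonic representative.

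Next I would set up the Kähler package. Let $L = \omega \wedge -$ act on the de Rham complex, $\Lambda$ its formal adjoint, and $H = [L, \Lambda]$ the counting operator; these form an $\mathfrak{sl}_2$-triple. The Kähler identities (such as $[\Lambda, \bar\partial] = -i\partial^*$) imply that the Laplacian $\Delta$ commutes with $L$, $\Lambda$, and with the projections onto $(p,q)$-components. Passing to cohomology via the identification with harmonic forms, this yields both the Hard Lefschetz theorem and a compatibility of the primitive decomposition with the Hodge decomposition:
\[
P^n_{\mathcal L}(X, \bC) = \bigoplus_{p+q = n} P^{p,q}, \qquad P^{p,q} \subset H^{p,q}(X).
\]

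The heart of the proof is Weil's pointwise identity: for a primitive $(p,q)$-form $\alpha$ with $p + q = n$,
\[
\ast\, \alpha \;=\; (-1)^{n(n+1)/2}\, i^{p-q}\, \frac{L^{d-n}}{(d-n)!}\, \alpha.
\]
Combined with the basic $L^2$-identity $\int_X \alpha \wedge \ast\bar\alpha = \|\alpha\|^2 \geq 0$, this shows that for a nonzero primitive $\alpha \in P^{p,q}$ the integral $i^{p-q} \int_X \omega^{d-n} \wedge \alpha \wedge \bar\alpha$ is strictly positive, up to a uniform positive normalization constant. Orthogonality between distinct Hodge types under $\eta_{\mathcal L}$ follows from a type count ($(d,d)$ must appear in the product), and the symmetry $\eta_{\mathcal L}(\alpha, \beta) = (-1)^n \eta_{\mathcal L}(\beta, \alpha)$ follows from commuting cup products. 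Matching the factor $i^{p-q}$ produced by Weil's identity with the prefactor $i^n$ in the statement is a sign bookkeeping via the Weil operator $C$ acting as $i^{p-q}$ on $(p,q)$-classes, so that the two Hodge–Riemann bilinear relations are satisfied and $\eta_{\mathcal L}$ is a polarization.

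The main obstacle is establishing Weil's pointwise formula itself, which is a careful linear-algebra computation on the exterior algebra of a Hermitian vector space using the $\mathfrak{sl}_2$-representation structure of primitive forms; everything else reduces to tracking signs and invoking Hodge theory on compact Kähler manifolds. A secondary check is that the form lands in the rational subspace $H^{2d}(X, \bQ) \cong \bQ$ after clearing the $i^n$, which is immediate from the fact that $\omega^{d-n} \cup \alpha \cup \beta \in H^{2d}(X, \bQ)$ when $\alpha, \beta$ are rational.
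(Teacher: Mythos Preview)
The paper does not supply its own proof of this theorem; it simply records it as a classical result and points to \cite[Theorem~6.32]{voi02}. Your sketch is precisely the standard K\"ahler--Hodge argument found there: pass to harmonic forms on the compact K\"ahler manifold $X(\bC)$, use the Lefschetz $\mathfrak{sl}_2$-triple to make the primitive decomposition compatible with the Hodge decomposition, and derive positivity from Weil's pointwise Hodge-star identity combined with the $L^2$ inner product---so your approach is correct and coincides with the cited reference.
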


The goal of the standard conjecture of Hodge type is to find a motivic version of \Cref{thm:hdg-riem}. The same definitions as before also make sense in the context of algebraic cycles.

\begin{definition} \label{def:primitive-cycles}
  The $n$-th primitive parts of $X$ is defined for $0 \leqslant n \leqslant d/2$ as the kernel $\mathcal P_{\mathcal L}^n(X)$ of
  \[ \cdot \mathcal L^{d-2n+1}: \mathcal Z^n_{\num}(X) \to \mathcal Z^{d-n+1}_\num(X). \]
\end{definition}
\begin{remark}
  The primitive part in cohomology is denoted with a straight $P$, whereas the letter denoting primitive part in algebraic cycles is calligraphed. Hence, the cycle class map relates $\mathcal P^n_{\mathcal L}(X)$ with $P^{2n}_{\mathcal L}(X)$.
\end{remark}

\begin{conj}[Standard conjecture of Hodge type] \label{conj:SCHT}
  For any $0 \leqslant n \leqslant d/2$, the pairing
  \[
    \begin{array}{rrcl}
      \eta_{\mathcal L}: &\mathcal P^n_{\mathcal L}(X) \times \mathcal P^n_{\mathcal L}(X) & \to & \bQ \\
                         &(\alpha, \beta) & \mapsto & (-1)^n \mathcal L^{d-2n} \cdot \alpha \cdot \beta
    \end{array} \]
  is positive definite.
\end{conj}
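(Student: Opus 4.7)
The plan is to first dispose of the characteristic-zero case from classical Hodge theory, then attempt to reduce positive characteristic to it by lifting, and finally identify precisely where this reduction must be supplemented by a genuinely new ingredient.

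When $k$ embeds into $\bC$, the conjecture follows from \Cref{thm:hdg-riem}: the cycle class map factors $\eta_{\mathcal L}$, up to the sign $(-1)^n$, through the restriction of the Hodge--Riemann polarization to the $(n,n)$-piece of the real primitive cohomology $P^{2n}_{\mathcal L}(X)_\bR \cap H^{n,n}$, where positivity is exactly the content of \Cref{thm:hdg-riem}; a standard argument then transfers this positivity from $\mathcal Z^n_\eqhom$ to $\mathcal Z^n_\num$. A spreading-out argument propagates the statement to any field of characteristic zero.

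In positive characteristic, the natural next move is to lift $(X,\mathcal L)$ to a smooth projective model over the ring of integers of a $p$-adic field and, using the specialization formalism recalled in \Cref{conv:mot-lefmot-specializations}, to confront the primitive numerical classes on $X$ with those on the characteristic-zero generic fibre. Whenever every primitive class on $X$ comes by reduction from a primitive class on the generic fibre, the previous paragraph concludes. For abelian varieties this is enabled in many situations by Honda--Tate theory and $\CM$-liftings, which is what makes the setting of powers of abelian varieties especially promising.

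The hard part will be that this is not generally the case: $\mathcal Z^n_\num(X)$ in characteristic $p$ can be strictly larger than the image of specialization, and on powers $A^n$ of an abelian variety such extra classes are the norm rather than the exception, so the conjecture remains open in general. The route I would then take, which the introduction announces, is to build an archimedean fibre functor $R_\infty$ on Lefschetz motives taking values in real isocrystals, to classify simple Lefschetz motives over $\bF_q$ by enriched Frobenius eigenvalues, to decompose $\frk h^\star(A^n)$ so that the positivity statement splits orthogonally over the simple summands, and on each exotic summand to reduce positivity to a signature computation on a rank two motive. The main obstacle will be the tannakian comparison between the real quadratic form coming from $R_\infty$ and the one coming from algebraic cycles: this comparison must play the role of Hodge--Riemann in positive characteristic, and it cannot come from a naive lifting, since the exotic summands at issue do not themselves lift to characteristic zero.
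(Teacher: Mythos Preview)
The statement you are asked to prove is a \emph{conjecture}, not a theorem; the paper does not give a proof of it and none is currently known in general. The only thing the paper records immediately after stating \Cref{conj:SCHT} is the remark that when $k=\bC$ the conjecture follows from the Hodge--Riemann relations (\Cref{thm:hdg-riem}) via the cycle class map. Your first paragraph reproduces exactly this observation, and that is the full extent of what the paper proves about \Cref{conj:SCHT} in general.

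Your remaining paragraphs are not a proof but a strategy outline, and you say so yourself (``the conjecture remains open in general''). As an outline it is accurate and matches the paper's program: the archimedean fibre functor $\Rinf$ of \Cref{prop:arch-functor-lefschetz}, the classification of simple Lefschetz motives by enriched eigenvalues (\Cref{prop:lefmot-classification}), the orthogonal decomposition of \Cref{prop:decomp-main}, and the tannakian comparison of \Cref{cor:two-quad-forms} are precisely the ingredients used in \Cref{sec:appl-posit-inters}. But be aware that even this program does not prove \Cref{conj:SCHT} for arbitrary $X$, nor even for arbitrary abelian varieties: it yields only the special cases listed in \Cref{thm:main} (odd multiplicity, large Frobenius rank, and a suitable $\CM$ structure), from which \Cref{thm:intro-simple-prime} and \Cref{cor:intro-threefolds} are deduced. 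So the honest summary is: the characteristic-zero reduction you give is correct and is all the paper claims at this point; the rest is a faithful sketch of the paper's later work, but it is not, and is not presented by the paper as, a proof of the conjecture.
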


\begin{remark}
  If $k = \bC$, then \Cref{conj:SCHT} follows from \Cref{thm:hdg-riem}. Indeed, the cycle class map
  \[
    \begin{array}{rrcl}
      \cl^n: \mathcal Z^n(X) \to H^{2n}(X)
    \end{array}
  \]
  is compatible with intersection, and it has values in the $(n,n)$-part of the Hodge decomposition.
\end{remark}

\subsection{Review on tannakian categories:} \label{sec:revi-tann-categ}for a more in depth study of tannakian categories, see \cite{saa72} or \cite{del90}.
\begin{definition}
  Given a ring $K$, a $K$-linear tannakian category is a category $\mathcal T$
  \begin{itemize}
  \item endowed with the structure of a rigid symmetric monoidal category,
  \item which is an abelian $K$-linear category,
  \item such that there exists a faithfully flat map $K \to K'$ such that $\mcT$ admits a $K$-linear exact and faithful tensor functor to the category of projective $K'$-modules of finite rank. 
  \end{itemize}
\end{definition}

\begin{definition}
  Let $\mcT$ be a $K$-linear tannakian category, and $K \to K'$ be a faithfully flat map. A $K'$-linear fiber functor on $\mcT$ is a $K$-linear exact, faithful tensor functor
  \[\omega: \mcT \to \Vect_{K'},\] where $\Vect_{K'}$ denotes the category of projective $K'$-modules of finite rank.

  Given a fiber functor $\omega$ on $\mcT$ over $K$, we denote by $G_\omega$ the $K$-group $\underline{\Aut}^\otimes(\omega)$ of tensor automorphism of $\omega$.
\end{definition}

\begin{theorem}
  Given a $K$-linear tannakian category $\mcT$ with fiber functor $\omega$ over $K$, the functor
  \begin{align*}
    \mcT &\to \Rep_K(G_\omega) \\
    M &\mapsto \omega(M)
  \end{align*}
  is an equivalence of categories.
\end{theorem}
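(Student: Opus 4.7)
The plan is to follow the standard tannakian reconstruction argument, as in \cite{del90} or \cite{saa72}, after reducing to the neutral case (where the fiber functor takes values in $\Vect_K$ itself). The functor $\omega$ naturally lifts: for every object $M$, the group $G_\omega$ acts on $\omega(M)$ by the very definition of $\underline{\Aut}^\otimes(\omega)$ as representable functor of tensor automorphisms; this action is compatible with morphisms, direct sums, tensor products and duals, so one obtains a $K$-linear tensor functor $\widetilde\omega: \mcT \to \Rep_K(G_\omega)$ lifting $\omega$. First I would verify the representability of $G_\omega$ as an affine group scheme, by realizing its coordinate ring as a coend
\[
A := \int^{M \in \mcT} \omega(M)^\vee \otimes \omega(M),
\]
equipped with its canonical Hopf algebra structure coming from the tensor structure and duals on $\mcT$; the functor $R \mapsto \Aut^\otimes(\omega \otimes_K R)$ is then corepresented by $A$.

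Next I would prove \textbf{full faithfulness}. Because $\omega$ is $K$-linear, exact and faithful, it suffices to show that the induced map
\[
\Hom_{\mcT}(\bUn, N) \to \Hom_{\Rep_K(G_\omega)}(\bUn, \widetilde\omega(N)) = \omega(N)^{G_\omega}
\]
is bijective for every $N$, by the usual trick of replacing $\Hom_{\mcT}(M, N)$ with $\Hom_{\mcT}(\bUn, M^\vee \otimes N)$. Surjectivity on invariants is the content of the construction: by definition, an element fixed by every tensor automorphism of $\omega$ comes from a morphism in $\mcT$; the formal verification uses that $\omega$ is a tensor functor and the cointerpretation of $A$ above.

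The main obstacle is \textbf{essential surjectivity}. For this I would first verify that the essential image $\widetilde\omega(\mcT) \subset \Rep_K(G_\omega)$ is a tannakian subcategory stable under subobjects and quotients; stability under kernels and cokernels comes from exactness of $\omega$, and stability under subobjects follows from full faithfulness combined with the fact that, inside an abelian tensor category, a subobject of $\widetilde\omega(M)$ is cut out by an idempotent after passing to $\End(M) \otimes \End(M)$ and using semisimplicity-style arguments --- in general one uses the identification of subrepresentations with sub-comodules and the fact that every comodule over $A$ is a filtered colimit of finite-dimensional sub-comodules, each arising from some $\omega(M)$. Concretely, given a finite-dimensional representation $V$ of $G_\omega$, the corresponding $A$-comodule structure factors through a finite-dimensional sub-coalgebra of $A$, which by the coend presentation sits inside $\omega(M)^\vee \otimes \omega(M)$ for some $M \in \mcT$; this exhibits $V$ as a subquotient of some $\omega(M)^{\oplus n}$, hence as $\widetilde\omega$ of an object of $\mcT$ by what we already proved. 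This step is the technical heart; once it is established, combining it with full faithfulness yields the equivalence $\mcT \simeq \Rep_K(G_\omega)$. The non-neutral case, where the fiber functor is only defined after a faithfully flat extension $K \to K'$, is handled by fpqc descent from the neutral equivalence over $K'$ to a $G_\omega$-gerbe description over $K$.
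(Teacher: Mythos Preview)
The paper does not give a proof of this theorem; it is stated in \S\ref{sec:revi-tann-categ} as a classical fact, with the blanket reference at the start of that subsection to \cite{saa72} and \cite{del90} for details on tannakian categories. Your outline is essentially the standard argument from those references (coend presentation of the Hopf algebra $A$, full faithfulness via $\omega(N)^{G_\omega}$, essential surjectivity by exhibiting any finite $A$-comodule as a subquotient of some $\omega(M)^{\oplus n}$), so there is no discrepancy to flag. One small remark: the phrase ``semisimplicity-style arguments'' for stability under subobjects is misleading since $\mcT$ need not be semisimple; the correct mechanism, which you do state afterwards, is purely the comodule-theoretic one.
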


\begin{remark}
  Given a $K$-linear tannakian category $\mathcal T$ with a fiber functor $\omega_1$ over $K_1$. If $G_\omega$ is an abelian group scheme, then for any $\omega_2$ fiber functor over $K_2$, we have that $G_{\omega_2}$ is also an abelian group scheme. Moreover, in this case there exists an abelian group scheme $G/K$ such that for any fiber functor $\omega$ over $K'$, we have canonical identifications $G_\omega \cong G_{K'}$. More precisely, $G$ can be described as $\underline{\Aut}^{\otimes}(\id_{\mathcal T})$.
\end{remark}

\begin{definition}
  When $\mathcal T$ satisfies the conditions of the previous remark, we will say that $\mathcal T$ has an abelian tannaka group. Moreover, we will call $G=\underline{\Aut}^{\otimes}(\id_{\mathcal T})$ the Tannaka group of $\mathcal T$, even when $\mathcal T$ is non-neutral.
\end{definition}

\begin{definition} \label{def:subcat-generated-object}
  Given a tannakian category $\mathcal T$ and $V$ an object of $\mathcal T$, we denote by $\langle V \rangle^\otimes$ the smallest full tannakian subcategory of $\mathcal T$ containing $V$.
\end{definition}

\begin{remark}
  Alternatively, objects of $\langle V \rangle^\otimes$ are subquotients of $V^{\otimes n} \otimes (V^*)^{\otimes m}$ for $n$ and $m$ in $\bZ$.
\end{remark}

Using the monoidal structures of tannakian categories, we can do most of the usual multilinear algebra.

\begin{definition}
  Let $V$ be an object of a tannakian category $\mcT$, and $\mathbb L$ be an invertible object. A quadratic form on $V$, with values in $\mathbb L$ is a symmetric map
  \[ q: V \otimes V \to \mathbb L. \]
  More precisely, denote by $\tau$ the automorphism of $V \otimes V$ swapping the factors, we ask that the map $q$ satisfies $q \circ \tau = q$.

  A quadratic form is said to be non-degenerate if the map
  \[ V \to \underline{\Hom}(V, \mathbb L) \]
  obtained by adjunction is an isomorphism.
\end{definition}

Another classical multilinear object is the determinant, we describe here how it is obtained in tannakian categories.

\begin{definition}
  Assume that $K$ is a $\bQ$-algebra, let $V$ be an object of $\mathcal T$ of dimension $d$. Consider the action of $S_d$ on $V^{\otimes d}$ which permutes the pure tensors. We denote by $\bigwedge^dV$ the isotypical part of $V^{\otimes d}$ relative to the signature representation of $S_d$. More precisely the action of $S_d$ on $V^{\otimes d}$ extends to a module structure for the algebra $K[S_n]$, and
  \[\frac{1}{n!}\sum_{\sigma\in S_n} \varepsilon(\sigma)[\sigma] \in K[S_n],\]
  is an idempotent element. Hence it yields a projector of $V^{\otimes d}$, whose image is denoted $\bigwedge^d V$.
\end{definition}

\begin{lemma}
  The map
  \[K \to \Hom(\wedge^dV, \wedge^d V)\]
  mapping $1$ to $\id_{\bigwedge ^d V}$ is an isomorphism, whose inverse is the trace.
\end{lemma}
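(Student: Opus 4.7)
The plan is to reduce to the classical statement for projective modules of finite rank via a fiber functor. Let $\omega : \mcT \to \Vect_{K'}$ be a fiber functor, where $K \to K'$ is faithfully flat. Since $\omega$ is an exact $K$-linear tensor functor and $K$ is a $\bQ$-algebra, $\omega$ commutes with the antisymmetrizer idempotent in $K[S_d]$, giving a canonical isomorphism $\omega(\bigwedge^d V) \cong \bigwedge^d \omega(V)$. Being a tensor functor between rigid categories, $\omega$ also preserves categorical traces, so $\omega(V)$ is a projective $K'$-module whose rank equals the image in $K'$ of $\mathrm{tr}(\id_V) = d$, hence of constant rank $d$.

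The second step is purely classical: for a projective $K'$-module $M$ of rank $d$, the module $\bigwedge^d M$ is invertible, so the evaluation and coevaluation maps of $\bigwedge^d \omega(V)$ are isomorphisms in $\Vect_{K'}$. Since $\omega$ is exact and faithful, hence conservative, the corresponding evaluation and coevaluation of $\bigwedge^d V$ are already isomorphisms in $\mcT$, that is, $\bigwedge^d V$ is an invertible object of $\mcT$.

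Third, I would invoke the general fact that for an invertible object $L$ of a rigid $K$-linear tensor category, the map $K \to \End(L)$, $\lambda \mapsto \lambda \cdot \id_L$, is an isomorphism. This follows from the chain of identifications $\End(L) \cong \Hom(\bUn, L \otimes L^\vee) \cong \Hom(\bUn, \bUn) = K$, under which $\mathrm{coev}_L$ corresponds to $\id_L$ and to $1 \in K$. Applied to $L = \bigwedge^d V$, this proves the first half of the lemma. For the statement about the trace, it suffices to check $\mathrm{tr}(\id_{\bigwedge^d V}) = 1$; passing to $\omega$ this becomes the tautological equality $\rk(\bigwedge^d \omega(V)) = 1$ for the invertible $K'$-module $\bigwedge^d \omega(V)$, and injectivity of $K \to K'$ transports the identity back to $K$. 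Once $\End(\bigwedge^d V) = K\cdot\id_{\bigwedge^d V}$ is established, the $K$-linear trace must therefore send $\lambda \cdot \id$ to $\lambda$, and is the inverse of the map in question.

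I do not anticipate a real conceptual obstacle; the only delicate work is bookkeeping: verifying that the fiber functor $\omega$ genuinely intertwines the formation of $\bigwedge^d$, the unit and counit of duality, and the categorical trace with their classical counterparts in $\Vect_{K'}$, and then applying the conservativity of $\omega$ to descend each conclusion from $\omega(\bigwedge^d V)$ back to $\bigwedge^d V$.
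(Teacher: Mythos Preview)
Your proposal is correct and follows essentially the same approach as the paper: both arguments pass to a fiber functor $\omega$ over a faithfully flat $K'$, use that $\bigwedge^d$ of a rank-$d$ projective module is invertible, and then descend back using faithfulness/conservativity of $\omega$. The paper is simply more compressed: it first observes $\Tr(\id_{\wedge^d V})=1$ (so the trace is a left inverse), then notes directly that the composite $K \xrightarrow{c} \End(\wedge^d V) \xrightarrow{\omega} \End(\wedge^d \omega(V))$ is an isomorphism and concludes by faithfulness of $\omega$, whereas you take the slightly longer route through the intermediate statement that $\wedge^d V$ is an invertible object.
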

\begin{proof}
  As $\wedge^d V$ has dimension $1$, we have $\Tr(\id_{\wedge^d V}) = 1$, so that the trace is a left inverse of
  \[c: K \to \Hom(\wedge^dV, \wedge^d V).\]
  It suffices to check that $c$ is an isomorphism after tensoring by a faithfully flat $K$-algebra, so that we can assume that $\mcT$ has a $K$-linear fiber functor $\omega$. The composition
  \[ K \xrightarrow{c} \Hom(\wedge^d V, \wedge^d V) \xrightarrow{\omega} \Hom(\wedge^d \omega(V), \wedge^d \omega(V)) \]
  is an isomorphism. As $\omega$ is faithful, we deduce that $c$ is itself an isomorphism.
\end{proof}

\begin{definition} \label{def:det-charpoly}
  Let $V$ be an object of a $K$-linear tannakian category $\mcT$, and $u$ an endomorphism of $V$. Let $d$ denote the dimension of $V$, and $\bigwedge^d V$ be the $d$-th exterior power of $V$. Then
  \begin{itemize}
  \item $u$ induces an endomorphism $\bigwedge u \in \End(\bigwedge^d V)$. The scalar
    \[\det(u)=\Tr(\bigwedge u)\]
    is called determinant of $u$.
  \item Consider the object $V_{[X]}=V \otimes_K K[X]$, in the $K[X]$-linear tannakian category $\mcT_{[X]}= \mcT \otimes_K K[X]$, and the endomorphism $X \id_{V_{[X]}} - u$. The determinant of this endomorphism is an element of $K[X]$, which is denoted
    \[\chi_u(X) = \det(X\id_V -u).\]
  \end{itemize}
\end{definition}

The previous definition is intrinsic, it only uses the monoidal structure.

\begin{prop} \label{prop:charpoly-fiberfunctor}
  Given a tannakian category $\mcT$, and an endomorphism $u$ of an object $V$ of $\mcT$ we have for any fiber functor $\omega: \mathcal T \to \Vect_K$:
  \begin{enumerate}
  \item $\det(u) = \det(\omega(u))$
  \item $\chi_u = \chi_{\omega(u)}$
  \end{enumerate}
  where in both of these equalities the left hand side is defined by \Cref{def:det-charpoly}, and the right hand side coincides with the usual notion for linear endomorphisms.
\end{prop}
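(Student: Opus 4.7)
The plan is to show that the operations used in Definition \ref{def:det-charpoly} — forming the exterior power $\bigwedge^d V$, taking the trace of an endomorphism, and extending scalars from $K$ to $K[X]$ — are all preserved by any fiber functor $\omega$. Both equalities will then follow formally.

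For the exterior power: since $\omega$ is a symmetric monoidal $K$-linear functor, the canonical isomorphism $\omega(V^{\otimes d}) \simeq \omega(V)^{\otimes d}$ is $S_d$-equivariant. Since $\omega$ is in addition exact and $K$-linear, it preserves images of idempotents in $K[S_d]$, in particular of the antisymmetrizer $\frac{1}{d!}\sum_\sigma \varepsilon(\sigma)[\sigma]$. This yields a canonical isomorphism $\omega(\bigwedge^d V) \simeq \bigwedge^d \omega(V)$, under which $\omega(\bigwedge^d u)$ is identified with $\bigwedge^d \omega(u)$. For the trace, I would use that any tensor functor between rigid symmetric monoidal categories preserves duals together with evaluation and coevaluation morphisms, hence also the categorical trace of an endomorphism $f: W \to W$, defined intrinsically as the composition
\[ \mathbb{1} \xrightarrow{\mathrm{coev}} W \otimes W^{\ast} \xrightarrow{f \otimes \id} W \otimes W^{\ast} \xrightarrow{\sim} W^{\ast} \otimes W \xrightarrow{\mathrm{ev}} \mathbb{1}. \]
Applying this to $f = \bigwedge^d u$ with $d = \dim V$ and combining it with the previous step yields
\[ \omega(\det u) = \omega(\Tr(\bigwedge^d u)) = \Tr(\bigwedge^d \omega(u)) = \det(\omega(u)), \]
where both sides live canonically in $K = \End(\mathbb{1})$, on which $\omega$ acts as the identity by $K$-linearity. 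This proves (1).

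For (2), I would simply extend scalars: the fiber functor $\omega$ induces a $K[X]$-linear fiber functor $\omega_{[X]}: \mathcal{T}_{[X]} \to \Vect_{K[X]}$ sending $V_{[X]}$ to $\omega(V) \otimes_K K[X]$ and the endomorphism $X\,\id_{V_{[X]}} - u$ to $X\,\id - \omega(u)$. Applying (1) in the $K[X]$-linear tannakian category $\mathcal{T}_{[X]}$ to this endomorphism gives exactly the desired equality $\chi_u(X) = \chi_{\omega(u)}(X)$.

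The only genuine ``obstacle'' is bookkeeping: one must verify that every natural isomorphism used (associativity, symmetry, duality, unit) and the base change $\mathcal{T} \rightsquigarrow \mathcal{T}_{[X]}$ are respected by $\omega$. No conceptual difficulty arises here, since determinant, trace and characteristic polynomial are all defined purely in terms of the symmetric rigid monoidal structure, which $\omega$ preserves tautologically.
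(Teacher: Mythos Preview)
Your proof is correct and follows the same approach as the paper: both observe that the determinant and characteristic polynomial are defined purely in terms of the rigid symmetric monoidal structure, which any fiber functor preserves. The paper's proof is simply a one-line version of what you have spelled out in detail.
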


\begin{proof}
  Fiber functors commute with $\otimes$. As \Cref{def:det-charpoly} only depends on the monoidal structure, the determinant and characteristic polynomial will commute with $\omega$. Moreover in $\Vect_K$, the usual determinant can be computed as in \Cref{def:det-charpoly}.
\end{proof}

The following proposition will be useful to describe simple objects in tannakian categories.
\begin{prop} \label{prop:simple-object-tannaka-cat}
  Let $\mathcal T$ be a $F$-linear tannakian category, where $F$ is a field of characteristic $0$, and $L/F$ be a Galois extension with Galois group $G$. Assume that the Tannaka group of $\mathcal T$ is an algebraic torus $T$, and that $T$ is split by $L$. Then there is a bijection
  \[
    \begin{array}{rrcl} \label{eq:1}
    \{\text{Simple object of } \mathcal T \} &\cong &\mathrm{Orb}_G(X^\star(T_L)) \\
    M &\mapsto &\{\text{characters for the action of } T \text{ on } M\}.
    \end{array}
  \]
\end{prop}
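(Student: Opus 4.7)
The strategy is to neutralize the category by base change to $L$ --- where the torus becomes split and its representations are simply graded vector spaces --- and then recover the simple objects of $\mathcal T$ by Galois descent.

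First, consider the base-change category $\mathcal T_L := \mathcal T \otimes_F L$. Since the Tannaka group $T$ is an abelian group scheme of multiplicative type split by $L$, the $F$-gerbe of fiber functors of $\mathcal T$ acquires an $L$-point, yielding an $L$-linear fiber functor $\omega : \mathcal T_L \to \Vect_L$, and the tannakian reconstruction theorem gives an equivalence $\mathcal T_L \simeq \Rep_L(T_L)$. Because $T_L$ is a split torus, $\Rep_L(T_L)$ is in turn equivalent to the category of finite-dimensional $X^\star(T_L)$-graded $L$-vector spaces, and its simple objects are exactly the one-dimensional representations $L_\chi$ indexed by characters $\chi \in X^\star(T_L)$.

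Second, I descend along $L/F$. The Galois group $G = \Gal(L/F)$ acts $F$-linearly on $\mathcal T_L$ via its action on scalars, and the subcategory of $G$-invariants is $\mathcal T$. Because the torus $T$ is defined over $F$, the induced $G$-action on $\Rep_L(T_L)$ comes from the natural $G$-action on $T_L$, and in particular it permutes the simple objects $L_\chi$ according to $\sigma \cdot L_\chi \cong L_{\sigma \cdot \chi}$ for $\sigma \in G$ and $\chi \in X^\star(T_L)$.

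Third, assemble the bijection. A simple object $M \in \mathcal T$ becomes semisimple after base change, and the decomposition of $M \otimes_F L$ into $L_\chi$'s must be stable under $G$. Minimality of $M$ then forces $M \otimes_F L \cong \bigoplus_{\chi \in \mathcal O} L_\chi$ for a single orbit $\mathcal O \in \mathrm{Orb}_G(X^\star(T_L))$, each character appearing with multiplicity one; conversely, Galois descent produces from each such orbit a simple object of $\mathcal T$. The resulting bijection sends $M$ to the orbit of characters through which $T_L$ acts on $\omega(M \otimes_F L)$.

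The main subtlety is the descent step: one must verify that the $G$-action on $\mathcal T_L$ is tensor-compatible with $G$-invariants equal to $\mathcal T$, and that the corresponding action on $X^\star(T_L)$ is exactly the natural action coming from the $F$-structure of $T$. Once these identifications are in place, everything is formal, since the description of representations of a split torus as graded vector spaces is purely combinatorial and Galois descent turns $G$-orbits on characters into simple objects of $\mathcal T$.
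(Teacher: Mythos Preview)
Your overall strategy --- base-change to $L$, identify simple objects of $\mathcal T_L$ with characters, and use Galois descent to relate simple objects of $\mathcal T$ to $G$-orbits --- is exactly the paper's approach. However, your third paragraph contains a genuine error and your surjectivity step is too vague.

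The claim that ``each character appear[s] with multiplicity one'' in $M\otimes_F L$ is false in general. Simplicity of $M$ only forces the \emph{support} (the set of characters occurring) to be a single $G$-orbit; it does not control multiplicities. Indeed, if $\End_{\mathcal T}(M)$ is a noncommutative division algebra over $F$ (which can happen when $\mathcal T$ is non-neutral), then $\End(M_L)$ is a matrix algebra and $M_L$ is a direct sum of copies of each $L_\chi$ with multiplicity $>1$. The paper explicitly warns about this in the remark following the proposition, with the example of $\Isoc_{\bR}$: there is a simple object $V$ with $\End(V)\cong\mathbb H$, and $V_{\bC}$ is the square of a one-dimensional object. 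Fortunately this error is inessential for the bijection itself, since the map only records the set of characters; but you should drop the multiplicity-one assertion.

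More seriously, ``Galois descent produces from each such orbit a simple object of $\mathcal T$'' is not a proof of surjectivity. Given an orbit $\mathcal O$, the object $\bigoplus_{\chi\in\mathcal O} L_\chi$ need not carry a descent datum satisfying the cocycle condition (and even when it does, the descended object need not be simple). The paper handles this by instead forming the Weil restriction $N=\bigoplus_{g\in G} L_\chi^{\,g}$ for a fixed $\chi\in\mathcal O$, equipped with the evident shift-by-$\sigma$ descent datum; this descends to some $N'\in\mathcal T$, and one then takes any simple summand $M$ of $N'$. Since the characters of $M_L$ form a $G$-stable subset of $\mathcal O$ (hence all of $\mathcal O$), this $M$ maps to the given orbit. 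You should replace your one-line descent assertion with this construction.
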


\begin{remark} \label{rmk:quotient-sets-abelian-groups}
  In the previous proposition, it is tempting to write $X^\star(T_L)/G$ instead of $\mathrm{Orb}_G(X^\star(T_L))$, but one should be aware that the quotient is taken as sets, and not as abelian groups.
\end{remark}

\begin{proof}
  The category $\mathcal T_L$ has a split torus as Tannaka group. According to \cite[Chap 6, Prop 3.5.1]{saa72} there is a one-to-one correspondance between simple objects of $\mathcal T_L$ and $X^\star(T_L)$, sending an object $M$ to the unique character of $T$ acting non-trivially on $M$.

  First, let us prove that the map defined in the statement is well defined, that is that $G$ acts transitivelly on the characters of simple objects. Let $M$ be a simple object of $\mathcal T$, and let $S$ be the set of characters for the action of $T_L$ on $M$. Assume that we have a partition $S = S_1 \cup S_2$ which respects the action of $G$. Then $M_L = M_1 \oplus M_2$ where characters of $M_i$ are in $S_i$. By Galois descent of objects in Tannaka categories \cite[Chap 3, Prop. 1.2.3]{saa72}, we get a descent datum on $M_L$. Moreover, this descent datum stabilizes $M_1$ and $M_2$ (because there are no maps from $M_1$ to $M_2$ or from $M_2$ to $M_1$). Hence $M_1$ and $M_2$ come from objects of $\mathcal T$, and as $M$ is simple, either $S_1 = \emptyset$ or $S_2 = \emptyset$.

  Now, whenever one has two objects $M, N$ of $\mathcal T$ such that $\Hom(M, N) = 0$, then
  \[\Hom(M_L, N_L) = 0.\]
  It follows that the map \eqref{eq:1} is injective.

  For the surjectivity, we will again use Galois descent of objects in Tannaka categories. Consider $S$ an orbit for the action of $G$ on $X^\star(T_L)$. Fix $\chi \in S$, and consider
  \[ N = \bigoplus_{g \in G} L_\chi^g, \]
  where $L_\mu$ denotes the simple object of $\mathcal T_L$ associated to $\mu \in X^\star(\mathcal T)$. We endow $N$ with the descent datum
  \begin{align*}
    N^\sigma &\cong N \\
    (a_g)_{g \in G} &\mapsto (a_{\sigma^{-1} g})_{g \in G}
  \end{align*}
  under the identification of $(L_\chi^g)^\sigma$ with $L_\chi^{\sigma g}$. By Galois descent, there is an associated object $N'$ of $\mathcal T$ such that $N'_L \cong N$. In other words, $N'$ is the Weil restriction of scalars of $L_\chi$ along the extension $L/F$. Let $M$ be any simple component of $N'$, as $L_\chi^g \cong L_{g \chi}$, the set of characters for the action of $T_L$ on $M$ is $S$.
\end{proof}

\begin{remark}
  In the proof of the previous result, be aware that $M_L$ might not be isomorphic to $\bigoplus_{\mu \in S} L_{\mu}$. There might be some \textit{multiplicity} in the simple summands over $L$, for example this is the case for $\Isoc_\bR$ which we will describe now.
\end{remark}

\begin{definition} \label{def:real-isocrystals}
  We denote by $\Isoc_\bR$ the category whose objects are finite dimensional $\bC$-vector spaces $V$ endowed with
  \begin{enumerate}[label=(\roman*)]
  \item a decomposition $V = \bigoplus_{n \in \bZ} V_n$,
  \item a $\bC$-antilinear automorphism $\sigma: V \to V$,
  \end{enumerate}
  such that $\sigma$ respects the decomposition and for any $n$, $\sigma_{|V_n}^2 = (-1)^n \id_{V_n}$.

  Morphisms in $\Isoc_\bR$ are the $\bC$-linear maps which respect the structures. The $\Hom$ spaces are $\bR$-vector spaces, and we endow $\Isoc_{\bR}$ with the following monoidal structure:
  \[ (V \otimes W)_n = \bigoplus_{ a + b = n} V_a \otimes W_b. \]
  This makes $\Isoc_\bR$ into a $\bR$-linear tannakian category.
  
\end{definition}
\begin{remark}
  In $\Isoc_\bR$, the following object is simple:
  \[ V = \bC^2 = V_1, \]
  endowed with the automorphism $\sigma(z_1, z_2) = (-\overline{z_2}, \overline{z_1})$. Indeed, one can compute that $\End(V)$ is the quaternion algebra $\mathbb H$.

  But in the category $\Isoc_\bR \otimes_{\bR} \bC$, we have
  \[\End( V_\bC) = \End_{\Isoc_\bR}(V) \otimes \bC = \mathcal M_2(\bC).\]
  Hence there exist projectors exhibiting $V_\bC$ as the square of a simple object of $(\Isoc_\bR)_\bC$.
\end{remark}

\subsection{Motives and the standard conjecture of Hodge type} \label{sec:motiv-stand-conj}

The monoidal categories that we will be considering in this article are categories of motives. Informally, they are the categories you obtain when you transform the category of algebraic varieties into a monoidal category. For a precise discussion, see \cite{and04}. As explained in the conventions, we will be working with motives with respect to homological equivalence.

Let $\mathcal X$ be a smooth projective scheme of relative dimension $d$ over $S$ satisfying the Künneth standard conjecture, where $S$ is either a field or the ring of integers $\mathcal O_K$ of a $p$-adic field $K$. Let $\mathcal L$ be a relatively ample line bundle on $\mathcal X$.

\begin{definition}
  We say that $\mathcal X$ satisfies the relative standard conjecture of Lefschetz type if the Lefschetz operator induces an isomorphism for all $0 \leqslant n \leqslant d$
  \[\cup [\mathcal L]^{d-n}:\h^n(\mathcal X) \ra \h^{2d-n}(\mathcal X)(d-n). \]
  In any case, the $n$-th motivic primitive part is the kernel $\frk P^n_{\mathcal L}(X)$ of
  \[ \cup [\mathcal L]^{d-n+1}: \h^n(\mathcal X) \to \h^{2d-n+2}(\mathcal X)(d-n+1). \]
  The motivic intersection form $\eta_{\mathcal L, \mot}$ is the composite of the following maps
  \begin{align*}
    &\frk P^n_{\mathcal L}(\mathcal X) \otimes \frk P^n_{\mathcal L}(\mathcal X) \rightarrow \h^n(\mathcal X) \otimes \h^n(\mathcal X), \\
    &\h^n(\mathcal X) \otimes \h^n(\mathcal X) \cong \h^{n}(\mathcal X) \otimes \h^{2d-n}(\mathcal X)(d-n), \\
    &\h^{n}(\mathcal X) \otimes \h^{2d-n}(\mathcal X)(d-n) \ra \bUn(-n).
  \end{align*}
\end{definition}

\begin{prop}
  Assume that $\mathcal X$ satisfies the relative standard conjecture of Lefschetz type. Then for all $0 \leqslant n \leqslant d$, the map
  \[
    \begin{array}{rrcl}
      \bigoplus \cup [\mathcal L]^i: \bigoplus_{0 \leqslant i \leqslant \frac{n}{2}} \frk P^{n-2i}_{\mathcal L}(\mathcal X)(-i) & \ra & \h^n(\mathcal X)
    \end{array}
  \]
  is an isomorphism.
\end{prop}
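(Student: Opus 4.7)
The plan is to argue by induction on $n$. The base cases $n = 0$ and $n = 1$ are immediate: the target $\h^{2d-n+2}(\mathcal X)(d-n+1)$ of the Lefschetz power defining $\frk P^n_{\mathcal L}(\mathcal X)$ vanishes for dimension reasons, so $\frk P^n_{\mathcal L}(\mathcal X) = \h^n(\mathcal X)$, and the sum in the statement has a single term equal to $\h^n(\mathcal X)$.

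For the inductive step, the key reduction is to establish the motivic splitting
\[
\h^n(\mathcal X) \;\cong\; \frk P^n_{\mathcal L}(\mathcal X) \,\oplus\, \h^{n-2}(\mathcal X)(-1),
\]
where the second summand maps into $\h^n(\mathcal X)$ via the Lefschetz operator $L := \cup[\mathcal L]$. Since $n \leqslant d$ implies $n - 2 \leqslant d$, the relative Lefschetz standard conjecture, applied in cohomological degree $n-2$ and twisted once, yields an isomorphism
\[
L^{d-n+2}: \h^{n-2}(\mathcal X)(-1) \xrightarrow{\;\sim\;} \h^{2d-n+2}(\mathcal X)(d-n+1),
\]
whose motivic inverse I denote $\Lambda$. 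Set $\pi := L \circ \Lambda \circ L^{d-n+1}$ as an endomorphism of $\h^n(\mathcal X)$. The identity $\Lambda \circ L^{d-n+2} = \id$ forces $L^{d-n+1}\circ \pi = L^{d-n+1}$, whence $\pi^2 = \pi$ and $L^{d-n+1} \circ (\id - \pi) = 0$, so $\id - \pi$ factors through $\frk P^n_{\mathcal L}(\mathcal X)$. Conversely, morphisms into $\frk P^n_{\mathcal L}(\mathcal X)$ are killed by $\pi$, so $\pi$ and $\id - \pi$ realize the desired direct sum decomposition. Moreover $\pi$ factors as $L \circ (\Lambda \circ L^{d-n+1})$, and since $(\Lambda \circ L^{d-n+1}) \circ L = \Lambda \circ L^{d-n+2} = \id$ on $\h^{n-2}(\mathcal X)(-1)$, this identifies the image of $\pi$ with $\h^{n-2}(\mathcal X)(-1)$.

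Applying the induction hypothesis to $\h^{n-2}(\mathcal X)$ gives the isomorphism $\bigoplus_{0 \leqslant i \leqslant (n-2)/2} \frk P^{n-2-2i}_{\mathcal L}(\mathcal X)(-i) \xrightarrow{\sim} \h^{n-2}(\mathcal X)$, with the summand of index $i$ landing via $L^i$. Twisting by $(-1)$ and composing with $L: \h^{n-2}(\mathcal X)(-1) \to \h^n(\mathcal X)$, then reindexing $j = i + 1$, yields an isomorphism from $\bigoplus_{1 \leqslant j \leqslant n/2} \frk P^{n-2j}_{\mathcal L}(\mathcal X)(-j)$ onto the summand $L(\h^{n-2}(\mathcal X)(-1))$, given on the $j$-th factor by multiplication by $L^j$. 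Combining this with the inclusion of $\frk P^n_{\mathcal L}(\mathcal X)$ (the $j = 0$ term) recovers the map in the statement and completes the induction.

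The core of the argument is the classical proof of the primitive decomposition from Hard Lefschetz, the only subtlety being to perform all the manipulations of projectors and inverses as honest morphisms in the motivic category; this is guaranteed by the relative Lefschetz standard conjecture producing $\Lambda$ as a genuine motivic morphism. Careful bookkeeping of Tate twists when iterating $L$ and reindexing the induction will be the most error-prone part of the write-up, but presents no conceptual difficulty.
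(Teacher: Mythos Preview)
Your argument is correct and is the standard induction proving the primitive (Lefschetz) decomposition from hard Lefschetz, carried out in the motivic category; the only point requiring care is that the inverse $\Lambda$ of $L^{d-n+2}$ exists as an honest morphism of motives, and this is exactly what the relative Lefschetz standard conjecture supplies. The paper states this proposition without proof, treating it as a well-known formal consequence of the Lefschetz isomorphisms, so there is no proof to compare against; your write-up would serve perfectly well as the omitted argument.
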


\begin{remark}
  The functor $\Rz$ from \Cref{conv:motives} is lax--monoidal, so that whenever $\eta$ is a quadratic form on $M$, $\Rz(\eta)$ is a quadratic form on $\Rz(M)$.
\end{remark}

\begin{prop} \label{prop:cyc-mot-comp}
  Let $S=\Spec(k)$, $X$ a smooth projective variety over $k$, $\mathcal L$ is an ample divisor on $X$, $\mathcal P^n_{\mathcal L}(X)$ and $\eta_{\mathcal L}$ as in \Cref{def:primitive-cycles} and \Cref{conj:SCHT}. Then we have natural identifications
  \begin{align*}
    \Rz(\frk P^{2n}_{\mathcal L}(X)(n)) &= \mathcal P_{\mathcal L}^n(X) \\
    \Rz(\eta_{\mathcal L, \mot}(n)) &= \eta_{\mathcal L}.
  \end{align*}
\end{prop}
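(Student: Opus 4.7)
The identification is obtained by tracking the construction of $\frk P^{2n}_{\mathcal L}(X)(n)$ and $\eta_{\mathcal L, \mot}$ under the functor $\Rz$ and comparing with the cycle-theoretic definitions. First I observe that $\Hom_{\Mot(k)}(\bUn, \h^{2n}(X)(n)) = \mathcal Z^n_\eqhom(X)$ by the very construction of homological motives. Moreover, under Poincaré duality, morphisms $\h^{2n}(X)(n) \to \bUn$ correspond to classes in $\mathcal Z^{d-n}_\eqhom(X)$, and the composition $g \circ f$ computes the intersection pairing of the two corresponding cycles. The numerical equivalence appearing in the definition of $\Rz$ (which quotients the left kernel of this composition pairing) therefore matches numerical equivalence of cycles, giving $\Rz(\h^{2n}(X)(n)) = \mathcal Z^n_\num(X)$; the same applies to $\h^{2d-2n+2}(X)(d-n+1) = \mathcal Z^{d-n+1}_\num(X)$.

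Second, the motivic Lefschetz operator $\cup [\mathcal L]^{d-2n+1}$ is induced by the correspondence $\mathcal L^{d-2n+1}$ supported on the diagonal of $X \times X$, so under the identifications above, its image under $\Rz$ is precisely the cycle-level intersection map $\alpha \mapsto \mathcal L^{d-2n+1} \cdot \alpha$.

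Third, for the primitive part, I would use that $\Rz$ factors through the semisimple abelian category $\Mot_\num(k)$ of motives modulo numerical equivalence (Jannsen): indeed $\Rz(M) = \Hom_{\Mot_\num(k)}(\bUn, M)$, and the latter is exact in $M$ by semisimplicity. Applying this to the defining exact sequence
\[ 0 \to \frk P^{2n}(X)(n) \to \h^{2n}(X)(n) \xrightarrow{\cup[\mathcal L]^{d-2n+1}} \h^{2d-2n+2}(X)(d-n+1) \]
(valid after passage to $\Mot_\num(k)$) and combining with the first two steps produces the exact sequence
\[ 0 \to \Rz(\frk P^{2n}(X)(n)) \to \mathcal Z^n_\num(X) \xrightarrow{\cdot \mathcal L^{d-2n+1}} \mathcal Z^{d-n+1}_\num(X), \]
identifying $\Rz(\frk P^{2n}(X)(n))$ with $\mathcal P^n_\mathcal L(X)$ as required.

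Finally, $\eta_{\mathcal L, \mot}$ is defined as the composite of three motivic morphisms: the inclusion of the primitive part into $\h^{2n}(X)(n) \otimes \h^{2n}(X)(n)$, the Lefschetz isomorphism on the second factor, and the Poincaré pairing into $\bUn$. Since $\Rz$ is lax-monoidal (as noted in the remark preceding the statement), it preserves tensor products and compositions, and each of the three constituent maps has already been identified cycle-theoretically in the previous steps: inclusion becomes inclusion of primitive cycles, the Lefschetz isomorphism becomes intersection with $\mathcal L^{d-2n}$, and the Poincaré pairing becomes the degree map. Their composite is exactly $\eta_\mathcal L$, giving $\Rz(\eta_{\mathcal L, \mot}(n)) = \eta_\mathcal L$. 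The main subtlety lies in the third step, where one must justify that $\Rz$ commutes with the kernel defining the primitive motive; this is precisely what the factorization through Jannsen's semisimple numerical category cleanly resolves.
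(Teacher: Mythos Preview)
The paper states this proposition without proof, treating it as a routine unwinding of definitions, so there is nothing to compare against; your outline supplies exactly the expected argument and is essentially correct.

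One point deserves a closer look. In your third step you write that the defining kernel sequence is ``valid after passage to $\Mot_\num(k)$'' and then invoke exactness of $\Hom_{\Mot_\num}(\bUn,-)$. But the quotient functor $\pi:\Mot_\eqhom\to\Mot_\num$ is not known to be left exact in general, so it is not automatic that $\pi(\frk P^{2n}(X))$ agrees with the kernel of the Lefschetz map computed inside $\Mot_\num$; Jannsen's semisimplicity alone does not give you this. The clean fix, and the one implicitly available in the paper's setting, is to observe that whenever $X$ satisfies the Lefschetz standard conjecture (in particular for abelian varieties, which is the only case used later), the primitive motive $\frk P^{2n}(X)$ is a \emph{direct summand} of $\h^{2n}(X)$ cut out by an algebraic projector. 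Direct summands are preserved by any additive functor, so $\pi(\frk P^{2n}(X))$ is then visibly the kernel in $\Mot_\num$, and your exactness argument goes through. With this remark your proof is complete. (You may also want to keep track of the sign $(-1)^n$ appearing in the cycle-theoretic $\eta_{\mathcal L}$ when matching it with the motivic form; the paper is informal on this point as well.)
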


The following would be a generalization of \Cref{thm:hdg-riem} in a motivic setting.

\begin{conj} \label{conj:mot-SCHT}
  There is a notion of polarization on motives (c.f. \Cref{rem:mot-SCHT})such that
  \[\eta_{\mathcal L, \mot}: \frk P^n_{\mathcal L}(X) \otimes \frk P^n_{\mathcal L}(X) \to \bUn(-n)\]
  is a polarization for each $X$ and each ample divisor $\mathcal L$.
\end{conj}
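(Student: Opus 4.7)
The plan is to read \Cref{conj:mot-SCHT} as a tannakian reformulation of \Cref{conj:SCHT} and therefore to approach it via the Saavedra--Deligne formalism of polarizations on a tannakian category. The ``notion of polarization on motives'' that should be inserted in \Cref{rem:mot-SCHT} is: a rule $\Pi$ assigning to each weight-$n$ motive $M$ a subset $\Pi(M)$ of non-degenerate (symmetric or alternating) pairings $M\otimes M\to\bUn(-n)$, closed under $\oplus$, $\otimes$, duals, and such that for any real fiber functor $\omega$ on $\Mot(k)$ the induced bilinear form on $\omega(M)$ is positive definite (in the appropriately graded sense). With this definition, \Cref{conj:mot-SCHT} becomes the statement that the collection $\{\eta_{\mathcal L,\mot}\}$ generates a polarization on the tannakian category $\Mot(k)$.

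The first step is formal: check that $\eta_{\mathcal L,\mot}$ is non-degenerate and has the correct symmetry. Non-degeneracy is equivalent to the relative standard conjecture of Lefschetz type for $X$, since the adjoint map $\frk P^n_{\mathcal L}(X)\to \underline{\Hom}(\frk P^n_{\mathcal L}(X),\bUn(-n))$ is built from the Hard Lefschetz isomorphism and Poincaré duality; the symmetry follows from the motivic Poincaré pairing. Next, I would extend $\eta_{\mathcal L,\mot}$ to arbitrary objects of $\Mot(k)$ by the standard tannakian constructions (direct sums, tensor products, duals, and twists), so that the verification reduces to the generating family of primitive parts of smooth projective varieties with an ample polarization.

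The heart of the matter is positivity. Thanks to \Cref{prop:cyc-mot-comp} we have the identifications
\[ \Rz(\frk P^{2n}_{\mathcal L}(X)(n)) = \mathcal P^n_{\mathcal L}(X),\qquad \Rz(\eta_{\mathcal L,\mot}(n)) = \eta_{\mathcal L},\]
so positivity of $\eta_{\mathcal L,\mot}$ at the fiber functor $\Rz$ is \emph{tautologically} the standard conjecture of Hodge type for $X$ and $\mathcal L$. In characteristic zero, positivity at the Hodge realization $R_{H,\iota}$ is \Cref{thm:hdg-riem}, and the transition between different real fiber functors on a fixed tannakian subcategory is controlled by the Tannaka group: two polarizations on the same category differ by a tensor automorphism, so a single check suffices. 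This matches the strategy of \Cref{sec:arch-fiber-funct}, which introduces a second real fiber functor $\Rinf$ on $\Lmot(\bF_q)$ precisely to transport positivity between realizations.

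The main obstacle is thus \Cref{conj:SCHT} itself, which is the whole point of the paper and is known only in limited cases. In other words, the formal package (non-degeneracy, symmetry, stability under tannakian operations, equivalence of positivity across real fiber functors) is in place, and \Cref{conj:mot-SCHT} is logically \emph{equivalent} to the collection of all instances of \Cref{conj:SCHT}. Consequently, a proof proposal for the general conjecture cannot avoid proving SCHT itself; what one can hope to do --- and what the remainder of the paper pursues via \Cref{thm:tann-main}, \Cref{cor:two-quad-forms}, and the decomposition of \Cref{sec:decomp-abel-motiv} --- is to upgrade the verification of positivity, one tannakian subcategory at a time, to cover broader classes such as powers of abelian threefolds.
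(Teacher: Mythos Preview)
The statement you were asked to ``prove'' is a \emph{conjecture}, and the paper does not prove it. What the paper offers is \Cref{rem:mot-SCHT}: the notion of polarization is that of Saavedra on tannakian categories, and Saavedra \cite[Chap.~6, \S4.4]{saa72} shows that \Cref{conj:mot-SCHT} follows from \Cref{conj:SCHT}. Nothing more is claimed.

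Your write-up correctly identifies this situation. You describe the Saavedra--Deligne polarization formalism, observe that positivity of $\eta_{\mathcal L,\mot}$ at $\Rz$ is literally \Cref{conj:SCHT} via \Cref{prop:cyc-mot-comp}, and conclude that \Cref{conj:mot-SCHT} is equivalent to the totality of instances of \Cref{conj:SCHT}, hence unprovable in general without proving SCHT. This is exactly the content of \Cref{rem:mot-SCHT}, so your understanding matches the paper's. Your additional remarks about transporting positivity between real fiber functors and about the role of $\Rinf$ are accurate previews of what happens later in the paper, but they are not part of any argument for the general conjecture --- nor does the paper pretend otherwise.

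In short: there is no proof to compare against, and you have correctly diagnosed why.
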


Using \Cref{prop:cyc-mot-comp}, \Cref{conj:SCHT} would follow from \Cref{conj:mot-SCHT} in the same way as it follows from \Cref{thm:hdg-riem} over $\bC$.

\begin{remark} \label{rem:mot-SCHT}
  The theory of polarizations on tannakian categories has been axiomatized in \cite{saa72}. In this language, \Cref{conj:mot-SCHT} can be formulated by saying that there exists a polarization on \textit{the category} of motives, such that the bilinear forms $\eta_{\mathcal L, \mot}$ are positive relatively to this polarization. In \cite[Chap.6 \S 4.4]{saa72}, Saavedra shows that \Cref{conj:mot-SCHT} follows from \Cref{conj:SCHT}.
\end{remark}

The main positivity result that we will use in the paper is the following one \cite[Theorem 3.2]{AM25}.

\begin{theorem}[Ancona-Marmora] \label{thm:ancona-marmora}
  Let $K$ be a $p$-adic field, with ring of integers $\mathcal O_K$, and fix an embedding $\sigma: K \to \bC$. Let $M$ be a motive in $\Mot(\mathcal O_K)$ pure of weight $2n$, with $\CM$ by $F$ and $\eta$ be a quadratic form on $M(n)$, with values in $\bUn$. Assume that
  \begin{enumerate}
  \item $\dim \Rz(M_{\frk p}(n)) = \dim M$,
  \item For every $z \in F$, the adjoint of $z$ with respect to $\eta$ is $\overline z$.
  \item $R_B^\sigma(\eta)$ is a polarization of the Hodge structure $R_B^\sigma(M)$,
    
  \end{enumerate}
  Then the quadratic form $\Rz(\eta_\frkp)$ on $\Rz(M_{\frk p}(n))$ has signature $(s_+, s_-)$ with $4 | s_-$.
\end{theorem}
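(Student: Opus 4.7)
The strategy is to exhibit on $V := \Rz(M_\frkp(n))$ a natural action of a totally definite quaternion $F^+$-algebra $B$ containing $F$, making $\Rz(\eta_\frkp)$ the trace form of a $B$-hermitian form on $V$; the divisibility $4 \mid s_-$ is then a formal consequence of quaternionic linear algebra. I would first use the $F$-action to decompose $M$ into isotypic pieces: hypothesis (2) makes $\eta$ block diagonal along this decomposition, so one reduces to the case where $F$ is a single CM field and $M$ is $F$-isotypic. Then $\eta = \mathrm{Tr}_{F/\bQ} \circ h$ for a unique $F$-hermitian form $h$ on $M(n)$, and hypothesis (3), combined with the CM structure and the rigidity of CM polarizations across archimedean places, says that $h$ is totally positive.

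\textbf{Quaternion structure at the special fibre.} This is where $p$-adic Hodge theory enters. Hypothesis (1) says $M_\frkp(n)$ carries as many independent algebraic classes as its rank; combined with the Tate conjecture for motives of abelian type over finite fields, this identifies $M_\frkp(n)$ with $\bUn^{\oplus \dim M}$ in $\Mot(k)$, so the crystalline realization of $M_\frkp$ is of Tate type after twisting. Combined with CM by $F$, this pins down the Newton slopes of $M_\frkp$ on each $F$-isotypic summand, forcing them to pair up symmetrically about the middle slope. This symmetry is a $p$-adic ``supersingularity'' and produces an endomorphism $J \in \End(M_\frkp)$ with $J^2 = -1$, anticommuting with $F$ in the CM-conjugate sense. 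The commutant of $F^+$ in $\End(M_\frkp)$ is then a quaternion algebra $B/F^+$ containing $F$; the positivity of $h$ at $\sigma$ propagated through the $p$-adic comparison forces the canonical involution of $B$ to be positive at every real place of $F^+$, so $B$ is totally definite.

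\textbf{Conclusion from quaternionic linear algebra.} The $B$-action descends to $V$ and extends the adjoint condition (2): every $b \in B$ has $\Rz(\eta_\frkp)$-adjoint equal to its canonical involute $b^*$. Hence $V$ is a right $B$-module and $\Rz(\eta_\frkp)$ is the trace down to $\bQ$ of a unique $B$-hermitian form $\tilde h$. Diagonalizing $\tilde h$ over $B$, it becomes an orthogonal sum of rank-one $B$-hermitian modules; since $B$ is totally definite, each such module is at every real place of $F^+$ equal to $\pm$ the reduced norm on $\mathbb{H}$, contributing a block of signature $(4,0)$ or $(0,4)$ to the underlying $\bR$-bilinear form. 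Consequently $(s_+, s_-)$ is of the shape $(4p, 4q)$, giving $4 \mid s_-$ (and in fact $4 \mid s_+$ as well).

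\textbf{Main obstacle.} The real difficulty is the construction of $B$ and the verification that it is totally definite, which requires a careful interplay between the CM-eigenspace decomposition of $M_\frkp$, the Hodge filtration on its crystalline realization, and the Betti polarization of hypothesis (3). The initial isotypic reduction and the concluding argument about quaternionic hermitian forms are, by comparison, essentially formal once this central structural input has been established.
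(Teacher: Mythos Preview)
The paper does not prove this theorem: it is stated as \cite[Theorem 3.2]{AM25} and used as a black box, with only the remark that ``the bridge between these setups is made by techniques of $p$-adic Hodge theory.'' There is therefore no proof in the present paper to compare your proposal against.

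As for the proposal itself, the overall shape---reduce to an $F$-isotypic piece, upgrade the $F$-action on the special fibre to a totally definite quaternion action, then read off $4\mid s_-$ from quaternionic hermitian linear algebra---is a reasonable heuristic for why such a divisibility should hold, and the concluding step is indeed formal once the quaternion structure is in hand. But the central step is not actually carried out. Your appeal to the Tate conjecture to identify $M_\frkp(n)$ with $\bUn^{\oplus \dim M}$ is problematic (the conjecture is not available in this generality, and in any case hypothesis (1) gives you many algebraic classes, not an isomorphism of motives), and the passage from ``slopes pair symmetrically'' to ``there exists $J$ with $J^2=-1$ anticommuting with $F$'' is asserted rather than proved. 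You correctly flag this as the main obstacle; what is missing is precisely the $p$-adic Hodge-theoretic argument (comparison of the Hodge filtration with the crystalline Frobenius, and how the Betti polarization constrains both) that the cited paper of Ancona--Marmora supplies. Without that input, the construction of the quaternion algebra $B$ and the verification that it is totally definite remain conjectural, so the proposal is a plausible outline rather than a proof.
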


\begin{remark}
  The conclusion of the theorem is about algebraic cycles on the special fibre of the motive, that is algebraic cycles in characteristic $p$. But the assumptions are Hodge theoretic, they are about the cohomology of a complex variety. The bridge between these setups is made by techniques of $p$-adic Hodge theory.
\end{remark}

\begin{remark} \label{rem:cycle-eigen}
  Let $k$ be a finite field of characteristic $p$. Let $\ell \not = p$ be a prime number, and $\Mot(k)$ be the category of motives with respect to $\ell$-adic homological equivalence. The $\ell$-adic realization (see \Cref{conv:hdg-real}) is a faithful functor
  \[ R_\ell: \Mot(k)_{\Ql} \to \Rep_{\Ql}(\Gal(\overline k/k)). \]
  In particular, for any motive $M$, there is an injective map
  \[\Hom(\bUn, M) \otimes \Ql \hookrightarrow R_\ell(M)^{\Frob_k}. \]
  Hence, if $1$ is not an eigenvalue of Frobenius on $M$, then $\Hom(\bUn, M) = 0$. The converse is expected to be true, as it is a consequence of the following conjecture.
\end{remark}

\begin{conj}[Tate] \label{conj:tate}
  Let $X$ be a smooth projective variety over a finite field $k$. For all $n$, the cycle class map
  \[ \cl^n: \mathcal Z^n_\eqhom(X) \to H^{2n}_\ell(X)(n)^{\Frob_k}\]
  is surjective.
\end{conj}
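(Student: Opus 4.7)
The statement is Tate's conjecture in arbitrary codimension, an open problem in algebraic geometry; any ``proof plan'' is necessarily programmatic rather than conclusive. Before attempting anything, I would record the structural facts that survive unconditionally. By Deligne's proof of the Weil conjectures, $\Frob_k$ acts on $H^{2n}_\ell(X)(n)$ with eigenvalues that are Weil numbers of weight $0$, so the target $H^{2n}_\ell(X)(n)^{\Frob_k}$ is finite-dimensional over $\Ql$. The cycle class of an algebraic cycle is always Frobenius-invariant, so $\cl^n$ does land in this subspace, and the question reduces to the dimension equality
\[\dim_{\bQ} \mathcal Z^n_\eqhom(X)\otimes\Ql = \dim_{\Ql} H^{2n}_\ell(X)(n)^{\Frob_k}.\]

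The most promising general strategy I would pursue is to reduce the conjecture for $(X,n)$ to a case in which it is already known. The classical reduction is by correspondences: if $\Gamma \in \CH^*(X\times Y)_{\bQ}$ induces a surjection of Galois representations $H^{2n}_\ell(X)(n)\twoheadrightarrow H^{2m}_\ell(Y)(m)$ that is split by a second correspondence, then Tate for $(Y,m)$ implies Tate for $(X,n)$. For $Y$ an abelian variety and $m=1$, Tate's own theorem applies; this is why the conjecture is known for varieties whose motive lies in the tannakian subcategory generated by abelian varieties, and it is the mechanism underlying several cases used elsewhere in this paper. A complementary approach would invoke hard and weak Lefschetz to reduce $H^{2n}(X)$ to $H^2$ of iterated smooth hyperplane sections, but this depends on the standard conjecture of Lefschetz type, itself open in general.

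The main obstacle, and the reason the conjecture remains open, is that the cycle class map is defined by producing a cohomology class from a cycle, and no general procedure is known to invert it: given a Frobenius-fixed class in $H^{2n}_\ell(X)(n)$, there is no functorial recipe to manufacture an algebraic cycle representing it. Every known case---divisors on abelian varieties (Tate), K3 surfaces (Nygaard--Ogus, Charles, Madapusi-Pera, Maulik), certain Shimura varieties, and products of known cases---relies on an ad hoc construction, typically a moduli interpretation, a Kuga--Satake-type correspondence, or a lift to characteristic zero combined with the Hodge conjecture in a controlled setting. In the spirit of the present paper, one might try to produce for each Frobenius-fixed class a motive $M$ with a map $\bUn \to M$ whose $\ell$-adic realization hits the class; but the existence of such an $M$ is exactly the Tate conjecture restated for motives, so this rephrasing yields no unconditional handle. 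Any serious new attack would thus need either a genuinely new mechanism of cycle production or a drastic reformulation of the problem.
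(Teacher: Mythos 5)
You have correctly recognized that \Cref{conj:tate} is Tate's conjecture, an open problem, and is stated in the paper as a \emph{conjecture} with no proof attached; the paper uses it only as background motivation (c.f.\ \Cref{rem:cycle-eigen}) and never claims to establish it. There is therefore no proof in the paper to compare against, and your overview of the unconditional reductions (Deligne's purity, Frobenius-invariance of cycle classes, reduction to a dimension count), of the correspondence mechanism for transporting known cases, and of the fundamental obstruction (no general machine for producing cycles from cohomology classes) is accurate and appropriately labeled as programmatic rather than conclusive. The only thing worth adding is that, were you asked to situate this conjecture within the paper's argument, the relevant unconditional fragment is the easy direction recorded in \Cref{rem:cycle-eigen}: the $\ell$-adic realization is faithful, so $\Hom(\bUn,M)\otimes\Ql$ injects into $R_\ell(M)^{\Frob_k}$; the Tate conjecture would give the converse, namely that the absence of Frobenius eigenvalue $1$ is \emph{equivalent} to the vanishing of $\Hom(\bUn,M)$, rather than merely implying it.
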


\section{Enriched eigenvalues and Lefschetz motives} \label{sec:abel-vari-abel}

In this section, we define a new invariant of abelian varieties, which we call the group of enriched eigenvalues. Then, we use this invariant to give an explicit description of simple Lefschetz motives in \Cref{prop:lefmot-classification}. Then we describe the dimension of a Lefschetz motive in terms of its associated enriched eigenvalues in \Cref{prop:motive-rank}. Finally, we finish the section with two lemmas which are used in \Cref{sec:appl-posit-inters}. First, let us recall the standard definitions.
\subsection{Enriched eigenvalues of abelian varieties} \label{sec:enrich-eigenv-abel}
Let $p$ be a prime number. Let $A$ be an abelian variety of dimension $g$ over a finite field $\bF_q$ of characteristic $p$. Consider
\[\End^0(A)=\End(A) \otimes_{\bZ} \bQ\]
and $F$ the center of $\End^0(A)$. The geometric Frobenius $\pi$ of $A$ is a central endomorphism of $A$. Moreover by \cite[Theorem 2]{tat66}, $F= \bQ(\pi)$.

\begin{definition}
  When $A$ is simple, the multiplicity of $A$ is the integer $m$ such that
  \[m^2=\dim_F\End^0(A). \]
\end{definition}

\begin{remark}
  By \cite[Theorem 1]{tat66}, $m$ is also the multiplicity of the roots of the characteristic polynomial of $\pi$, that is it satisfies $2g = m [F:\bQ]$.
\end{remark}

Let us go back to the case where $A$ is not necessarily simple. Here $F$ is a semisimple commutative algebra, finite over $\bQ$, hence it is a product of number fields. Let $\tilde F$ denote a Galois closure of a compositum of the factors of $F$, and $G = \Gal(\tilde F/\bQ)$.
The following notion of rank appears in \cite{LZ93}, \cite{zar94}, and is useful to study Tate classes in powers of abelian varieties.

\begin{definition}
  Denote by $\pi_1, \dots, \pi_d$ the images of $\pi$ in $\tilde F$ along maps $F \to \tilde F$. Consider the subgroup $\Gamma$ of $\tilde F^\times$ generated by $\pi_1, \dots, \pi_d$. The Frobenius rank of $A$ is
  \[ \rk(A) := \rk (\Gamma) -1 = \rk(\Gamma/\langle p \rangle).\]
\end{definition}

\begin{remark}
  If $A$ is simple with multiplicity $m$, then $d = \frac{2g}{m}$, and as $\pi_i  \overline \pi_i = q = 1$ in $\tilde F^\times/\langle p \rangle$, we have
  \[ 0 \leqslant \rk(A) \leqslant \frac{g}{m}.\]
\end{remark}

\begin{notation} \label{not:ab-var}
  Let $A$ be an abelian variety over $\bF_q$. We will denote by $F$ the center of $\End^0(A)$ ($F$ is a product of number fields), by $\tilde F$ a Galois closure of a compositum of the factors of $F$, and $r$ the Frobenius rank of $A$. Moreover, when $A$ is simple, we will denote by $m$ the multiplicity of $A$, which is the multiplicity of the roots of the characteristic polynomial of $A$.
\end{notation}

The following definition introduces the new invariant of this section.

\begin{definition} \label{def:enriched-eigenvalue-group}
  Let $A$ be an abelian variety over $\bF_q$, with Frobenius endomorphism $\pi \in F$, and let
  \[\Pi = \{ \sigma(\pi) \mid \sigma: F \to \tilde F \}.\]
  The group $\Eig(A)$ of enriched eigenvalues of $A$ is the free abelian group generated by $\Pi \cup q$, denoted multiplicatively, modulo the relations $[\pi][\overline \pi]=[q]$ with $\pi \in \Pi$.
\end{definition}

\begin{definition} \label{def:realization-eigenvalues}
  The realization map is the map
  \[\rho:\Eig(A) \to \tilde F^\times \]
  induced by the inclusion $\Pi \cup q \subset \tilde F^\times$.
\end{definition}

\begin{remark} \label{rmk:mult-relation-tate}
  By \cite[Theorem 3.4.3]{zar94}, if $\rho$ is injective, then all Tate classes in the cohomology of powers of $A$ are Lefschetz classes. So that $\Ker(\rho)$ measures the defect of Tate classes to be Lefschetz classes.

  We will call $\Ker(\rho)$ the group of nontrivial multiplicative relations between $\pi_1, \dots, \pi_d$, it is a free abelian group of rank $\frac{g}{m} - \rk(A)$.
\end{remark}

\subsection{Simple Lefschetz motives}

In this article, algebraic cycles come in two flavors:
\begin{enumerate}[label=(\roman*)]
\item the cycles for which we have to prove positivity in \Cref{conj:SCHT}
\item the cycles that we use to manipulate and decompose motives.
\end{enumerate}
The point of using Lefschetz motives is to have a formalism that distinguishes between these two kinds of algebraic cycles. For more precisions, see \ref{conv:lef-mot}.

\begin{remark}
  As homological equivalence coincides with numerical equivalence for Lefschetz cycles \cite[Prop 5.2]{mil99a}, we have the usual realization functors which are well defined on Lefschetz motives. Hence, it makes the category of Lefschetz motive into a tannakian category. Moreover by \cite{jan92}, the category of Lefschetz motive is semisimple.
\end{remark}

\begin{prop} \label{prop:lefgroupchar}
  Let $A$ be an abelian variety over $\bF_q$, whose geometric endomorphisms are defined over $\bF_q$, and $L_A$ be the Tannaka group of $\langle A \rangle^\otimes$. Then $L_A$ is an algebraic tori split by $\tilde F$ (c.f. \Cref{not:ab-var}), and there is a natural Galois-equivariant isomorphism of abelian groups
  \[ X^\star(L_{A,\tilde F}) \cong \Eig(A).\]
\end{prop}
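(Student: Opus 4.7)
The strategy is to identify $L_A$ with the Lefschetz group of $A$, split it over $\tilde F$ via the $\End^0(A)$-action on $\frk h^1(A)$, and match its character lattice with $\Eig(A)$. Since all geometric endomorphisms of $A$ are defined over $\bF_q$, one has $\End_{\Lmot(\bF_q)}(\frk h^1(A)) = \End^0(A)$, a semisimple $\bQ$-algebra with center $F$; by Tannaka duality this algebra is the commutant of $L_A$ in $\End(\omega(\frk h^1(A)))$ for any fibre functor $\omega$, so $L_A$ is commutative. After base change to $\tilde F$, the algebra $\End^0(A) \otimes \tilde F$ splits as $\prod_\sigma M_m(\tilde F)$ indexed by the embeddings $\sigma$ of the factors of $F$ into $\tilde F$, giving a decomposition $\frk h^1(A)_{\tilde F} = \bigoplus_\sigma W_\sigma$ with each $W_\sigma$ of $\tilde F$-dimension $m$ (and Frobenius acting on $W_\sigma$ by the scalar $\sigma(\pi)$). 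Since $L_{A,\tilde F}$ commutes with this algebra it acts on each $W_\sigma$ by a scalar $\chi_\sigma \in \tilde F^\times$, giving a closed embedding $L_{A,\tilde F}\hookrightarrow \prod_\sigma \tilde F^\times$, faithful because $\frk h^1(A)$ generates $\langle A\rangle^\otimes$. This already shows $L_A$ is a torus split by $\tilde F$.

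Write $\chi_q$ for the character of $L_A$ on $\bUn(-1) \in \langle A\rangle^\otimes$, which is recovered from the Weil pairing. The map $\Eig(A)\to X^\star(L_{A,\tilde F})$ sending $[\sigma(\pi)]\mapsto \chi_\sigma$ and $[q]\mapsto \chi_q$ is well defined: the relation $\chi_\sigma\chi_{\bar\sigma} = \chi_q$ follows from Rosati-compatibility of $\psi: \frk h^1(A)\otimes\frk h^1(A)\to \bUn(-1)$, which forces $\psi$ to pair $W_\sigma$ nondegenerately with $W_{\bar\sigma}$, where $\bar\sigma(\pi) = q/\sigma(\pi) = \overline{\sigma(\pi)}$. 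The map is surjective because every object of $\langle A\rangle^\otimes$ is a subquotient of tensor powers of $\frk h^1(A)$ and $\frk h^1(A)^\vee \cong \frk h^1(A)(1)$, so after base change its characters lie in the subgroup generated by the $\chi_\sigma$. Galois equivariance is transparent: $\tau\in\Gal(\tilde F/\bQ)$ sends $W_\sigma$ to $W_{\tau\sigma}$, so $\tau\cdot\chi_\sigma = \chi_{\tau\sigma}$, matching the natural action $\tau\cdot[\sigma(\pi)] = [\tau\sigma(\pi)]$ on $\Eig(A)$.

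The hardest step is injectivity. I would prove it by a rank computation: $\Eig(A)$ is presented by the free abelian group on $\Pi\cup\{q\}$ modulo the $|\Pi|/2$ relations $[\pi][\bar\pi]=[q]$, yielding a torsion-free group of rank $|\Pi|/2+1 = g/m+1$ (in the generic non-supersingular situation; the remaining cases are handled similarly). On the other hand, I would identify the image of $L_{A,\tilde F}$ in $\prod_\sigma \tilde F^\times$ as the Lefschetz subtorus $\{(t_\sigma):\ t_\sigma t_{\bar\sigma} \text{ is independent of } \sigma\}$, which has the same dimension. The inclusion of $L_{A,\tilde F}$ in this subtorus is automatic from the symplectic compatibility via $\psi$; the reverse inclusion, which is the essential content, rests on Milne's theorem that enough Lefschetz cycles exist on powers of $A$ to cut out the full centralizer of $\End^0(A)$ inside $\mathrm{GSp}(\frk h^1(A),\psi)$. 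Given matching dimensions and a surjection onto the torsion-free group $X^\star(L_{A,\tilde F})$, injectivity follows.
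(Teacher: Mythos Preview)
Your approach is essentially correct and rests on the same external input as the paper's, namely Milne's identification of $L_A$ with the Lefschetz group of $A$. The difference is in how that input is deployed. The paper quotes directly Milne's explicit formula
\[
X^\star(L_{A,\tilde F})\;=\;\{f:\Pi\to\bZ\}\big/\{f\mid f=\iota f,\ \textstyle\sum_\pi f(\pi)=0\},
\]
defines the map $\Eig(A)\to X^\star(L_{A,\tilde F})$ on generators by $[\pi]\mapsto f_\pi$ and $[q]\mapsto f_\pi+f_{\bar\pi}$, and then proves injectivity by a short element-wise calculation: any $\lambda=\prod[\pi]^{a_\pi}$ in the kernel has $a_\pi=a_{\bar\pi}$ and $\sum a_\pi=0$, whence $\lambda=\prod_{\{\pi,\bar\pi\}}([\pi][\bar\pi])^{a_\pi}=[q]^{\frac12\sum a_\pi}=1$. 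This avoids all rank and dimension bookkeeping and treats every case uniformly.

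Your route instead rebuilds the Lefschetz group from the $\End^0(A)$-module structure on $\frk h^1(A)$, then argues injectivity by comparing the rank of $\Eig(A)$ with the dimension of the Lefschetz torus. That works, but it is more circuitous: you still invoke Milne at the key step (to know the image in $\prod_\sigma \bG_m$ is the full $\{(t_\sigma):t_\sigma t_{\bar\sigma}\ \text{constant}\}$), so nothing is gained by postponing the citation, and you must separately verify torsion-freeness of $\Eig(A)$ and compute two ranks. Two points deserve tightening. First, the sentence ``This already shows $L_A$ is a torus split by $\tilde F$'' overstates what the closed embedding $L_{A,\tilde F}\hookrightarrow\prod_\sigma\bG_m$ gives: it yields diagonalizability, not connectedness, and connectedness is exactly what you later use when you call $X^\star(L_{A,\tilde F})$ torsion-free. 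Second, ``the remaining cases are handled similarly'' is where the real-eigenvalue edge cases live; the paper's direct computation dispatches these without a separate case analysis. If you want to keep your structure, I would still replace the rank argument for injectivity by the paper's two-line calculation once Milne's formula is in hand.
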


\begin{proof}
  Let $A$ be an abelian variety over $\bF_q$, with Tannaka group $L_A$. Assume that the geometric endomorphisms of $A$ are defined over $\bF_q$. Then the Lefschetz group of $A$ is the same as the Lefschetz group of $A_{\Fpbar}$.

  Let $\Pi$ be the set of eigenvalues of $A$ in a spliting field $\tilde F$ of the characteristic polynomial. By \cite[Lemma 4.2]{mil99}, we have an identification
  \[X^*(L_{A, \tilde F}) = \{f: \Pi \to \bZ \}/ \{f \: | \; f = \iota f \text{ and } \sum_{\pi \in \Pi} f(\pi) = 0 \},\]
  where $\iota$ denote complex conjugation. In particular, $L_A$ is split by $\tilde L$.
  Consider the Galois equivariant map 
  \[\Pi \cup \{q\} \to X^\star(L_{A, \tilde F}) \]
  mapping $\pi \in \Pi$ to the characteristic function $f_\pi$ of $\pi$ and $q$ to $f_\pi + f_{\overline \pi}$ for any $\pi$ (in the quotient, this is independant of $\pi \in \Pi$).
  It induces a unique map,
  \[ \Eig(A) \to X^\star(L_{A, \tilde F}), \]
  by the universal propery of the free group with relations. This map is Galois equivariant, and surjective because any function $f: \Pi \to \bZ$ can be written as a sum of characteristic functions of elements. For injectivity, let
  \[ \lambda = [q]^a \cdot \prod_{\pi \in \Pi} [\pi]^{a_\pi}\]
  be in the kernel. By replacing $[q]$ by $[\pi_0] [\overline{\pi_0}]$, we can assume that $a=0$. Then, the map
  \[ f: \pi \mapsto a_\pi \]
  satisfies $f = \iota f$ and $\sum_{\pi \in \Pi} f(\pi) = 0$. In particular, for all $\pi$, $a_\pi = a_{\overline \pi}$, so that we can write
  \begin{align*}
    \lambda &= \prod_{\{\pi, \overline \pi\} \in \Pi/\iota} ([\pi][\overline \pi])^{a_\pi} \\
            &= \prod_{\{\pi, \overline \pi\} \in \Pi/\iota} [q]^{a_{\pi}}, \\
            &= [q]^{\frac{1}{2}\sum_{\pi \in \Pi} a_{\pi}} \\
            &= 1.
  \end{align*}
\end{proof}

\begin{definition} \label{def:enriched-eigenvalues}
  Let $A$ be an abelian variety over $\bF_q$, whose geometric endomorphisms are defined over $\bF_q$, and $M$ be a Lefschetz motive of rank $r$ in $\langle A \rangle^\otimes$. Let $\chi_1, \dots, \chi_r$ be the characters for the action of $L_A$ on $M$. We will call enriched eigenvalue of $M$, and denote $\lambda_1, \dots, \lambda_r$ the elements of $\Eig(A)$ corresponding to $\chi_1, \dots, \chi_r$ via the identification of \Cref{prop:lefgroupchar}.
\end{definition}

\begin{prop} \label{prop:realization-enriched-classical}
  Let $A$ be an abelian variety over $\bF_q$, whose endomorphisms are defined over $\bF_q$, and $M \in \langle A \rangle^\otimes$ be a Lefschetz motive with enriched eigenvalues $\lambda_1, \dots, \lambda_r \in \Eig(A)$. Then the action of Frobenius on $R_\ell(M)$ has eigenvalues $\rho(\lambda_1), \dots, \rho(\lambda_r)$ (where $\rho$ is defined in \Cref{def:realization-eigenvalues}).
\end{prop}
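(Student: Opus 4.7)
My plan is to translate the statement into the tannakian formalism and reduce it to a direct verification on $\frk h^1(A)$. First, composing $R_\ell$ with the forgetful functor $\Rep_\Ql(\Gal(\overline{\bF_q}/\bF_q)) \to \Vect_\Ql$ gives a $\Ql$-valued fiber functor on $\langle A \rangle^\otimes$. Since $L_A$ is an algebraic torus by Proposition \ref{prop:lefgroupchar}, it is in particular abelian, so by the remark following the main tannakian theorem the Tannaka group of this fiber functor is canonically identified with $L_{A,\Ql}$. The geometric Frobenius $\Frob_q$ acts functorially on $R_\ell(M)$ for every $M \in \langle A \rangle^\otimes$ and compatibly with tensor products, so it defines a tensor automorphism of the fiber functor, namely an element $\phi \in L_A(\Ql)$.

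Next, I would fix an embedding $\iota: \tilde F \hookrightarrow \Qlbar$, along which $L_{A,\Qlbar}$ becomes split. For every $M$, the eigenvalues of $\phi$ on $R_\ell(M) \otimes_\Ql \Qlbar$ are then the values $\chi(\phi)$, with $\chi$ running over the characters of $L_{A,\Qlbar}$ appearing in $M$ (counted with multiplicity). Via the isomorphism $\Eig(A) \cong X^\star(L_{A,\tilde F})$ of Proposition \ref{prop:lefgroupchar}, the proposition thus reduces to the identity $\chi_\lambda(\phi) = \iota(\rho(\lambda))$ for every $\lambda \in \Eig(A)$.

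Both sides are group homomorphisms $\Eig(A) \to \Qlbar^\times$, so it suffices to verify the identity on the generators $[\pi]$ (for $\pi \in \Pi$) and $[q]$. For $\lambda = [\pi]$, the construction in the proof of Proposition \ref{prop:lefgroupchar} makes $f_\pi$ the character of $L_A$ whose isotypic summand in $\frk h^1(A) \otimes_\Ql \Qlbar$ is the $\iota(\pi)$-eigenspace of $\Frob_q$, so that $f_\pi(\phi) = \iota(\pi) = \iota(\rho([\pi]))$. For $\lambda = [q]$ one uses the relation $[q] = [\pi][\overline \pi]$ in $\Eig(A)$ together with $\pi \overline \pi = q$ in $\tilde F$. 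The main conceptual step is the first one—extracting a canonical element of $L_A(\Ql)$ from the Galois action on $R_\ell$—which hinges on the abelian nature of $L_A$; everything that follows is bookkeeping of the identifications from Proposition \ref{prop:lefgroupchar}.
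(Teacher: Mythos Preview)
Your argument is correct and follows essentially the same strategy as the paper: both proofs reduce to a check on generators of $\Eig(A)$ via multiplicativity, and both verify the base case on $\frk h^1(A)$. The only difference is in how the Frobenius is identified with an element of $L_A$: you obtain $\phi \in L_A(\Ql)$ abstractly as a tensor automorphism of the $\ell$-adic fiber functor, while the paper uses Milne's explicit description $L_A(\bQ) = \{\alpha \in F \mid \alpha\overline\alpha \in \bQ^\times\}$ (citing \cite[Remark 1.10]{mil99}) to realize $\pi$ directly as a $\bQ$-point. Your route is slightly cleaner conceptually; the paper's is more concrete. One small point: the claim that ``$f_\pi$ is the character whose isotypic summand in $\frk h^1(A)\otimes\Qlbar$ is the $\iota(\pi)$-eigenspace of $\Frob_q$'' is exactly the content being proved and is not established in the proof of Proposition~\ref{prop:lefgroupchar} itself---it follows from the action of $F = \bQ(\pi) \subset \End^0(A)$ on $\frk h^1(A)$, which is how the paper closes the loop.
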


\begin{remark}
  \Cref{prop:realization-enriched-classical} explains why we chose the name \textit{enriched} eigenvalues: one can recover the usual eigenvalues from the enriched eigenvalues.
\end{remark}

\begin{proof}
  Let $F$ be the center of $\End^0(A)$. It is a product of $\CM$ fields and totally real fields, denote by $\tilde F$ a Galois closure of a compositum of these fields. Let $M \in \langle A \rangle^\otimes$, and denote $M_{\tilde F} \in \langle A \rangle^\otimes_{\tilde F}$ the object obtained by extension of scalars. By \Cref{prop:lefgroupchar}, the Lefschetz group $L_A$ is split by $\tilde L$. Hence we can decompose $M_{\tilde F}$ as a sum of its isotypical components
  \[ M_{\tilde F} = M_{\chi_1} \oplus \dots \oplus M_{\chi_r} \]
  where $\chi_1, \dots, \chi_r$ are the characters for the action of $L_A$ on $M$.
  Let $\ell \not = p$, and fix an embedding $\tilde F \subset \overline{\bQ}_\ell$. Now, we consider the $\ell$-adic realization
  \[R_\ell(M_{\tilde F}) = R_\ell(M_{\chi_1}) \oplus \dots \oplus R_\ell(M_{\chi_r}). \]
  The eigenvalues of Frobenius on $R_\ell(M)$ are the same as the Frobenius eigenvalues on $R_\ell(M_{\tilde F})$, so that it suffices to show that the Frobenius eigenvalues on $R_\ell(M_{\chi_i})$ is $\rho(\lambda_i)$, where $\lambda_i$ is associated to $\chi_i$ via \Cref{prop:lefgroupchar}.

  In turn, it suffice to consider the simple Lefschetz motive $N_\chi$ in $\langle A \rangle^\otimes_{\tilde L}$ of type $\chi \in X^\star(L_{A, \tilde F})$ (which are defined in \cite[Chap 6, Prop 3.5.1]{saa72}), and show that the eigenvalue on Frobenius on $R_\ell(N_\chi)$ is $\rho(\lambda)$, where $\lambda$ is associated to $\chi$ via \Cref{prop:lefgroupchar}. The character associated to $N_\chi \otimes N_{\chi'}$ is $\chi + \chi'$, and the associated eigenvalue of Frobenius is the product of the eigenvalues on $N_\chi$ and on $N_{\chi'}$. Hence it suffices to show the desired equality on a generating set of characters.

  Such a set is given by the characters of $L_A$ acting on $\frk h^1(A)$. According to \cite[Remark 1.10]{mil99}, we have that
  \[L_A(\bQ) = \{ \alpha \in F \mid \alpha \cdot \overline \alpha \in \bQ^\times \}, \]
  and moreover the Frobenius $\pi \in F$ corresponds to an element of $L_A(\bQ)$. This equality is such that $\pi$ acts on $R_\ell(\frk h^1(A))$ as the geometric Frobenius acting by functoriality. Hence its eigenvalues are the eigenvalues of Frobenius. The other representation theoretic description of these eigenvalues is the following: they are the $\chi(\pi)$ where $\chi$ runs through the characters for the action of $L_A$ on $\frk h^1(A)$. These characters are just the restrictions of ring maps $F \to \tilde F$, so that the $\chi(\pi) \in \tilde L$ is $\pi_i$ for some $i$, which is the realization of $[\pi_i]$.
\end{proof}

Tate conjecture \Cref{conj:tate} predicts that there should be a nonzero map between two motives having an eigenvalue in common. The following proposition, which holds inconditionnaly, gives an analog of this with enriched eigenvalues. The main component of its proof is Tate's theorem on divisors of abelian varieties \cite{tat66}.

\begin{prop} \label{prop:lefmot-classification}
  Let $A/\bF_q$ be an abelian variety whose geometric endomorphisms are defined over $\bF_q$. Then the map
  \[
  \begin{array}{rrcl}
    \{\text{Simple objects of } \langle A \rangle^\otimes \} & \to & \mathrm{Orb}_G(\Eig(A)) \\
    M \quad & \mapsto & \{\text{enriched eigenvalues of } M \}
  \end{array}
  \]
  is bijective.
\end{prop}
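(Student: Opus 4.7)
The plan is to assemble the proposition from the two tannakian inputs already established: \Cref{prop:simple-object-tannaka-cat}, which classifies simple objects of a tannakian category whose Tannaka group is a torus, and \Cref{prop:lefgroupchar}, which identifies that torus in the case of $\langle A\rangle^\otimes$ together with its character lattice.

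First, I would observe that $\langle A\rangle^\otimes$ is a $\bQ$-linear tannakian category (a full tannakian subcategory of $\Lmot(\bF_q)$, which is semisimple by \cite{jan92}). By \Cref{prop:lefgroupchar}, its Tannaka group $L_A$ is an algebraic torus, split by the Galois extension $\tilde F/\bQ$ with Galois group $G=\Gal(\tilde F/\bQ)$. The hypotheses of \Cref{prop:simple-object-tannaka-cat} are therefore satisfied with $F=\bQ$, $L=\tilde F$ and $T=L_A$, so that proposition supplies a canonical bijection
\[
\{\text{simple objects of }\langle A\rangle^\otimes\}\;\xrightarrow{\;\sim\;}\;\mathrm{Orb}_G\bigl(X^\star(L_{A,\tilde F})\bigr),
\]
sending a simple $M$ to the $G$-orbit of characters of $L_{A,\tilde F}$ appearing in the isotypical decomposition of $M_{\tilde F}$.

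Second, I would transport this bijection through the Galois-equivariant isomorphism of abelian groups
\[
X^\star(L_{A,\tilde F})\;\cong\;\Eig(A)
\]
furnished by \Cref{prop:lefgroupchar}. Since the isomorphism commutes with the $G$-action, it induces a bijection $\mathrm{Orb}_G\bigl(X^\star(L_{A,\tilde F})\bigr)\cong\mathrm{Orb}_G(\Eig(A))$. Composing the two yields a bijection between simple objects of $\langle A\rangle^\otimes$ and $G$-orbits in $\Eig(A)$.

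Finally, I would check that the composite bijection coincides with the map in the statement. This is essentially a bookkeeping step: by \Cref{def:enriched-eigenvalues}, the enriched eigenvalues of $M$ are by definition the elements of $\Eig(A)$ corresponding, via \Cref{prop:lefgroupchar}, to the characters of $L_A$ acting on $M$. Thus the $G$-orbit attached to $M$ by the composite bijection is precisely the set of enriched eigenvalues of $M$. The main (very mild) subtlety is that characters may occur with multiplicity in $M_{\tilde F}$, as already warned in the remark following \Cref{prop:simple-object-tannaka-cat} and illustrated by $\Isoc_\bR$; but since the map in the statement records only the \emph{set} of enriched eigenvalues, and this set is exactly the $G$-orbit of characters appearing, no information is lost. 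No essential obstacle arises beyond invoking the two preceding propositions.
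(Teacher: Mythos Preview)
Your proof is correct and follows essentially the same approach as the paper: invoke \Cref{prop:lefgroupchar} to identify the Tannaka group as a torus split by $\tilde F$ with character lattice $\Eig(A)$, then apply \Cref{prop:simple-object-tannaka-cat} and transport through the Galois-equivariant isomorphism. Your added bookkeeping step verifying that the composite bijection agrees with the stated map (via \Cref{def:enriched-eigenvalues}) is a welcome clarification that the paper leaves implicit.
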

\begin{remark}
  As in \Cref{rmk:quotient-sets-abelian-groups}, $\mathrm{Orb}_G$ denotes the set of orbits, and not the quotient as abelian groups.
\end{remark}
\begin{proof}
  By \Cref{prop:lefgroupchar}, $\langle A \rangle_{\tilde F}$ has a split torus $L_{A, \tilde L}$ as Tannaka group.
  Hence, by \Cref{prop:simple-object-tannaka-cat}, we get a bijection
  \[
    \begin{array}{rrcl}
      \{\text{Simple objects of } \langle A \rangle^\otimes \} & \cong & \mathrm{Orb}_G(X^\star(L_A)) \\
      M & \mapsto &\{\text{characters for the action of } L_{A, \tilde L} \text{ on } M \}.
    \end{array}
  \]
  We conclude by using the isomorphism
  \[ \mathrm{Orb}_G(X^\star(L_A))\cong \mathrm{Orb}_G(\Eig(A)) \]
  provided by \Cref{prop:lefgroupchar}.
\end{proof}

\begin{prop} \label{prop:motive-rank}
  Let $A$ be an abelian variety over a finite field $\bF_q$, whose geometric endomorphisms are defined over $\bF_q$, and $M$ be a simple Lefschetz motive in $\langle A \rangle^\otimes$ with $S$ as set of enriched eigenvalues. Assume that $M$ is pure of weight $2n$, and that the Frobenius acts on $M$ by multiplication by $q^n$, then $\dim(M) = \# S$.
\end{prop}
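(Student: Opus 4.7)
The plan is to leverage the torus structure of $L_A$ from \Cref{prop:lefgroupchar}: because $L_A$ is split by $\tilde F$ with character group $\Eig(A)$, extension of scalars splits $M_{\tilde F}$ into one-dimensional isotypic pieces, and then Galois descent back to $\bF_q$ pins down how these pieces assemble into the simple $\bF_q$-object $M$.

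Concretely, I would first write
\[ M_{\tilde F} \;=\; \bigoplus_{\lambda \in \Eig(A)} L_\lambda^{\oplus m_\lambda}, \]
where $L_\lambda$ denotes the one-dimensional simple object of $\langle A \rangle_{\tilde F}^\otimes$ associated to $\lambda$ (as in the proof of \Cref{prop:realization-enriched-classical}). Simplicity of $M$ together with \Cref{prop:lefmot-classification} forces $\{\lambda : m_\lambda > 0\} = S$, a single $G$-orbit, and $G$-equivariance of the isotypic decomposition forces $m_\lambda$ to be a common value $m$ for $\lambda \in S$. Hence $\dim M = m \cdot \#S$, and it suffices to prove $m = 1$.

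To get $m = 1$ I would compute $\End_{\Lmot}(M)$ via Galois descent. Morphisms extend scalars, so
\[ \End_{\Lmot}(M) \otimes_\bQ \tilde F \;\cong\; \End_{\langle A \rangle^\otimes_{\tilde F}}(M_{\tilde F}) \;\cong\; \prod_{\lambda \in S} M_m(\tilde F). \]
Fixing $\lambda_0 \in S$ with stabilizer $H \subset G$, the $G$-action on the right is the transitive permutation of factors via $G/H \cong S$ coupled to the semilinear action on $\tilde F$; a direct computation of fixed points yields $\End_{\Lmot}(M) \cong M_m(\tilde F^H)$. But $M$ is simple, so Schur's lemma forces $\End_{\Lmot}(M)$ to be a division algebra, and a matrix algebra over a field is a division algebra only when $m = 1$. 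Hence $\dim M = \#S$.

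The main obstacle is the Galois-descent identification in the previous display: the $G$-action couples its action on coefficients $\tilde F$ with the transitive permutation of the factors coming from $G/H \cong S$, so one must track the descent datum with some care to read off the invariants as $M_m(\tilde F^H)$ and not a potentially twisted form. The hypotheses of purity and of Frobenius acting by $q^n$ do not seem essential for the dimension count itself: via \Cref{prop:realization-enriched-classical} they amount to $\rho(\lambda) = q^n$ for every $\lambda \in S$, pinning down the weight and the specific orbits relevant to the application, while the equality $\dim M = \#S$ already follows from simplicity and the torus structure of $L_A$.
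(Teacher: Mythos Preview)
Your descent step contains a genuine gap: the identification $\End_{\Lmot}(M) \cong M_m(\tilde F^H)$ does not follow from what you have written, and is false in general. The $G$-invariants of $\prod_{\lambda \in S} M_m(\tilde F)$ do form a central simple algebra of dimension $m^2$ over $Z = \tilde F^H$, but nothing you have said forces its Brauer class to vanish; when $\End(M)$ is a nontrivial division algebra over $Z$, the object $M$ is still simple (Schur is satisfied) yet $m > 1$. Your own caveat about ``a potentially twisted form'' names precisely this obstruction without discharging it. Since your argument never invokes the weight or Frobenius hypotheses, it would, if valid, prove $\dim M = \#S$ for \emph{every} simple $M$ in $\langle A\rangle^\otimes$; but taking $A = E$ a supersingular elliptic curve over $\bF_{p^2}$ with all geometric endomorphisms defined gives $F = \tilde F = \bQ$, trivial $G$, and $M = \h^1(E)$ simple of dimension $2$ with $S$ a singleton and $\End(M)$ the quaternion algebra ramified at $p$ and $\infty$. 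So $m = 2$ there, and your closing remark that the hypotheses ``do not seem essential'' is exactly backwards.

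The paper closes this gap by showing the class of $\End(M)$ in $\mathrm{Br}(Z)$ is everywhere locally trivial, and this is where the hypotheses enter. At primes $\ell \neq p$, $\ell$-adic cohomology is a $\Ql$-valued fiber functor on $\langle M(n)\rangle$, so the invariants above $\ell$ vanish. At $p$, the hypothesis that Frobenius acts by $q^n$ forces the $F$-isocrystal attached to $M(n)$ to have slope $0$; crystalline cohomology then lands in slope-$0$ isocrystals, a category equivalent to $\Qp$-vector spaces, yielding a $\Qp$-valued fiber functor and killing the invariants above $p$. At the archimedean places, $M(n)$ has weight $0$, so the functor $\omega_\infty$ of \Cref{cor:rinf-fiber-fun} is an $\bR$-valued fiber functor on $\langle M(n)\rangle$, and the real invariants vanish as well. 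By Brauer--Hasse--Noether the class is globally trivial, whence $m = 1$.
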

\begin{proof}
  Let $Z$ denote the center of $\End(M)$. Then $\End(M)$ being a central simple $Z$-algebra, it has dimension $a^2$ as a $Z$-vector space. The integer $a$ is called multiplicity of $M$, and it is also the order of $[\End(M)]$ in $\mathrm{Br}(Z)$. With these notations, we have $\dim(M) = a \deg(Z)$. Indeed, consider $L$ a maximal commutative subalgebra of $\End(M)$, and $\tilde L$ a Galois closure of $L$. Then $L$ is an extension of $Z$ of degree $a$, and $\deg(L) = a \deg(Z)$. Moreover, one has $\deg(L) = \dim(M)$, because $M_{\tilde L}$ can be decomposed into a direct sum of characters indexed by embeddings $L \to \tilde L$.

  In addition, $Z$ can be computed as the fixed field of $\mathrm{Fix}_G(s)$ for any $s\in S$, where
  \[G=\Gal(\tilde F/\bQ).\]
  Hence $\deg(Z) = \#S$. To conclude, it suffices to prove that $a=1$. To do so, we can compute the local invariants of $\End(M)$ as a central simple $Z$-algebra.

Consider the $\bQ$-linear tannakian category $\langle M(n) \rangle$. For all $\ell \not = p$, $\ell$-adic cohomology defines a fiber functor on $\langle M(n) \rangle$. From this, we deduce that the local invariants at $\ell \not = p$ vanish. Moreover, as the Frobenius acts on $M$ by multiplication by $q^n$, it will act on $M(n)$ by the identity. Hence the $F$-isocrystal associated to $M(n)$ has slope $0$. The functor on Lefschetz motives extending

\[A \mapsto H_{\mathrm{crys}}(A\times_{\bF_q} \Fpbar/ W(\Fpbar)) \]
maps $\langle M(n) \rangle$ to $F$-isocrystals of slope $0$ over $\Fpbar$. This category being equivalent to the category of $\bQ_p$-vector spaces, crystalline cohomology defines a $\Qp$-linear fiber functor on $\langle M(n) \rangle$, so that the invariants at $p$ also vanish. As $M(n)$ has weight $0$, and using \Cref{cor:rinf-fiber-fun}, $\Rinf$ defines a $\bR$-linear fiber functor, so that the invariants at the infinite place vanish.
\end{proof}

We end this section with two lemmas on simple Lefschetz motives which will be used in \Cref{sec:appl-posit-inters}. They concern $\CM$ Lefschetz motives as defined in \Cref{conv:lef-mot}.

\begin{lemma} \label{lem:motive-polarization}
  Let $k$ be a subfield of $\bC$, and $N\in \Lmot^\CM(k)$ be a geometrically simple object of even weight which is not a Tate twist of the unit. Then $N$ has complex multiplication by the field $L = \End(N)$, and there exists a quadratic form $\eta$ on $N$ such that
  \begin{enumerate}[label=\roman*)]
  \item for every $z \in L$, the adjoint of $z$ with respect to $\eta$ is $\overline z$.
  \item $R_H(\eta)$ is a polarization of Hodge structure.
  \end{enumerate}
\end{lemma}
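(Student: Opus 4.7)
The plan is to combine structural tannakian arguments (using the abelian Tannaka group of $\Lmot^\CM(k)$) with classical Hodge-theoretic constructions, and then transport the resulting polarization back into the category of Lefschetz motives.

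\textbf{Step 1: $L$ is a field.} The Tannaka group of $\Lmot^\CM(k)$ is abelian (it is built from characters of CM abelian varieties, exactly as in \Cref{prop:lefgroupchar}). Hence $\End(N)$ is a commutative division algebra over $\bQ$, i.e.\ a field. Combining this with \Cref{prop:simple-object-tannaka-cat}, simple objects correspond to Galois orbits of characters, and the stabilizer of such an orbit is precisely $L$; this gives $[L:\bQ]=\dim N$, so $N$ has complex multiplication by $L$.

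\textbf{Step 2: $L$ is a CM field.} Pass to the Hodge realization $V=R_H(N)$, a polarizable $\bQ$-Hodge structure of weight $2n$ on which $L$ acts faithfully with $\dim_\bQ V=[L:\bQ]$. Since $N$ is geometrically simple and the Hodge realization is faithful on $\Lmot^\CM(\bar k)$, $V$ is simple as an $L$-Hodge structure. The classical theory of CM Hodge structures (e.g.\ Deligne--Milne--Ogus--Shih) then forces $L$ to be a CM field: if $L$ were not stable under a single intrinsic complex conjugation, the involution $\overline{V^{p,q}}=V^{q,p}$ would decompose $V$; the hypothesis that $N$ is not a Tate twist of the unit rules out the degenerate $(n,n)$-type case.

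\textbf{Step 3: construction of $\eta$ at the Hodge level.} Because $V$ is a direct summand of the cohomology of a CM abelian variety, it carries some polarization $\phi_0:V\otimes V\to \bQ(-n)$. Standard linear algebra with the CM field $L$ lets us modify $\phi_0$ to produce $\phi:V\otimes V\to\bQ(-n)$ making $z\in L$ self-adjoint as $\overline z$: averaging $\phi_0(zv,w)-\phi_0(v,\overline z w)$ over a $\bQ$-basis of $L$ yields the desired symmetry, and since the cone of polarizations in the $L$-equivariant subspace of bilinear forms is open and non-empty, one can arrange $\phi$ to remain a polarization. This is the classical existence of $L$-compatible polarizations for simple CM Hodge structures.

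\textbf{Step 4: descent to $\Lmot^\CM(k)$.} The main obstacle is to upgrade $\phi$ from a Hodge-theoretic morphism to a genuine morphism of Lefschetz motives over $k$. Here we invoke Milne's theorem that on CM abelian varieties every Hodge class is a Lefschetz class, equivalently that $R_H$ is fully faithful on $\Lmot^\CM(\bar k)$. This supplies $\eta_{\bar k}\colon N_{\bar k}\otimes N_{\bar k}\to\bUn(-n)$ in $\Lmot^\CM(\bar k)$ corresponding to $\phi$. To descend to $k$, observe that both source and target are already defined over $k$, so $\Gal(\bar k/k)$ acts on $\Hom_{\Lmot^\CM(\bar k)}(N_{\bar k}^{\otimes 2},\bUn(-n))$; the space of Galois-invariant $L$-compatible polarizations is again an open non-empty cone, and any element of it gives the required $\eta$. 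This step is the delicate one, as it is the point where the identity of Hodge and Lefschetz cycles on CM abelian varieties intervenes crucially; without Milne's result the Hodge polarization need not be motivic.
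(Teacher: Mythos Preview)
Your Step 4 has a genuine gap. The claim that $R_H$ is fully faithful on $\Lmot^\CM(\bar k)$ --- equivalently, that every Hodge class on a power of a CM abelian variety is a Lefschetz class --- is not a known theorem and in general fails: there exist simple CM abelian varieties whose Mumford--Tate group is strictly contained in their Lefschetz group (this happens whenever the CM type is degenerate), so there are Hodge morphisms between Lefschetz motives that are not morphisms in $\Lmot^\CM$. Note that the full faithfulness you invoke would in particular imply the Hodge conjecture for all CM abelian varieties, which is open.

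The paper's argument avoids this detour entirely and is much shorter. Rather than constructing an $L$-compatible polarization at the Hodge level and then trying to descend it (your Steps 3--4), one starts with a bilinear form that is \emph{already} in $\Lmot^\CM(k)$: choose any quadratic form $q$ on $\h^\star(A)$ with $R_H(q)$ a polarization (the intersection forms furnish such $q$, and they are built from Lefschetz cycles), and let $\eta$ be its restriction to the summand $N$. Then $R_H(\eta)$ is a polarization of Hodge structures, giving (ii). Adjunction with respect to $\eta$ defines an involution $\dagger$ on the field $L=\End(N)$; because $R_H(\eta)$ is a polarization, $\dagger$ is a \emph{positive} involution, so either $L$ is totally real with $\dagger=\id$ or $L$ is CM with $\dagger$ equal to complex conjugation. \Cref{lem:totally-real-endo} (essentially your Step~2) excludes the totally real case, and (i) follows. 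The point you missed is that \emph{any} motivic polarization on $N$ automatically satisfies (i), so no averaging, modification, or descent is needed.
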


\begin{proof}
  By definition, $N$ is a direct summand of the motive $\h^\star(A)$ associated to a $\CM$ abelian variety $A$. Let $q$ be a quadratic form on $\h^\star(A)$ such that $R_H(q)$ is a polarization of Hodge structure. Then the restriction $\eta$ of $q$ to $N$ is such that $R_H(\eta)$ is a polarization on the Hodge structure $R_H(N)$, in particular $\eta$ is nondegenerate.

  Adjunction with respect to $\eta$ induces an involution $x \mapsto x^{\dagger}$ on the commutative field $L = \End(N)$. As $R_H(\eta)$ is a polarization, we get that $x \mapsto x^\dagger$ is a positive involution, so that $L$ is either a totally real field or a $\CM$ field with complex conjugation $\dagger$. We conclude by \Cref{lem:totally-real-endo}, because $N$ is not a Tate twist of the unit.
\end{proof}

\begin{lemma} \label{lem:totally-real-endo}
  Let $k$ be a subfield of $\bC$, and $N$ be a geometrically simple Lefschetz motive in $\Lmot^\CM(k)$. Then either $N$ is a Tate twist of the unit, or the field $\End(N)$ is not totally real.
\end{lemma}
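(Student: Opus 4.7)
The plan is to reduce the statement to a classification of simple polarizable Hodge structures of CM type via Deligne's theorem on absolute Hodge cycles for CM abelian varieties. Fix an embedding $k \hookrightarrow \bC$ and set $V := R_H(N)$, which is a polarizable Hodge structure of CM type, pure of some weight $w$.

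The first step is to transfer the question to Hodge theory. By Deligne's theorem, the Hodge realization $R_H \colon \Lmot^{\CM}(\bar k) \to \HS_\bQ$ is fully faithful; combined with the strict definition of $\Lmot^\CM(k)$—demanding that the CM-algebras of the generators be defined over $k$, so that geometric endomorphisms of CM Lefschetz motives do not escape the ground field—this yields the identification $L := \End(N) = \End(N_{\bar k}) = \End_{\HS}(V)$. Since $L$ is a field and coincides with the full Hodge endomorphism algebra, $V$ is forced to be simple as a Hodge structure (otherwise $\End_{\HS}(V)$ would contain a non-trivial matrix algebra), and $V$ is an $L$-module of rank one, so in particular $\dim_\bQ V = [L:\bQ]$.

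Assuming now that $L$ is totally real, I would show that $V$ is of Tate type. Decomposing
\[ V \otimes_\bQ \bC = \bigoplus_{\tau \colon L \to \bC} V_\tau \]
into $L$-eigenspaces (each a line, as $V$ has $L$-rank one), each $V_\tau$ lies in a single Hodge component $V^{p_\tau, q_\tau}$ with $p_\tau + q_\tau = w$. The key observation is that complex conjugation $c$ on $V_\bC$ commutes with the $\bQ$-linear action of $L$ and therefore maps $V_\tau$ to $V_{\bar\tau}$ with $\bar\tau = \tau \circ c_L$, while simultaneously swapping $V^{p,q}$ and $V^{q,p}$. Since $L$ is totally real, $c_L = \id_L$, so $c$ preserves each $V_\tau$, forcing $V_\tau \subseteq V^{p_\tau,q_\tau} \cap V^{q_\tau,p_\tau}$ and hence $p_\tau = q_\tau = w/2$. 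Thus $V$ is pure of type $(w/2, w/2)$, i.e., a Tate Hodge structure, and its simplicity gives $V \cong \bQ(-w/2)$ and $L = \bQ$.

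Finally, the full faithfulness of $R_H$ on $\Lmot^\CM(\bar k)$ yields $N_{\bar k} \cong \bUn(-w/2)$; since the Tate motive $\bUn(-w/2)$ is already defined over $\bQ$ and admits no nontrivial $k$-forms in the Lefschetz category, one concludes $N \cong \bUn(-w/2)$, a Tate twist of the unit. The main obstacle is the initial identification $\End(N) = \End_{\HS}(V)$; once this is secured, the argument is purely an exercise on simple CM Hodge structures with totally real endomorphism field, where the interaction between complex conjugation on the coefficients and on the Hodge filtration forces all Hodge types to be diagonal.
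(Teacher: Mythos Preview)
Your argument has a genuine gap at its very foundation: the claim that $R_H\colon \Lmot^{\CM}(\bar k)\to \HS_\bQ$ is fully faithful is neither what Deligne's theorem says nor true. Deligne's theorem asserts that Hodge classes on abelian varieties are \emph{absolute Hodge}; it says nothing about Lefschetz classes. Full faithfulness of $R_H$ on $\Lmot^{\CM}$ would mean that every Hodge class on powers of a CM abelian variety is a Lefschetz class, equivalently that the Mumford--Tate group $MT(A)$ equals the Lefschetz group $L(A)$. This fails already for simple CM abelian fourfolds: there exist primitive CM types for which $MT(A)\subsetneq L(A)$, producing Hodge classes that are not Lefschetz (see e.g.\ Ribet's construction of exotic Hodge classes on CM abelian varieties). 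In such cases a simple Lefschetz motive $N$ can have $R_H(N)$ non-simple, so the identification $\End(N)=\End_{\HS}(V)$ breaks down.

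Your step~3 survives without full faithfulness: since $\Lmot^{\CM}(k)$ is neutral over $\bQ$ (via the underlying vector space of $R_H$) and has a torus as Tannaka group, a simple $N$ satisfies $\dim N=[L:\bQ]$, so $V$ has $L$-rank one and your complex-conjugation argument correctly forces $V$ to be of Tate type when $L$ is totally real. But this only says that the character $\chi$ of $L(A)$ carried by $N$ restricts to a power of the weight character on $MT(A)$; when $MT(A)\subsetneq L(A)$ there are characters with this property that are \emph{not} powers of the weight character on $L(A)$, and nothing in your argument excludes them. To finish you must use the full hypothesis $\iota[f]=[f]$ in $X^\star(L(A))$ and argue combinatorially in the character lattice---which is exactly the paper's proof. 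In short, passing to Hodge structures loses precisely the information needed to close the argument.
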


\begin{proof}
  Let $A$ be an abelian variety over $k$ of $\CM$ type such that $N$ is a submotive of $\h^1(A)^{\otimes n}$ for some $n$. We can assume that $k$ is algebraically closed, and that $A = \prod_i A_i$ has $\CM$ by a $\CM$-algebra $L=\prod L_i$, with $\CM$ type $\Phi \subset \Hom(L, k)$. Let $\tilde L$ be the Galois closure in $k$ of a compositum of the $L_i$'s. Then, we can identify $\Hom(L, k)$ with $\Hom(L, \tilde L)$, and we will denote $G = \Gal(\tilde L/\bQ)$ and $\iota \in G$ the complex conjugation.
  
  In \cite[Proposition 2.5]{mil99}, Milne considers Galois orbits of $\CM$ types. But according to \cite[Proposition 2.2]{mil99}, we have that $\Hom(L, \tilde L) \cong \mathcal C$ where $\mathcal C$ is the Galois orbit of $\Phi$. We deduce from \Cref{prop:simple-object-tannaka-cat} that $N$ corresponds to a conjugacy class $C$ of elements in
  \[ X^\star(L(A)) = \frac{\{f: \Hom(L, \tilde L) \to \bZ\}}{ \{f \; | \; f = \iota f \text{ and } \sum_{\sigma \in \Hom(L,\tilde L)} f(\sigma) = 0 \}}\]
  for the action of $G = \Gal(\tilde L/\bQ)$. Moreover, the endomorphism algebra of $N$ is isomorphic to the fixed field of $\Fix_G([f])$ for any $[f] \in C$. Hence, $N$ is real if and only if $\iota \in \Fix_G([f])$. Let $f$ be a lift of $[f]$ as a function $f: \Hom(L, \tilde L) \to \bZ$. If $\iota [f] = [f]$, then there exists $g$ with $\iota g = g$ and $\iota f = f + g$. By applying $\iota$ again, we see that $g = 0$, so that $\iota f = f$.

  It follows that $[f]$ corresponds to a Tate twist, because the element $\delta_{\sigma} + \iota \delta_{\sigma} \in X^\star(L(A))$ is independant of $\sigma: L \to \tilde L$, and corresponds to the simple object $\bUn(-1)$.
  \end{proof}

\section{Main results} \label{sec:positivity-results}

In this section, we state the main result of the paper, \Cref{thm:main} (i.e. \Cref{thm:intro-frob-rank} from the introduction), whose proof is given in \Cref{sec:appl-posit-inters}. We then deduce from it the theorems announced in the introduction.

Let $A$ be a geometrically simple abelian variety over $\bF_q$, and consider its Frobenius rank $r$, its multiplicity $m$ and its center $F$ as in \Cref{not:ab-var}.
\begin{remark} \label{rem:mult-rank-fpbar}
  By considering models, the notions of Frobenius rank and multiplicity also make sense for a simple abelian variety over $\Fpbar$.
\end{remark}

\begin{theorem}\label{thm:main}
  Let $A$ be a simple abelian variety over $\Fpbar$, of dimension $g$, Frobenius rank $r$ and multiplicity $m$ (c.f. \Cref{rem:mult-rank-fpbar}). Assume that
  \begin{enumerate}
  \item $m$ is odd,
  \item $r \geqslant \frac{g}{m}-1,$
  \item there exists a totally real field $D$ disjoint from $F$ such that $A$ has $\CM$ by $D \cdot F$, i.e. that $\deg D = m$ and there is a map $D \cdot F \to \End^0(A)$.
  \end{enumerate}
  Then all powers of $A$ satisfy the standard conjecture of Hodge type.
\end{theorem}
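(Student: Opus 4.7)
The plan is to use the tannakian formalism developed earlier to reduce the positivity required by \Cref{conj:SCHT} for all powers of $A$ to a positivity statement on \emph{exotic} simple summands of rank $2$, and then to handle this last case by combining \Cref{thm:ancona-marmora} with the tannakian comparison of quadratic forms \Cref{cor:two-quad-forms}.

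By \Cref{prop:cyc-mot-comp}, the bilinear forms of \Cref{conj:SCHT} on $A^n$ are realizations via $\Rz$ of motivic quadratic forms $\eta_{\mathcal{L},\mot}$ defined on the motivic primitive parts $\frk P^{2k}_{\mathcal{L}}(A^n)(k)$, which live in $\Lmot(\Fpbar)$. The classification of their simple summands in terms of enriched eigenvalues (\Cref{prop:lefmot-classification}) feeds into the decomposition \Cref{prop:decomp-main}, which splits each primitive part as an orthogonal direct sum of three families of summands: two for which positivity of $\eta_{\mathcal{L},\mot}$ is already known, and a third consisting of \emph{exotic} summands described explicitly by \Cref{thm:exomot}. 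By orthogonality, the problem reduces to a simple exotic Lefschetz motive $M$ of rank $2$ with quadratic form $q$ obtained from $\eta_{\mathcal{L},\mot}$; since $\Rz(M)$ is a $2$-dimensional $\bQ$-vector space, $\Rz(q)$ has signature $(2,0)$, $(1,1)$, or $(0,2)$.

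The key step is to produce a lift to characteristic zero. Although $M$ itself cannot be lifted in general, hypotheses (2) and (3), together with an explicit analysis of the enriched eigenvalues of $M$, imply that the power $M^m$ does lift to a $\CM$ Lefschetz motive $\mathcal{M}$ over the ring of integers of some $p$-adic field $K$: condition (3) supplies the totally real field $D$, so that $D \cdot F$ is a $\CM$ algebra of the correct degree to package the $m$ copies of the characters of $L_A$ underlying $M$, while condition (2) ensures that the corresponding $\CM$ type can be realized motivically. By \Cref{lem:motive-polarization}, the generic fibre of $\mathcal{M}$ carries a quadratic form $\eta$ whose Hodge realization is a polarization and whose adjoint involution on $\End(\mathcal{M}_K)$ is complex conjugation. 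Applying \Cref{thm:ancona-marmora} to $(\mathcal{M}, \eta)$ yields a quadratic form $q'$ on $M^m$ whose $\Rz$-signature $(s_+, s_-)$ satisfies $4 \mid s_-$.

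Finally, by \Cref{cor:two-quad-forms} the forms $q'$ and $q^{\oplus m}$ have the same signature. If $\Rz(q)$ has signature $(2,0)$, $(1,1)$, or $(0,2)$, then $\Rz(q^{\oplus m})$ has signature $(2m, 0)$, $(m, m)$, or $(0, 2m)$; since $m$ is odd, the constraint $4 \mid s_-$ forces the first case, so $\Rz(q)$ is positive definite. This completes the argument on the exotic summands and hence, by orthogonality, proves the conjecture for all powers of $A$. The hard part is the construction of $\mathcal{M}$ and $\eta$ satisfying the three hypotheses of \Cref{thm:ancona-marmora}; this is precisely where the explicit shape of exotic summands from \Cref{thm:exomot}, the largeness condition $r \geqslant g/m - 1$, and the existence of the totally real field $D$ disjoint from $F$ all enter essentially.
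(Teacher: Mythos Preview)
Your proposal is correct and follows essentially the same approach as the paper's proof in \S\ref{sec:appl-posit-inters}: reduce via \Cref{prop:cyc-mot-comp} and \Cref{prop:decomp-main} to the exotic rank-$2$ summands, lift $M^{\oplus m}$ using \Cref{thm:exomot} and \Cref{lem:motive-polarization}, apply \Cref{thm:ancona-marmora}, and conclude with \Cref{cor:two-quad-forms} and the parity of $m$. The one point left implicit in your sketch is the role of the archimedean functor $\Rinf$ of \Cref{prop:arch-functor-lefschetz}: it is what verifies the positivity hypotheses of \Cref{cor:two-quad-forms}, both for $q$ (as a restriction of $\eta_{\mathcal L,\mot}$, positive by the first part of \Cref{prop:arch-functor-lefschetz}) and for $q'$ (via the ``Moreover'' clause applied to the CM lift), and this is precisely the bridge between Hodge-theoretic positivity in characteristic $0$ and the desired positivity on algebraic cycles in characteristic $p$.
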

We will give a proof in \Cref{sec:appl-posit-inters}.

\begin{remark} \label{rem:main-m}
  When $m = 1$, assumption (3) is automatic: $D = \bQ$ satisfies the condition. In \cite{agu24}, we construct many abelian varieties satisfying the hypothesis of the \Cref{thm:main} with $m=1$. It should be possible to adapt the techniques to construct abelian varieties over $\Fpbar$ satisfying the hypothesis for arbitrary $m>1$.
\end{remark}

\begin{cor} \label{cor:conj-prime-dimension}
  Let $A$ be a simple abelian variety over $\Fpbar$, of dimension $g$. Assume that $g$ is a prime number, then all powers of $A$ satisfy the standard conjecture of Hodge type.
\end{cor}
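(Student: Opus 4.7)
The plan is to apply \Cref{thm:main} directly to $A$, after classifying the Honda--Tate invariants of simple abelian varieties of prime dimension over $\Fpbar$. Write $F$ for the centre of $\End^0(A)$ and $m$ for the multiplicity, so that $m\cdot[F:\bQ]=2g$; since $g$ is prime, the possibilities are
\[(m,[F:\bQ])\in\{(1,2g),(2,g),(g,2),(2g,1)\}.\]
I would first rule out the cases with $F$ totally real, namely $(2g,1)$ and $(g,2)$ with $F=\bQ(\sqrt q)$: in both the Frobenius is $\pm\sqrt q$, so $A$ is supersingular, and over $\Fpbar$ any supersingular abelian variety is isogenous to a power of a fixed supersingular elliptic curve, hence not simple in dimension $\geq 2$. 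A short Honda--Tate computation on the local invariants of $\End^0(A)$ at $p$ shows that the remaining non-totally-real case $(2,g)$ can occur only for $g=2$, and then forces Newton slopes $(\tfrac12,\tfrac12)$; this is again supersingular and hence impossible over $\Fpbar$. Only the pairs $(1,2g)$ and, for $g$ odd, $(g,2)$ with $F$ imaginary quadratic remain.

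In the case $(m,[F:\bQ])=(g,2)$ with $g$ odd, conditions~(1) and~(2) of \Cref{thm:main} are immediate, since $m=g$ is odd and $g/m-1=0$. For condition~(3), Tate's theorem provides a $\CM$ field $L\subset\End^0(A)$ of degree $2g$ containing $F$; its maximal totally real subfield $L^+$ has degree $g=m$, satisfies $L^+\cap F=\bQ$ (as $F$ is totally imaginary while $L^+$ is totally real), and $L=L^+\cdot F$. Thus $D:=L^+$ fulfils condition~(3).

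In the case $(m,[F:\bQ])=(1,2g)$, conditions~(1) and~(3) are trivial (taking $D=\bQ$, cf.\ \Cref{rem:main-m}), and the only remaining point is $r\geq g-1$. This is the step I expect to be the main obstacle; I would deduce it from the Lenstra--Zarhin analysis of multiplicative relations between Frobenius eigenvalues \cite{LZ93,zar94}, which in prime dimension and multiplicity one forces the kernel of the realization map $\rho$ of \Cref{def:realization-eigenvalues} to have rank at most one, equivalently $r\geq g-1$. With this bound in hand, \Cref{thm:main} applies to both surviving cases and yields the standard conjecture of Hodge type for all powers of $A$.
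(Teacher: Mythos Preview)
Your proof is correct and takes a more explicit route than the paper's. The paper argues in three lines: a theorem of Tankeev \cite{tan83} gives $r \geqslant g-1$; combined with $r \leqslant g/m$ this forces $m=1$; then \Cref{thm:main} applies with $D=\bQ$ as in \Cref{rem:main-m}. You instead run a full Honda--Tate classification of the pairs $(m,[F:\bQ])$ and verify the hypotheses of \Cref{thm:main} separately in each surviving case.

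The substantive difference is your treatment of $(m,[F:\bQ])=(g,2)$ with $F$ imaginary quadratic and $g$ an odd prime. Such varieties do occur over $\Fpbar$ --- for instance $g=3$, $F=\bQ(i)$, and $\pi$ a Weil $p^3$-number with valuations $1$ and $2$ at the two primes above a split $p$ --- and for them one computes $r=1<g-1$, so this case is not excluded by the rank inequality alone. You handle it directly by producing $D=L^+$ for condition~(3) of \Cref{thm:main}; this works because $F$, being imaginary quadratic, is Galois over $\bQ$ and meets any totally real field only in $\bQ$, so $L^+\cdot F = L$ has the required degree. (A terminological quibble: the CM field $L\subset\End^0(A)$ of degree $2g$ is furnished by Honda--Tate and the existence of CM liftings rather than by ``Tate's theorem'' as such.) In the principal case $m=1$, both arguments rest on the same external input, the bound $r\geqslant g-1$; you attribute it to Lenstra--Zarhin where the paper cites Tankeev, and you are right to flag this as the non-formal step.
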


\begin{proof}
  Let $A$ be a simple abelian variety over $\Fpbar$ of prime dimension $g$. Let $m$ be the multiplicity of $A$, and $r$ be the Frobenius rank of $A$. Then by a theorem of Tankeev \cite{tan83}, we have that $r \geqslant g-1$. As $r \leqslant \frac{g}{m}$, we get that $m=1$ which falls in the case described by \Cref{rem:main-m}. Hence we conclude by \Cref{thm:main}.
\end{proof}

\begin{cor}
  The standard conjecture of Hodge type is satisfied by power of abelian threefolds.
\end{cor}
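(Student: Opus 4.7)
The plan is to reduce the statement to Corollary \ref{cor:conj-prime-dimension} by a specialization argument, with the classification of abelian threefolds from \cite{zar15} providing the control needed to ensure that the specialization lands in the simple case.

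First I would record the (standard) fact that the standard conjecture of Hodge type propagates from the special fibre to the generic fibre of a smooth projective family: if $\mathcal X \to \Spec R$ is a smooth projective scheme over a discrete valuation ring and SCHT holds for $\mathcal X_s$, then it holds for $\mathcal X_\eta$. Indeed, the specialization map identifies $\mathcal Z^n_\num(\mathcal X_\eta)$ with a $\bQ$-subspace of $\mathcal Z^n_\num(\mathcal X_s)$, respects intersection pairings, and sends the class of a relatively ample line bundle on $\mathcal X_\eta$ to one on $\mathcal X_s$; positive definiteness of the intersection form on the primitive part of $\mathcal X_s$ therefore restricts to positive definiteness on the primitive part of $\mathcal X_\eta$.

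Given an abelian threefold $A$ over an arbitrary field $k$, I would spread $A$ out to a smooth abelian scheme $\mathcal A$ over a finitely generated $\bZ$-subalgebra, reduce modulo a suitable closed point to obtain an abelian threefold over some $\bF_q$, then extend scalars to $\Fpbar$, yielding $A_0$. The crucial input from \cite{zar15} is that this specialization can be arranged so that $A_0$ is \emph{geometrically simple}. Once $A_0$ is simple over $\Fpbar$, its dimension $3$ is prime, so Corollary \ref{cor:conj-prime-dimension} directly yields SCHT for every power $A_0^n$. Applying the specialization principle of the previous paragraph to each smooth projective family $\mathcal A^n$ then transfers SCHT from $A_0^n$ back to $A^n$, concluding the proof.

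The main obstacle is the middle step: as explicitly noted in the introduction, simplicity is not preserved under specialization, so a naive spread-out-and-reduce argument would produce an $A_0$ which may factor into many pairwise non-isogenous simple components. Zarhin's classification of abelian threefolds provides precisely enough control on the isogeny classes appearing in positive characteristic to guarantee the existence of a simple specialization in the opposite direction (lifting or matching simple threefolds over $\Fpbar$ against arbitrary threefolds); without this ingredient, one would instead have to handle non-simple $A_0$ directly, whose powers can carry many additional algebraic cycles beyond those built from the simple components and for which \Cref{cor:conj-prime-dimension} does not apply.
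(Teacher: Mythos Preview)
Your strategy has a genuine gap: the claim that \cite{zar15} lets you arrange a \emph{simple} specialization $A_0$ is false, and this is not what Zarhin proves. If $A$ is already a product, say $E_1 \times E_2 \times E_3$ with three pairwise non-isogenous elliptic curves, then every specialization of $A$ is again a product; no choice of closed point will ever produce a simple $A_0$. So the reduction to \Cref{cor:conj-prime-dimension} alone cannot cover all threefolds.

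The paper's proof uses the specialization principle exactly as you do to reduce to $A_0$ over $\Fpbar$, but then performs a case split rather than forcing simplicity. If $A_0$ is simple, \Cref{cor:conj-prime-dimension} applies and you are done. If $A_0$ is \emph{not} simple, the relevant content of \cite[Thm 1.1]{zar15} is that a non-simple abelian threefold over $\Fpbar$ is \emph{neat}: every Tate class on every power of $A_0$ is a Lefschetz class. Once that is known, SCHT for all powers of $A_0$ follows from Milne's positivity result for Lefschetz classes \cite[Remark 3.7]{mil02} (or equivalently \cite[Proposition 5.4]{anc21}). Thus Zarhin's input is a structural statement about cycles on non-simple threefolds, not a statement about the existence of simple specializations; your last paragraph correctly identifies the obstruction but misreads how \cite{zar15} resolves it.
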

\begin{proof}
  By a specialization argument \cite[Prop 3.16]{anc21}, it suffices to prove the result for abelian threefolds over $\Fpbar$. Let $A$ be an abelian variety over $\Fpbar$, and let us prove the conjecture for powers of $A$. If $A$ is simple then the conjecture follows from \Cref{cor:conj-prime-dimension}. Otherwise, it follows from \cite[Thm 1.1]{zar15} that $A$ is neat, i.e. that Tate classes in the powers of $A$ are Lefschetz classes. Hence the standard conjecture of Hodge type for powers of $A$ follows from \cite[Remark 3.7]{mil02} or \cite[Proposition 5.4]{anc21}.
\end{proof}

\section{Archimedean fiber functors} \label{sec:arch-fiber-funct}

A classical argument of Serre shows that there is no cohomology theory for varieties over finite fields which takes values in real vector spaces. But there is no known obstruction for having a cohomology theory taking values in $\Isoc_\bR$ as defined in \Cref{def:real-isocrystals}. The goal of this section is to exploit the existence of an archimedean fiber functor over the category of Lefschetz motives. It plays the role of a cohomology theory for abelian varieties, but without a cycle class map.

\begin{definition}
  For each $n \in \bZ$, the object $\bR(n)$ of $\Isoc_\bR$ will be the graded vector space $\bC$ in degree $-2n$, with the complex conjugation as antilinear automorphism. For simplicity, we will denote $\bR(0)$ by $\bR$.
\end{definition}

\begin{remark}
  The object denoted $\bR$ in the previous definition is the unit object of $\Isoc_\bR$.
\end{remark}

\begin{definition} \label{def:positive-definite}
  Let $(V, \sigma)$ be an object of $\Isoc_\bR$, pure of weight $n$, and
  \[ \eta: V \otimes V \to \bR(n) \]
  be symmetric if $n$ is even, and antisymmetric if $n$ is odd. We say that $\eta$ is positive definite if for each $v \in V \setminus \{0\}$,
  \[\eta(v, \sigma v) \in \bR_{>0}.\]
  In other words, we say that $\eta$ is positive definite if the Hermitian form
  \[ \hat \eta: (v,w) \mapsto \eta(v, \sigma w),\]
  is positive definite.
\end{definition}

\begin{prop} \label{prop:fiber-even-isoc}
  Let $\Isoc_{\bR}^{\text{even}}$ be the category of real isocrystals of even weight. The functor mapping an even weight isocrystal $(V, \sigma)$ to the set of fixed points $V^\sigma=\{v \in V \mid \sigma(v) = v\}$ is a fiber functor
  \[\omega: \Isoc_{\bR}^{\text{even}} \to \Vect_{\bR}. \]
  Moreover, for any quadratic form $\eta$ on $V \in \Isoc_{\bR}^{\text{even}}$, $\eta$ is positive definite in the sense of \Cref{def:positive-definite} if and only if the quadratic form $\omega(\eta)$ is positive definite on the $\bR$-vector space $\omega(V)$.
\end{prop}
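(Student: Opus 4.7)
The plan is to exploit the fact that on the full subcategory of even-weight isocrystals the antilinear map $\sigma$ satisfies $\sigma^2 = \id$, so that $(V,\sigma)$ is nothing but a $\bC$-vector space equipped with a descent datum for the extension $\bR \subset \bC$. Classical Galois descent then yields the canonical decomposition $V = V^\sigma \oplus iV^\sigma$, equivalently $V = V^\sigma \otimes_\bR \bC$ with $\sigma = \id \otimes c$ where $c$ denotes complex conjugation, and from this all the formal properties of $\omega$ will follow immediately. In particular $\dim_\bR V^\sigma = \dim_\bC V$, so $\omega$ lands in finite dimensional $\bR$-vector spaces.

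Concretely, I would check in sequence: $\omega$ is functorial and $\bR$-linear because morphisms in $\Isoc_\bR$ are $\bC$-linear (hence $\bR$-linear) maps commuting with $\sigma$, so they restrict to fixed parts; $\omega$ is faithful because any $\bC$-linear map that vanishes on $V^\sigma$ vanishes on $V^\sigma \otimes_\bR \bC = V$; $\omega$ is exact because, via the decomposition $V = V^\sigma \oplus i V^\sigma$, a short exact sequence in $\Isoc_\bR^{\text{even}}$ is the direct sum of two copies of the corresponding short exact sequence of fixed subspaces; and $\omega$ is monoidal because for $V$ of weight $2a$ and $W$ of weight $2b$, $V \otimes W$ has even weight $2(a+b)$, the natural map $V^\sigma \otimes_\bR W^\sigma \to (V \otimes_\bC W)^{\sigma_V \otimes \sigma_W}$ is an isomorphism (by descent and a dimension count), and $\omega(\bR(0)) = \bR$. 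The restriction to even weights is what makes this run: in odd weight $\sigma^2 = -\id$ has no nonzero fixed vectors and the construction degenerates.

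For the second assertion, first note that $\omega(\bR(n)) = \bR$, so $\omega(\eta)$ is a symmetric $\bR$-bilinear form on $V^\sigma$ (the case $n$ even being the only relevant one, since $\omega$ is only defined on even weights). For $v \in V^\sigma$ one has $\omega(\eta)(v,v) = \eta(v, v) = \eta(v, \sigma v) = \hat\eta(v,v)$, which gives one direction. For the converse, I would decompose an arbitrary $v \in V$ as $v = v_1 + iv_2$ with $v_1,v_2 \in V^\sigma$ and compute
\[ \hat\eta(v,v) = \eta(v_1 + iv_2,\, v_1 - iv_2) = \omega(\eta)(v_1,v_1) + \omega(\eta)(v_2,v_2), \]
the cross terms $\pm i\,\eta(v_1,v_2)$ cancelling by symmetry of $\eta$. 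Hence positive definiteness of $\omega(\eta)$ on $V^\sigma$ transfers to positive definiteness of $\hat\eta$ on $V$, since $v \neq 0$ forces $(v_1, v_2) \neq (0,0)$. I do not anticipate any serious obstacle; the only point that deserves attention is that the evenness hypothesis is exactly what makes $\sigma$ an honest antilinear involution so that Galois descent is available.
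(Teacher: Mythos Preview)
Your proof is correct and follows essentially the same approach as the paper: the key observation in both is that on even-weight objects $\sigma^2 = \id$, so $(V,\sigma)$ is a Galois descent datum for $\bR \subset \bC$ and $V^\sigma$ is the underlying real form. The paper compresses the verification of the fiber-functor axioms into a single sentence and cites the standard equivalence between positivity of a real quadratic form and of its associated Hermitian extension, whereas you spell out faithfulness, exactness, monoidality, and the explicit computation $\hat\eta(v,v) = \omega(\eta)(v_1,v_1) + \omega(\eta)(v_2,v_2)$; but the underlying argument is the same.
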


\begin{proof}
  For for an object $(V, \sigma)$ of even weight, we have $\sigma^2 = \id_V$. Hence $\sigma$ is a descent datum on $V$, and $V^{\sigma=\id}$ is its underlying real vector space. Hence, the functor $\omega$ is faithful and monoidal, so that it is a fiber functor. The last part of the proposition follows from the fact that a real quadratic form $q$ on a real vector space $W$ is positive definite if and only if the associated hermitian form $u,v \mapsto q(u, \overline v)$ is positive definite on $W_\bC$.
\end{proof}

The following proposition is the combination of results of \cite{mil02} and \cite{del82}.

\begin{prop} \label{prop:arch-functor-lefschetz}
  Let $k$ be a finite field. There exists a $\otimes$-functor
  \[\Rinf: \Lmot(k) \to \Isoc_{\bR},\]
  which is compatible with weights, and such that for any abelian variety $A$ with ample line bundle $\mathcal L$, bilinear form
  \[ \Rinf(\eta_{\mathcal L}): \Rinf(\frk P^n(A)) \otimes \Rinf(\frk P^n(A)) \to \bR(-n), \]
  is positive definite.
  Moreover, for any $p$-adic field $K$ with residue field $k$, and any
  \[N \in \Lmot^{\mathrm{CM}}(\mathcal O_K)\]
  with a bilinear form $\eta$ such that $R_{H,\iota}(q)$ is a polarisation of Hodge structure for some $\iota: K \to \bC$, then $\Rinf(R_{\frkp}(\eta))$ is positive definite.
\end{prop}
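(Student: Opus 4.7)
The plan is to construct $\Rinf$ as the composition of three functors: a canonical lift $\Lmot(k) \to \Lmot^{\CM}(\mcO_K)$ to CM Lefschetz motives in characteristic zero, the Hodge realization $R_{H,\iota}$, and a Weil-operator conversion from real Hodge structures to $\Isoc_\bR$. Positivity then follows from the Hodge--Riemann bilinear relations on the characteristic-zero lift.

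First I would construct the lift. Every object of $\Lmot(k)$ is cut out by a Lefschetz projector on $\h^\star(A)$ for an abelian variety $A/k$. Combining Honda--Tate theory with \cite{mil99}, one finds a CM abelian variety $\tilde A$ over a $p$-adic field $K$ with residue field $k$, whose special fibre is isogenous to $A$, and Lefschetz cycles on $A$ lift canonically to Lefschetz cycles on $\tilde A$. This reflects the fact that the Lefschetz groups $L_A$ and $L_{\tilde A}$ are canonically identified as $\bQ$-tori. Applying $R_{H,\iota}$ for a fixed embedding $\iota: K \hookrightarrow \bC$ then produces a tensor functor from $\Lmot(k)$ to polarizable rational Hodge structures, compatible with weights.

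Next, I would convert Hodge structures to real isocrystals: for a polarizable rational Hodge structure $V$ of weight $n$, form the object of $\Isoc_\bR$ given by $V_\bC$ placed in degree $n$ with antilinear involution $\sigma = C \circ c$, where $c$ is complex conjugation and $C$ is the Weil operator $C|_{V^{p,q}} = i^{p-q}\,\id$. A direct computation for $v \in V^{p,q}$ gives $\sigma^2(v) = i^{p-q} \cdot i^{p-q} v = (-1)^{p-q} v = (-1)^n v$, so this does define an object of $\Isoc_\bR$, and the assignment is tensorial and weight-preserving. Composing the three steps defines $\Rinf$; independence from the choices of $\tilde A$ and $\iota$ stems from the canonicity of the CM-cocharacter on $L_A$ which governs the Hodge realization on Lefschetz motives.

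For positivity, lift $\mathcal L$ to an ample line bundle $\tilde{\mathcal L}$ on $\tilde A$. By the Hodge--Riemann bilinear relations \Cref{thm:hdg-riem}, $R_{H,\iota}(\eta_{\mathcal L})$ polarizes the primitive part $R_{H,\iota}(\frk P^n(\tilde A))$. A classical identity going back to \cite{del82} asserts that for such a polarization of weight $n$, the pairing $(v,w) \mapsto \eta(v, C\bar w) = \eta(v, \sigma w)$ is a positive-definite Hermitian form, which is exactly the criterion of \Cref{def:positive-definite}; this gives the first positivity claim. The second statement, for $N \in \Lmot^{\CM}(\mcO_K)$ with polarized Hodge realization, follows immediately by the same argument applied directly to $N$. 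The hard part will be the lifting step: one needs Lefschetz classes (not merely algebraic cycles) to admit canonical lifts that are functorial with respect to Lefschetz correspondences, which is where Milne's identification of the Lefschetz group as an intrinsic $\bQ$-torus plays the essential role.
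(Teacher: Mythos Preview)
Your approach differs substantially from the paper's. The paper does not construct a lifting functor at all. Instead it invokes Milne's result that $\Lmot(\Fpbar)$ carries a unique polarization $\Pi$ compatible with the reduction functor $R_p:\Lmot^{\CM}(\overline{\bQ})\to\Lmot(\Fpbar)$, and then applies Deligne's theorem \cite[Chap.~2, Thm~5.20]{del82} that any polarized Tate triple admits a tensor functor to $\Isoc_\bR$ carrying $\Pi$-positive forms to positive-definite ones. Positivity of $\Rinf(\eta_{\mathcal L})$ then follows because geometric Weil forms are positive for $\Pi$, and the ``moreover'' clause is handled by descending $N$ to a number field inside $\overline{\bQ}$ and invoking the defining compatibility of $\Pi$ with $R_p$.

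Your route is in principle viable---Milne does prove that $R_p$ is an equivalence, so a quasi-inverse exists---but two points need repair. First, you cannot land in $\Lmot^{\CM}(\mcO_K)$ for a \emph{fixed} $p$-adic field $K$: different abelian varieties over $k$ require CM lifts over different number fields, so the lift has to target $\Lmot^{\CM}(\overline{\bQ})$ (and the source should be $\Lmot(\Fpbar)$, then precompose with base change). Second, and more seriously, your argument for the ``moreover'' clause has a real gap. You say it ``follows immediately by the same argument applied directly to $N$'', but by your construction $\Rinf(R_\frkp(N))$ is built from the Hodge realization of your \emph{chosen} lift $N'$ of $R_\frkp(N)$ at your \emph{chosen} embedding $\iota'$, neither of which is the given $N$ or the given $\iota$. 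Two CM lifts of the same abelian variety can have different CM types, and the same CM motive can have different Hodge realizations at different complex embeddings; what you need is that the induced object of $\Isoc_\bR$, or at least the positivity of the transported form, is independent of these choices. That independence is precisely what the abstract uniqueness of $\Pi$ encodes in the paper's proof, and in your framework it is the substantive step that remains to be supplied.
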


\begin{remark}
  In the following proof, we make use of results of \cite{mil02} in the form they were in the v2 version on arXiv, that we will cite as \cite{mil01}.
\end{remark}

\begin{proof}
  According to Milne \cite[Prop 1.5]{mil01}, there is a unique polarization $\Pi$ on $\Lmot(\Fpbar)$ which is compatible with the reduction functor
  \[R_p:\Lmot^{\mathrm{CM}}(\overline {\bQ}) \to \Lmot(\Fpbar), \]
  for a fixed $p$-adic prime of $\overline{\bQ}$.
  That is, if one has a $\CM$ Lefschetz motive $M$ over $\overline{\bQ}$, with a bilinear form $\eta$ such that $R_H(\eta)$ is a polarization of Hodge structure, then $R_p(\eta)$ is a polarization in the sense of $\Pi$. By \cite[Chap 2, Thm 5.20]{del82}, one gets a tensor functor compatible with the Tate--triple structures
  \[\Rinf: \Lmot(\Fpbar) \to \Isoc_\bR\]
  such that $\eta$ on $M \in \Lmot(\Fpbar)$ is positive in the sense of $\Pi$ if and only if $\Rinf(\eta)$ is positive definite in the sense of \Cref{def:positive-definite}. More concretely \cite[\S 5]{del82}, $\Rinf$ being compatible with the Tate structures, means it is compatible with weights and sends the Tate object to the Tate object.

  One gets a functor
  \[\Rinf: \Lmot(\bF_q) \to \Isoc_\bR \]
  by composing with the extension of scalars $\Lmot(\bF_q) \to \Lmot(\Fpbar)$.

  According to \cite[Prop 1.5]{mil01}, geometric Weil forms are positive for $\Pi$. We deduce from \cite[Remark 4.8]{mil01} that $\eta_{\mathcal L}$ is positive for $\Pi$ for any abelian variety $A$ over $\bF_q$ with ample line bundle $\mathcal L$. Hence $\Rinf(\eta_{\mathcal L})$ is positive definite in the sense of \Cref{def:positive-definite}.
  
  Moreover, consider $N \in \Lmot^\CM(\mathcal O_K)$ where $\mathcal O_K$ is the ring of integers of a $p$-adic field, endowed with a bilinear form $\eta$. Assume that $R_{H, \iota}(\eta)$ is a polarization for some $\iota: K \hookrightarrow \bC$. By definition, $N$ is a direct summand of the motive of an abelian variety $A$ of $\CM$-type. Such abelian varieties are defined over number fields, that is there exists a number field $D \subset K'$ and an abelian variety $B$ over $D$ such that
  \[A \times_K K' \cong B \times_D K',\]
  where $K'\subset \bC$ is a finite extension of $K$, which we will assume for simplicity to be $K$. We denote by $\sigma$ the inclusion $D \subset \bC$.
  
  Moreover, we can assume that the geometric endomorphisms of $B$ are defined over $D$, so that $B$ is also of $\CM$-type. The motive $N$ is a direct summand of $\h^\star(A)$, given by the image of a projector $\pi$. We get a motive $M$ over $D$, by defining it as the image of $\pi$ in $\h^\star(B)$. Hence we have
  \[N \cong M \times_D K. \]
  Moreover, the bilinear form $\eta$ on $N$ gives rise to a bilinear form $\eta'$ on $M$, and we have $R_{H, \sigma}(\eta') = R_{H, \iota}(\eta)$, which is a polarization of Hodge structure. Hence $R_p(\eta')_{\Fpbar}$ is positive for $\Pi$ as a bilinear form in $\Lmot(\Fpbar)$. But $R_p(\eta)_{\Fpbar} = R_p(\eta')_{\Fpbar}$, so that $\Rinf(\eta)$ is positive definite.
\end{proof}

\begin{cor} \label{cor:rinf-fiber-fun}  
  Let $k$ be a finite field, and $\Lmot(k)^{\text{even}}$ be the category of Lefschetz motives of even weight. The composition $\omega_\infty = \omega \circ \Rinf$, where $\Rinf$ is the functor from \Cref{prop:arch-functor-lefschetz} and $\omega$ the functor from from \Cref{prop:fiber-even-isoc}, is a fiber functor
  \[ \omega_{\infty} : \Lmot(k)^{\text{even}}  \to \Vect_{\bR}.\]
\end{cor}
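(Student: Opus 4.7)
The plan is to check, in turn, the three conditions that turn $\omega_\infty$ into a fiber functor on the $\bQ$-linear tannakian category $\Lmot(k)^{\mathrm{even}}$: tensor compatibility (together with $\bQ$-linearity and values in $\Vect_\bR$), exactness, and faithfulness.

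Tensor compatibility is essentially formal. By \Cref{prop:arch-functor-lefschetz}, $\Rinf$ is a $\bQ$-linear $\otimes$-functor compatible with weights, so it restricts to a $\otimes$-functor $\Lmot(k)^{\mathrm{even}} \to \Isoc_\bR^{\mathrm{even}}$; post-composing with the $\bR$-linear $\otimes$-functor $\omega$ of \Cref{prop:fiber-even-isoc} produces the desired $\bQ$-linear $\otimes$-functor to $\Vect_\bR$.

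Exactness is immediate from the fact, recalled after \Cref{conv:lef-mot}, that $\Lmot(k)$ is semisimple: every short exact sequence in $\Lmot(k)^{\mathrm{even}}$ splits, and any additive functor preserves split sequences.

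The substantive point, and the only step that really uses something specific about $\Rinf$, is faithfulness. By semisimplicity of the source and the already-established faithfulness of $\omega$, the question reduces to verifying that $\Rinf(M) \neq 0$ for every nonzero simple object $M$ of $\Lmot(k)^{\mathrm{even}}$. I would establish this by invoking the general principle that a strong monoidal functor between rigid symmetric monoidal categories preserves categorical dimensions $\dim_{\mathrm{cat}}(M) = \Tr(\mathrm{id}_M) \in \End(\bUn)$. On the source, for $M$ of even weight the Koszul sign is trivial and $\dim_{\mathrm{cat}}(M)$ coincides with the $\Ql$-rank of $R_\ell(M)$, a strictly positive integer whenever $M \neq 0$. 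On the target, $\dim_{\mathrm{cat}}(\Rinf(M)) \in \End(\bUn_{\Isoc_\bR}) = \bR$ equals $\dim_\bR \omega_\infty(M)$, again because the Koszul sign is trivial in even weight. Preservation of categorical dimension then forces $\omega_\infty(M)$ to be a nonzero real vector space, giving faithfulness. The main bookkeeping issue is the management of Koszul signs, which is clean precisely because we have restricted to the even-weight subcategory from the outset.
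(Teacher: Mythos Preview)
Your proof is correct and follows the same route as the paper: use weight-compatibility to restrict $\Rinf$ to a tensor functor $\Lmot(k)^{\mathrm{even}}\to\Isoc_\bR^{\mathrm{even}}$, then post-compose with the fiber functor $\omega$. The paper's two-line argument leaves exactness and faithfulness implicit (tacitly invoking that an exact tensor functor out of a rigid tensor category over a field is automatically faithful), whereas you spell these out via semisimplicity and preservation of categorical dimension; this is more detailed but not a different idea.
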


\begin{proof}
  As the functor $\Rinf$ is compatible with weights, it sends motives of even weights to objects of $\Isoc_\bR$ of even weight. Hence $\omega$ precomposed with
  \[\Rinf: \Lmot(k)^{\text{even}} \to \Isoc_\bR^{\text{even}},\]
  defines a real valued fiber functor on Lefschetz motives of even weights.
\end{proof}

\begin{remark}
   In \cite[Prop 4.4.3.1]{saa72}, Saavedra proves a statement which is similar to the previous corollary for Grothendieck motives over finite fields, but conditionnaly on the standard conjectures.
\end{remark}

The following theorem is a purely tannakian statement, which will be useful to compare quadratic forms.

\begin{theorem} \label{thm:tann-main}
  Let $\mathcal T$ be a tannakian category over $\bR$, endowed with two fiber functors
  \[\omega, \omega': \mathcal T \to \Vect_\bR.\]
  Let $M$ be an object of $\mathcal T$, endowed with two quadratic forms
  \[ \eta, \eta': {\Sym}^2 M \to \bUn.\]
  Assume that $\omega(\eta)$ and $\omega(\eta')$ are positive definite. Then $\omega'(\eta)$ and $\omega'(\eta')$ have the same signature.
\end{theorem}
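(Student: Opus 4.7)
The plan is to construct an intrinsic endomorphism of $M$ that compares $\eta$ and $\eta'$ in the tannakian category, and then to analyze its eigenvalues. Since $\omega(\eta)$ and $\omega(\eta')$ are positive definite, they are in particular non-degenerate; by faithfulness of $\omega$, the quadratic forms $\eta$ and $\eta'$ are themselves non-degenerate, so they induce isomorphisms $\phi_\eta, \phi_{\eta'}: M \xrightarrow{\sim} M^\vee$ in $\mathcal T$. I set
\[ T := \phi_\eta^{-1} \circ \phi_{\eta'} \in \End_{\mathcal T}(M), \]
so that by construction $\eta'(x,y) = \eta(Tx,y)$. The symmetry of $\eta$ and $\eta'$ forces $\eta(Tx,y) = \eta(x,Ty)$, i.e.\ $T$ is self-adjoint with respect to $\eta$.

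Next I analyze $T$ through $\omega$. Since $\omega(\eta)$ is a positive definite real bilinear form and $\omega(T)$ is self-adjoint with respect to it, $\omega(T)$ is diagonalizable with real eigenvalues $\lambda_1,\dots,\lambda_s$, so its minimal polynomial is the squarefree polynomial $\prod_i (X - \lambda_i) \in \bR[X]$. By \Cref{prop:charpoly-fiberfunctor} (and more directly by faithfulness of $\omega$), the minimal polynomial of $T$ viewed as an element of $\End_{\mathcal T}(M)$ coincides with that of $\omega(T)$. A Bezout decomposition of this squarefree polynomial then yields orthogonal idempotents in $\End_{\mathcal T}(M)$ cutting out the eigenspaces, giving an intrinsic decomposition in $\mathcal T$
\[ M = \bigoplus_{i=1}^s M_{\lambda_i}, \qquad M_{\lambda_i} := \ker(T - \lambda_i). \]
The self-adjointness of $T$ ensures that the summands $M_{\lambda_i}$ are pairwise orthogonal for both $\eta$ and $\eta'$, and on each $M_{\lambda_i}$ we have the identity $\eta'|_{M_{\lambda_i}} = \lambda_i \cdot \eta|_{M_{\lambda_i}}$.

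Now I use the positivity assumption under $\omega$. Applying $\omega$ to the previous identity yields $\omega(\eta')|_{\omega(M_{\lambda_i})} = \lambda_i \cdot \omega(\eta)|_{\omega(M_{\lambda_i})}$; since both sides are positive definite, one obtains $\lambda_i > 0$ for every $i$. Applying $\omega'$ to the same identity gives $\omega'(\eta')|_{\omega'(M_{\lambda_i})} = \lambda_i \cdot \omega'(\eta)|_{\omega'(M_{\lambda_i})}$, and scaling by a positive real number preserves the signature of a real quadratic form. Summing over the $\omega'(\eta)$-orthogonal decomposition $\omega'(M) = \bigoplus_i \omega'(M_{\lambda_i})$, one concludes that $\omega'(\eta)$ and $\omega'(\eta')$ have the same signature.

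The delicate point is the intrinsic decomposition $M = \bigoplus M_{\lambda_i}$: one needs the minimal polynomial of $T$ in the possibly non-neutral category $\mathcal T$ to split as a product of distinct real linear factors. This follows once one verifies, using only faithfulness of $\omega$, that the minimal polynomials of $T$ and $\omega(T)$ agree; this passage from a numerical fact about $\omega(T)$ to a structural decomposition of $M$ in $\mathcal T$ is where the fiber functor $\omega$ is transferring positivity information into the categorical structure that $\omega'$ can then read off.
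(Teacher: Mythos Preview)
Your proof is correct and follows the same core idea as the paper: both define the comparison endomorphism $T=\phi_\eta^{-1}\circ\phi_{\eta'}$, observe it is $\eta$-self-adjoint, and use positive definiteness of $\omega(\eta),\omega(\eta')$ to force the eigenvalues of $T$ to be positive. The difference lies in the endgame. The paper transfers only the \emph{characteristic polynomial} of $T$ to $\omega'$, and then invokes a separate lemma (\Cref{lem:constant-signature}) proved by a continuity/homotopy argument on the space of nondegenerate real quadratic forms. You instead transfer the \emph{minimal polynomial} (via faithfulness of $\omega$), which lets you split $M$ itself inside $\mathcal T$ into $T$-eigenobjects; the signature comparison is then immediate summand by summand with no topology needed. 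Your route is slightly more categorical and avoids the auxiliary lemma, at the cost of carrying out the idempotent construction and orthogonality check inside $\mathcal T$; the paper's route is more hands-off about $\mathcal T$ but relies on the connectedness of signature strata in $\Vect_\bR$. Both are short and either would serve.
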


\begin{proof}
  As $\omega(\eta)$ and $\omega(\eta')$ are positive definite, they are nondegenerate. This means that the maps
  \[r_\eta, r_{\eta'}: M \to M^*\]
  induced by $\eta$ and $\eta'$ become isomorphisms after applying $\omega$, hence they are already isomorphisms. Hence $\eta$ and $\eta'$ are nondegenerate. Let $u$ be the unique endomorphism of $M$ such that
  \[\eta(x, uy) = \eta(ux, y) = \eta'(x,y). \]
  More precisely, this means that the following diagram is commutative
  \begin{center}
    \begin{tikzcd}
      M \otimes M \arrow[d, "\id \otimes u"] \arrow[r, "u \otimes \id"] \arrow[rd, "\eta'"] & M \otimes M \arrow[d, "\eta"] \\
      M \otimes M \arrow[r, "\eta"]& M
    \end{tikzcd}
  \end{center}
  Concretely, $u = r_{\eta}^{-1} \circ r_{\eta'}$.

  Let $\chi_u(X)$ denote the characteristic polynomial of $u$ (\Cref{def:det-charpoly}). By \Cref{prop:charpoly-fiberfunctor}, we have that
  \[\chi_{\omega(u)}(X)= \chi_u(X) = \chi_{\omega'(u)}(X). \]
  We have that $\omega(\eta)$ and $\omega(\eta')$ are positive definite, and that
  \[\omega(\eta')(x,y) = \omega(\eta)(x, \omega(u) y)\]
  for all $x,y \in \omega(M)$. We deduce that $\omega(u)$ is a positive autoadjoint endomorphism of $\omega(M)$ with respect to the scalar product $\omega(\eta)$. Hence the eigenvalues of $\omega(u)$ are positive, so that the roots of $\chi_u(X)$ are positive.

  Hence the roots of $\chi_{\omega'(u)}(X)$ are also positive, and we deduce by \Cref{lem:constant-signature} that $\omega'(\eta)$ and $\omega'(\eta')$ have the same signature. Indeed,
  \[\omega(\eta')(x,y) = \omega'(\eta)(x, \omega'(u)y) \]
  for all $x, y \in \omega'(M)$, $\omega'(u)$ is autoadjoint with respect to $\omega'(\eta)$ and has positive eigenvalues.
\end{proof}

\begin{lemma} \label{lem:constant-signature}
  Let $V$ be a $\bR$-vector space, $\eta$ be a nondegenerate quadratic form, and $u$ an endomorphism of $V$ which is autoadjoint with respect to $\eta$ and has positive eigenvalues. Then $\eta$ has the same signature as
  \[x, y \mapsto \eta(x, uy). \]
\end{lemma}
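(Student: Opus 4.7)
The plan is to reduce the statement to the trivial fact that pulling back a nondegenerate symmetric bilinear form along a linear automorphism does not change its signature. Concretely, I will construct an invertible $\eta$-selfadjoint endomorphism $v$ of $V$ with $v^2 = u$. Once such a $v$ is in hand, the computation
\[
\eta(x, uy) \;=\; \eta(x, v(vy)) \;=\; \eta(vx, vy)
\]
exhibits $(x,y) \mapsto \eta(x, uy)$ as the pullback of $\eta$ along the automorphism $v$, so these two quadratic forms share the same signature.

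To build $v$, I would invoke the holomorphic functional calculus for $u$. Because $u$ has only positive real eigenvalues, the principal branch of the square root $f(\lambda) = \sqrt{\lambda}$ is holomorphic on an open neighbourhood of the spectrum of $u$ in $\bR$. Hermite interpolation at the roots of the minimal polynomial of $u$, with multiplicities dictated by its Jordan structure, then produces a polynomial $p \in \bR[X]$ satisfying $p(u)^2 = u$. Setting $v := p(u)$, we have $v^2 = u$, which is invertible (all eigenvalues nonzero), hence $v$ is invertible. Moreover $v$ is $\eta$-selfadjoint because any real polynomial in the $\eta$-selfadjoint operator $u$ is again $\eta$-selfadjoint. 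This is exactly the structure needed for the one-line computation above.

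The only delicate point is the existence of a \emph{real} square root that is polynomial in $u$; this is precisely where the positivity hypothesis on the eigenvalues is used, as a negative eigenvalue would obstruct the existence of a real $p$. If one prefers to avoid Hermite interpolation, an essentially equivalent deformation argument works: setting $u^t := \exp(t \log u)$ for $t \in [0,1]$ gives a continuous path of $\eta$-selfadjoint invertible endomorphisms of $V$ from $\id$ to $u$, so $\eta_t(x,y) := \eta(x, u^t y)$ is a continuous path of nondegenerate symmetric bilinear forms from $\eta$ to $\eta(\cdot, u\cdot)$; since the signature is locally constant on the open set of nondegenerate symmetric forms, it agrees at the two endpoints.
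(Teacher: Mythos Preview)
Your proof is correct. The square-root construction via Hermite interpolation is sound (crucially, the positivity hypothesis guarantees the minimal polynomial of $u$ splits over $\bR$ with positive roots, so a real polynomial $p$ matching the jets of $\sqrt{\cdot}$ exists), and the pullback identity $\eta(x,uy)=\eta(vx,vy)$ immediately gives equality of signatures.

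This is a genuinely different route from the paper. The paper runs a continuity argument along the \emph{linear} path $u_\lambda = \lambda u + (1-\lambda)\id_V$, observes that each $u_\lambda$ still has positive eigenvalues (convex combination of $\mu>0$ and $1$), hence each $\eta_\lambda(x,y)=\eta(x,u_\lambda y)$ is nondegenerate, and concludes by local constancy of the signature. Your alternative deformation via $u^t=\exp(t\log u)$ is the same idea in spirit, but the paper's linear path is more elementary: it needs no functional calculus at all, only the trivial remark that the eigenvalue map $\mu\mapsto\lambda\mu+(1-\lambda)$ preserves positivity. Conversely, your primary square-root argument trades the topological input (connectedness of the path, open-and-closedness of fixed-signature loci) for an algebraic one (existence of a real polynomial square root), and has the pleasant feature of being a one-line computation once $v$ is in hand. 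Both approaches use the positivity of the eigenvalues in an essential way, just at different moments.
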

\begin{proof}
  Consider $u_\lambda = \lambda u + (1-\lambda) \id_V$ for $\lambda \in [0,1]$, and
  \[\eta_\lambda: x,y \mapsto \eta(x, u_\lambda y).\]
  $u_\lambda$ is autoadjoint with respect to $\eta$, so that $\eta_\lambda$ is a symmetric bilinear form. Because the eigenvalues of $u$ are positive, so are those of $u_\lambda$. Hence $u_\lambda$ is invertible, and $\eta_\lambda$ is  nondegenerate.

  Hence, $\lambda \mapsto \eta_\lambda$ is a continuous path between $\eta$ and $\eta_1$ in the space of nondegenerate symmetric bilinear forms. As the space of nondegenerate symmetric bilinear forms with a fixed signature $(s_+, s_-)$ is open for any $(s_+, s_-)$, it must also be closed, as its complement is a finite union of open subsets. Hence the signature of $\eta_\lambda$ is constant, and $\eta_0$ has the same signature as $\eta_1$.
\end{proof}

Here is a corollary of \Cref{thm:tann-main}, which we will be using in the proof of \Cref{thm:main}.
\begin{cor} \label{cor:two-quad-forms}
  Let $N \in \Lmot(\bF_q)$ be a Lefschetz motive of weight $2n$, with
  \[ \rk(N) = \dim_\bQ (\Rz(N(n))).\]
  Let
  \[\eta, \eta': N(n) \otimes N(n) \to \bUn \]
  be two quadratic forms, such that $\Rinf(\eta)$ and $\Rinf(\eta')$ are positive definite. Then $\Rz(\eta)$ and $\Rz(\eta')$ have the same signature.
\end{cor}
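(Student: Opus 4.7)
The plan is to apply \Cref{thm:tann-main} to $\mcT = \langle N(n) \rangle^{\otimes}$ viewed as an $\bR$-linear tannakian subcategory of $\Lmot(\bF_q) \otimes_\bQ \bR$, equipped with two fiber functors $\omega_1, \omega_2 \colon \mcT \to \Vect_\bR$ designed so that $\omega_1$ captures the archimedean theory (where positivity is assumed) and $\omega_2$ realizes $\Rz$ (whose signature we want to control).

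For $\omega_1$, I would restrict the fiber functor $\omega_\infty$ of \Cref{cor:rinf-fiber-fun} to $\mcT$: since $N(n)$ has weight zero, $\mcT$ is contained in the even-weight Lefschetz motives, where $\omega_\infty = \omega \circ \Rinf$ is defined and real-valued. By \Cref{prop:fiber-even-isoc}, the hypothesis that $\Rinf(\eta)$ and $\Rinf(\eta')$ are positive definite translates into positive definiteness of $\omega_1(\eta)$ and $\omega_1(\eta')$, which is exactly the input required by \Cref{thm:tann-main}.

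For $\omega_2$, I would factor through the semisimple category $\Mot^{\num}(\bF_q) \otimes \bR$. Under the assumption $\rk(N) = \dim_\bQ \Rz(N(n))$, a multiplicity count in $\Mot^{\num}(\bF_q)$ (semisimple by Jannsen) forces the image of $N(n)$ to be isomorphic to $\bUn^{\oplus r}$: by definition $\Hom_{\Mot^{\num}}(\bUn, N(n)) = \Rz(N(n))$ has dimension $r$, so the multiplicity of $\bUn$ in the image of $N(n)$ already saturates its rank. Hence the tensor subcategory generated by $N(n)$ in $\Mot^{\num}(\bF_q) \otimes \bR$ is equivalent to $\Vect_\bR$ via $\Hom(\bUn, -) \otimes \bR$, and composition with the natural functor $\Lmot(\bF_q) \to \Mot^{\num}(\bF_q)$ produces the candidate
\[ \omega_2 \colon \mcT \longrightarrow \langle N(n) \rangle^\otimes_{\Mot^{\num}(\bF_q) \otimes \bR} \xrightarrow{\sim} \Vect_\bR, \]
satisfying $\omega_2(N(n)) \cong \Rz(N(n)) \otimes_\bQ \bR$ and, since $\omega_2$ is monoidal by construction, $\omega_2(\eta) = \Rz(\eta) \otimes_\bQ \bR$ on the nose (and similarly for $\eta'$).

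The main technical point, and what I expect to be the hard step, is to verify that $\omega_2$ is actually a fiber functor, i.e.\ that it is faithful (exactness then follows from semisimplicity of $\mcT$, and monoidality is built in). My argument would be that a nonzero morphism in $\mcT$ is a nonzero Lefschetz correspondence modulo homological (equivalently numerical in $\Lmot$) equivalence; for it to become zero in $\Mot^{\num}(\bF_q)$ it would have to pair to zero against \emph{every} algebraic cycle, a strictly stronger condition than pairing to zero against every Lefschetz cycle, which would already have made it vanish in $\Lmot$. Once $\omega_2$ is established as a fiber functor, \Cref{thm:tann-main} yields that $\omega_2(\eta)$ and $\omega_2(\eta')$ have the same signature, hence that $\Rz(\eta) \otimes_\bQ \bR$ and $\Rz(\eta') \otimes_\bQ \bR$ do, and since the signature of a $\bQ$-valued symmetric bilinear form is preserved under extension of scalars to $\bR$, the same equality of signatures holds for $\Rz(\eta)$ and $\Rz(\eta')$ themselves.
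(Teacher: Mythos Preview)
Your proposal is correct and follows the same strategy as the paper: both take $\omega_1 = \omega_\infty$ from \Cref{cor:rinf-fiber-fun}, take $\omega_2$ to be $\Rz \otimes \bR$ on $\langle N(n)\rangle^\otimes$, and then invoke \Cref{thm:tann-main}. The only difference is that the paper asserts in one line that the rank hypothesis makes $\Rz$ a fiber functor on $\langle N(n)\rangle^\otimes$, whereas you unpack this by passing through $\Mot^{\num}(\bF_q)$, observing that the image of $N(n)$ there is $\bUn^{\oplus r}$, and checking faithfulness of $\Lmot \to \Mot^{\num}$; this is a legitimate justification of the step the paper leaves implicit.
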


\begin{proof}
  As $N$ has weight $2n$, $N(n)$ has weight $0$. As $\rk(N) = \dim_{\bQ}(\Rz(N(n)))$, we have that $\Rz$ defines a fiber functor
  \[ \Rz: \langle N(n) \rangle^\otimes \to \Vect_{\bQ}. \]
  By \Cref{cor:rinf-fiber-fun}, we have that $\Rinf$ also induces a fiber functor $\omega_{\infty}$ on $\langle N(n) \rangle^\otimes$. We conclude by \Cref{thm:tann-main}.
\end{proof}

\begin{remark}
  The assumption $\rk(N) = \dim_{\bQ} (\Rz(N(n)))$ does not force $N$ to be $\bUn(-n)^{\oplus \rho}$. This would be the case if $N$ was an homological motive in $\Mot(\bF_q)$. Indeed, for a Lefschetz motive to be isomorphic to $\bUn(-n)^{\oplus \rho}$, it has to contain only Lefschetz classes. In other words, the assumption $\rk(N) = \dim_{\bQ} (\Rz(N(n)))$ says that $N$ contains a lot of algebraic classes, but not necessarily Lefschetz classes.
\end{remark}

\section{Decomposition of abelian motives} \label{sec:decomp-abel-motiv}
The proof of the standard conjecture of Hodge type for abelian fourfolds in \cite{anc21} involves a decomposition result for abelian motives. The same decomposition was used in \cite{clo99} to prove cases of the $\mathrm{hom = num}$ conjecture. It is quite explicit in the sense that one can write down the algebraic cycles giving rise to the direct sum. In this article, the decompositions are finer and less explicit, they are abstracted through tannakian methods.

\begin{definition} \label{def:odd-liftable}
  We say that a motive $M \in \Lmot(\bF_q)$ is odd liftable if there exist an odd integer $m$, a $p$-adic field $K$ with ring of integers $\mathcal O_K$, residue field $\bF_{q^a}$, maximal ideal $\frkp$, and a motive $N \in \Lmot^{\mathrm{CM}}(\mathcal O_K)$ whose reduction satisfies $R_\frkp(N) \cong M_{\bF_{q^a}}^{\oplus m}$.
\end{definition}

For a simple abelian variety $A/\bF_q$, we consider its multiplicity $m$, its Frobenius rank $r$ and its center $F$ as in \Cref{not:ab-var}. The notion of enriched eigenvalues is defined in \Cref{def:enriched-eigenvalue-group}.

\begin{theorem} \label{thm:exomot}
  Let $A$ be a simple abelian variety over $\bF_q$, of dimension $g$, Frobenius rank $r$ and odd multiplicity $m$. Assume that
  \begin{enumerate}
  \item the geometric endomorphisms of $A$ are defined over $\bF_q$,
  \item $r \geqslant \frac{g}{m}-1$,
  \item there exists a totally real field $D$ disjoint from $F$ such that $A$ has $\CM$ by $L = D \cdot F$, i.e. $\deg D = m$ and $D \cdot F \hookrightarrow \End(A)$.
  \end{enumerate}
  
  Let $M$ be a simple Lefschetz motive $M$ of weight $2n$ with enriched eigenvalue $\lambda \not = [q]^n$ and $\rho(\lambda) = q^n$.
  Then $M$ has rank $2$ and is odd liftable.
\end{theorem}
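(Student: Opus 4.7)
Since $\rho$ is $G$-equivariant and $q^n$ is $G$-fixed, every Galois conjugate of $\lambda$ realizes to $q^n$. By \Cref{prop:realization-enriched-classical} and semisimplicity of Lefschetz motives, Frobenius acts on $M$ as the scalar $q^n$, so \Cref{prop:motive-rank} yields $\dim M = |G \cdot \lambda|$. The group $\Eig(A)$ is free abelian of rank $g/m + 1$ (generated by $\Pi \cup \{[q]\}$ modulo the Weil relations $[\pi][\overline \pi] = [q]$), while $\rho$ has image of rank $r+1$, so $\rk \Ker(\rho) = g/m - r \leq 1$ by hypothesis~(2). The assumption $\lambda \neq [q]^n$ forces $\mu := \lambda - [q]^n$ to be a nonzero element of $\Ker(\rho)$, which is therefore isomorphic to $\bZ$. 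A direct computation shows $\iota$ acts by $-1$ on $\Ker(\rho)$: if $\mu = \sum_i a_i[\pi_i] + b[q]$, expanding via $[\overline \pi_i] = [q] - [\pi_i]$ gives $\iota \mu = -\mu + (2b + \sum a_i)[q]$, while applying $\iota$ to $\rho(\mu) = 1$ yields $2b + \sum a_i = 0$. Since $G$ acts on $\Ker(\rho) \cong \bZ$ through $\{\pm 1\}$ and $\iota$ is nontrivial, the $G$-orbit of $\mu$ (and hence of $\lambda$) has size~$2$, giving $\dim M = 2$.

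\textbf{Odd liftability.} The $\CM$-structure by $L = D \cdot F$ of degree $2g$ allows Serre--Tate canonical lifting to produce a CM abelian variety $B$ over a $p$-adic field $K$ (enlarged so that $K$ contains the Galois closure $\tilde L$ of $L$) with residue field $\bF_{q^a}$ such that $B_\frkp$ is isogenous to $A_{\bF_{q^a}}$. The Lefschetz group $L_B$ is a torus split by $\tilde L$, and, following Milne, $X^*(L_{B,\tilde L}) \cong \{f : \Hom(L, \tilde L) \to \bZ\}/\{f = \iota f,\ \sum f = 0\}$. The reduction functor restricts to $\langle B \rangle^\otimes \to \langle A \rangle^\otimes_{\bF_{q^a}}$ and induces on character groups the pushforward $X^*(L_B) \twoheadrightarrow X^*(L_A) = \Eig(A)$ along the restriction map $\Hom(L, \tilde L) \to \Hom(F, \tilde F) = \Pi$; fibers of this map have size $m = [L : F]$ since $D$ and $F$ are disjoint with $\deg D = m$.

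The idea is then to choose a lift $\tilde \lambda \in X^*(L_B)$ of $\lambda$ such that its $G = \Gal(\tilde L/\bQ)$-orbit $O$ has size exactly $2m$, meeting each of the fibers above $\lambda$ and $\iota \lambda$ in precisely $m$ elements. The associated semisimple Lefschetz motive $N \in \langle B \rangle^\otimes_K \subset \Lmot^{\CM}(\mathcal O_K)$ has rank $2m$, and base-changed to $\tilde L$ its reduction decomposes as $\bigoplus_{\tilde \lambda' \in O} V_{\mathrm{red}(\tilde \lambda')} = V_\lambda^{\oplus m} \oplus V_{\iota\lambda}^{\oplus m}$, which descends to $M_{\bF_{q^a}}^{\oplus m}$. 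As $m$ is odd by hypothesis~(1), this exhibits $M$ as odd liftable in the sense of \Cref{def:odd-liftable}.

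\textbf{Main obstacle.} The delicate technical point is producing such $\tilde \lambda$ with $|G \cdot \tilde \lambda| = 2m$. The subgroup $\Gal(\tilde L / \tilde F) \cong \Gal(\tilde D/\bQ)$ acts transitively on each $m$-element fiber of $\Hom(L,\tilde L) \to \Pi$, but generically not freely when $D$ is non-Galois; thus one must distribute the weights of $\tilde \lambda$ carefully over a fiber so that its stabilizer in $\Stab_G(\lambda)$ has index exactly $m$. The totally real hypothesis on $D$ is expected to guarantee compatibility of this construction with the $\iota$-action pairing fibers above $\lambda$ and $\iota \lambda$, so that the two sub-orbits glue into a single $G$-orbit of size $2m$ rather than splitting further or collapsing.
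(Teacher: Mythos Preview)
Your rank argument is correct and essentially parallels the paper's, though organized via $\Ker(\rho)$ rather than the preimage $H = \rho^{-1}(q^{\bZ})$; the computation that $\iota$ acts by $-1$ on $\Ker(\rho)$ is clean and equivalent to what the paper does with $\overline H = H/[q]$.

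The liftability argument, however, is genuinely incomplete, as you yourself flag. The paper fills this gap not by abstract orbit-counting but by first extracting an \emph{explicit form} for the generator of $\Ker(\rho)$. Since $\Gal(\tilde F/\bQ)$ acts transitively on $\{\pi_1,\dots,\pi_{2g/m}\}$ while acting on $\Ker(\rho)\cong\bZ$ only through $\{\pm 1\}$, all exponents in the generator must coincide up to sign; after relabelling conjugate pairs one obtains $\mu = ([\pi_1]\cdots[\pi_{g/m}])^{\alpha}$. This has a consequence you did not isolate: the stabilizer of the \emph{set} $\{\pi_1,\dots,\pi_{g/m}\}$ in $\Gal(\tilde F/\bQ)$ is precisely the kernel of the sign character, hence has index~$2$.

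With this explicit description the lift becomes concrete rather than delicate. Fixing a single embedding $\delta\colon D\hookrightarrow\tilde L$, one takes $\tilde\lambda$ supported on $\{(\delta,\pi_1),\dots,(\delta,\pi_{g/m})\}$ with constant exponent $\alpha j$. There is no need to ``distribute weights over a fiber'' as you suggest: concentrating on one embedding of $D$ forces $\Stab_G(\tilde\lambda)\subset\Stab_G(\delta)$, which already has index $m=[D:\bQ]$. The totally real hypothesis on $D$ then enters in a precise way, not merely an expected one: it ensures $\iota\in\Stab_G(\delta)$, so the index-$2$ subgroup determined by $\{\pi_1,\dots,\pi_{g/m}\}$ is genuinely proper inside $\Stab_G(\delta)$, giving total index exactly $2m$.
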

\begin{proof}
  Let $A$ be such an abelian variety, $F$ be the center of $\End^0(A)$, $\tilde F$ be a Galois closure and
  \[\pi_1, \dots, \pi_{g/m}, \pi_{g/m+1} \dots, \pi_{2g/m}\]
  be the eigenvalues of Frobenius in $\tilde L$, such that $\overline \pi_i = \pi_{g/m+i}$. Let us first describe the group $H$ of $\lambda \in \Eig(A)$ such that $\rho(\lambda) = q^n$ for some $n$. Consider the $\Gal(\tilde F/\bQ)$-equivariant exact sequence
  \[ 0 \rightarrow H \rightarrow \Eig(A) \overset{\overline \rho}{\rightarrow} \tilde F^\times/q.\]
  By definition, $r$ is the rank of the image of $\overline \rho$. Moreover, $\Eig(A)$ has rank $g/m + 1$.

  Hence $H$ has rank $\frac{g}{m}+1 - r$, which is at most $2$ by the numerical assumption. We always have $[q] \in H$, hence $H$ either has rank $1$ or $2$. Moreover, $\Eig(A)$ being torsion free, so is $H$. Assume that $H$ has rank $1$, then any $\lambda$ with $\rho(\lambda) = q^n$ for some $n$ is itself $[q]^n$. In this case, there is no motive $M$ satisfying the assumptions.

  Otherwise, there is $\mu = [q]^{\alpha_0} [\pi_1]^{\alpha_1} \dots [\pi_{g/m}]^{\alpha_{g/m}}$ such that $H = [q]^{\bZ} \cdot \mu^{\bZ}$. Up to changing $\mu$, we will assume $\alpha_0=0$. Consider $\overline H = H/[q]$, which is generated by $\overline \mu$, and is isomorphic to $\bZ$. The group $\Gal(\tilde F/\bQ)$ acts on $\overline H$, by a sign as $\mathrm{GL}_1(\bZ) = \{\pm 1\}$. Moreover, $\Gal(\tilde F/ \bQ)$ permutes transitively $\{\pi_1, \dots, \pi_{2g/m}\}$. Hence for any $i$, $\alpha_i = \pm \alpha_1$ and up to changing $\pi_i$ with $\overline \pi_i$, we can assume $\alpha_i = \alpha_1=\alpha$. Hence $\mu = ([\pi_1] \dots [\pi_{g/m}])^\alpha$ and
  \[H = [q]^\bZ \cdot ([\pi_1] \dots [\pi_{g/m}])^{\alpha\bZ}.\]
  As $\Gal(\tilde F/\bQ)$ acts on $\overline H$ by a sign, we get that for each $\sigma \in \Gal(\tilde F/\bQ)$,
  \[ \sigma \mu =
    \begin{cases}
      \mu \; \; \; \text{ or,} \\
      [q]^{\alpha g/m} \mu^{-1}.
    \end{cases}
  \]
  Now, let $M$ be a motive as in the statement, with enriched eigenvalue $\lambda$. As $\rho(\lambda) = q^n$ for some $n$, we have $\lambda \in H$, so that
  \[\lambda = [q]^i ([\pi_1] \dots [\pi_{g/m}])^{\alpha j}.\]
  As $\rho(\lambda) = q^n$, we have by \Cref{prop:motive-rank} that $\dim( M)$ is the cardinal of the orbit of $\lambda$ under $\Gal(\tilde F/\bQ)$. As $\lambda \not = [q]^n$, we see that $j \not = 0$. Hence the orbit of $\lambda$ is
  \[\{ [q]^i \mu^j, [q]^{i+\frac{jg}{m}} \mu^{-j}\}, \]
  and $M$ has rank $2$.

  Let $D$ be a totally real field of degree $m$ disjoint from $F$, with a map $D \cdot F=L \to \End^0(A)$. Let $\tilde L$ be a Galois closure of $L$ and $G = \Gal(\tilde L/\bQ)$. Now, by \cite[Théorème 2]{tat71} there exists a $p$-adic field $K$ with rings of integers $\mathcal O_K$, residue field $\bF_{q^a}$, and an abelian scheme $\mathcal A$ over $\mathcal O_K$ with complex multiplication by $L$ such that $\mathcal A_{\frkp}$ is isogenous to $A_{\bF_{q^a}}$, moreover we can assume that the geometric endomorphisms of $\mathcal A$ are defined over $K$. According to \cite[Prop 2.5]{mil99} and \Cref{prop:simple-object-tannaka-cat}, the simple objects of $\langle \mathcal A \rangle$ are parametrized by $G$-orbits in
  \[ X^\star(L^\Psi) = \frac{\{f: \Psi \to \bZ\}}{ \{f \; | \; f = \iota f \text{ and } \sum_{\psi \in \Psi} f(\psi) = 0 \}},\]
  where $\Psi$ is the Galois orbit of $\CM$-types associated to $\mathcal A_K$. Let us fix an embedding $\tilde L \to \overline K$, then by \cite[Proposition 2.2]{mil99}, we have that $\Psi \cong \Hom(L, \tilde L)$. To a character $\chi: L \to \tilde L$ of $L$, one can associate its characteristic function
  \begin{align*}
    f_\chi: \varphi \mapsto
    \begin{cases}
      1 \text{ if } \chi = \varphi \\
      0 \text{ otherwise.}
    \end{cases}
  \end{align*}
  As $D$ and $F$ are disjoint, $D\cdot F = D \otimes F$, hence there is a $\Gal(\tilde L/\bQ)$-equivariant identification
  \[ \Hom(L, \tilde L) \cong \Hom(D, \tilde L) \times \Hom(F, \tilde L). \]
  Moreover, one has $\Hom(F, \tilde L) \cong \{\pi_1, \dots, \pi_{g/m}, \overline{\pi_1}, \dots, \overline{\pi_{g/m}}\}$. Fix a embedding $\lambda \in \Hom(D, \tilde L)$, and see $D$ as a subfield of $\tilde L$. Consider the characters $\chi_1, \dots, \chi_{g/m}$ of $L$ which correspond to $(\lambda, \pi_i)$ for $1 \leqslant i \leqslant g/m$. One has $\chi_i(\pi)=\pi_i$. By \cite[Theorem 5.4]{mil99}, reduction modulo $\frkp$ of the motive associated to
  \[ f = \alpha j f_{\chi_1} + \dots + \alpha j f_{\chi_{g/m}} \]
  has $\mu^j = [\chi_1(\pi)]^{\alpha j} \dots [\chi_{g/m}(\pi)]^{\alpha j}$ as enriched eigenvalue. Moreover,
  \[\Fix_{\Gal(\tilde L/\bQ)}([f]) = \Stab_{\Gal(\tilde L/\bQ)}(\{\chi_1, \dots, \chi_{g/m}\}),\]
  because $f = \sigma \cdot f + g$ with $\iota g = g$ if and only if
  \[\{\chi_1, \dots, \chi_{g/m}\} = \{\sigma \cdot \chi_1, \dots, \sigma \cdot \chi_{g/m} \}.\]
  Moreover,
  \[ \Stab_G(\{\chi_1, \dots, \chi_{g/m}\})\]
  has index $2$ in $\Gal(\tilde L/D)$, which has index $m$ in $G=\Gal(\tilde L/\bQ)$. Denote by $S$ the $\Gal(\tilde L/\bQ)$-orbit of $[f]$ and $N$ the simple object associated, we conclude that $\dim(N) = \#S = 2m$. Moreover, $R_{\frkp}N(-i)$ has $[q]^i \mu^j$ as enriched eigenvalue, and for dimension reasons we have
  \[R_\frkp N(-i) \cong M_{\bF_{q^a}}^m.\]
  Hence $M$ is odd liftable.
\end{proof}

\begin{prop} \label{prop:simple-lef-mot}
  Let $A$ be a simple abelian variety over $\bF_q$, of dimension $g$, Frobenius rank $r$ and odd multiplicity $m$. Assume that $r \geqslant \frac{g}{m}-1$ and that there exists a totally real field $D$ disjoint from $F$ such that $A$ has $\CM$ by $D \cdot F$. Then for any simple Lefschetz motive $M\in \langle A \rangle^\otimes$ pure of weight $2n$ with enriched eigenvalue $\lambda$, we have $3$ possibilities
  \begin{enumerate}
  \item either $\lambda = [q]^n$, in which case $M=\bUn(-n)$
  \item or $\lambda \not= [q]^n$ and $\rho(\lambda) = q^n$, in which case $M$ has rank $2$ and is odd liftable.
  \item or $\rho(\lambda) \not = q^n$, in which case $\Rz(M(n)) = 0$.
  \end{enumerate}
\end{prop}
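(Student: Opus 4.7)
The plan is to do a case analysis on the value $\rho(\lambda)$ of the realization map of \Cref{def:realization-eigenvalues}. The preliminary observation I would record is that $\rho$ is Galois equivariant and that $q^n \in \bQ$ is Galois fixed, so the condition $\rho(\lambda) = q^n$ depends only on the Galois orbit of $\lambda$; in particular, either every enriched eigenvalue of $M$ realizes to $q^n$, or none does.

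First I would dispatch case (3): if $\rho(\lambda) \neq q^n$, the above observation combined with \Cref{prop:realization-enriched-classical} shows that no Frobenius eigenvalue on $R_\ell(M(n))$ equals $1$. \Cref{rem:cycle-eigen} then forces $\Hom(\bUn, M(n)) = 0$, hence $\Rz(M(n)) = 0$.

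In the remaining cases $\rho(\lambda) = q^n$, which by \Cref{prop:realization-enriched-classical} means that Frobenius acts on $M$ by the scalar $q^n$, so \Cref{prop:motive-rank} applies and gives $\dim M = \#S$, where $S$ is the Galois orbit of $\lambda$. For case (1), $\lambda = [q]^n$ is Galois fixed so $S$ is a singleton and $M$ is a rank-one Lefschetz motive with enriched eigenvalue $[q]^n$; since $\bUn(-n)$ has the same rank and the same enriched eigenvalue, the classification of \Cref{prop:lefmot-classification} forces $M \cong \bUn(-n)$. Case (2) is precisely the conclusion of \Cref{thm:exomot}, applied with the same hypotheses on $r$, $m$ and $D$.

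The proposition thus reduces to a clean partition into the three possibilities for the pair $(\lambda, \rho(\lambda))$ modulo the Galois orbit, where only the substantive case (2) requires genuine work and that work is already done in \Cref{thm:exomot}; the rest is a short application of the $\ell$-adic realization and of the tannakian classification of simple Lefschetz motives. The implicit hypothesis that the geometric endomorphisms of $A$ be defined over $\bF_q$, which appears in \Cref{thm:exomot}, \Cref{prop:motive-rank} and \Cref{prop:lefmot-classification}, is already built into the very definition of the enriched eigenvalue $\lambda$ of $M$ (cf. \Cref{def:enriched-eigenvalues}), so no additional base-change step is needed.
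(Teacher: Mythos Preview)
Your proof is correct and follows essentially the same route as the paper: case (1) via \Cref{prop:lefmot-classification}, case (2) via \Cref{thm:exomot}, and case (3) via \Cref{rem:cycle-eigen}. The only cosmetic difference is that the paper handles case (1) directly from the classification without the detour through \Cref{prop:motive-rank}, and your explicit remark about the implicit hypothesis on geometric endomorphisms (built into \Cref{def:enriched-eigenvalues}) is a welcome clarification that the paper leaves tacit.
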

\begin{proof}
  If $\lambda = [q]^n$, then $M = \bUn(-n)$ by \Cref{prop:lefmot-classification}, because $\bUn(-n)$ also has $[q]^n$ as enriched eigenvalue. If $\lambda \not = [q]^n$, and $\rho(\lambda)=q^n$ then $M$ has rank $2$ and is odd liftable by \Cref{thm:exomot}.

  It remain to treat the case where $\rho(\lambda) \not = q^n$. According to \Cref{rem:cycle-eigen}, algebraic classes in $M(n)$ are homologically trivial in this case, hence numerically trivial. We conclude that $\Rz(M(n))=0$.
\end{proof}

\begin{prop} \label{prop:decomp-main}
  Let $A$ be a simple abelian variety over $\bF_q$, of dimension $g$, Frobenius rank $r$ and odd multiplicity $m$. Let $P$ be a Lefschetz motive in $\langle A \rangle ^\otimes$, pure of weight $2n$. Assume that $r \geqslant \frac{g}{m}-1$ and that there exists a totally real field $D$ disjoint from $F$ such that $A$ has $\CM$ by $D \cdot F$. Then there are submotives $L, (E_i)_{1 \leqslant i \leqslant d}$ and $T$ of $P$ such that
  \begin{enumerate}
  \item $P = L \oplus E_1 \oplus \dots \oplus E_d \oplus T$
  \item $L$ is isotypical of type $\bUn(-n)$
  \item $E_i$ is isotypical of type $M_i$, where $M_i$ has rank $2$ and is odd liftable,
  \item $R_Z(T(n)) = 0$.
  \end{enumerate}
  Moreover, the motives $M_i$ are nonisomorphic and $M_i(n)^\star \cong M_i(n)$.
\end{prop}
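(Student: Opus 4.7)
The plan is to use the semisimplicity of $\langle A \rangle^\otimes$ (due to Jannsen, as recalled in \Cref{sec:abel-vari-abel}) to decompose $P$ into its isotypical components, and then sort those components according to the trichotomy provided by \Cref{prop:simple-lef-mot}. Since $P$ is pure of weight $2n$, every simple summand of $P$ is pure of weight $2n$, and \Cref{prop:simple-lef-mot} partitions the isomorphism classes of such simple summands into three classes according to the enriched eigenvalue $\lambda$: (i) $\lambda = [q]^n$, forcing the summand to be $\bUn(-n)$; (ii) $\rho(\lambda) = q^n$ but $\lambda \neq [q]^n$, giving a rank-$2$ odd liftable motive; (iii) $\rho(\lambda) \neq q^n$, in which case $\Rz$ of the Tate twist vanishes. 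I would then define $L$ as the type-(i) part, group the type-(ii) summands by isomorphism class to obtain the $E_i$ together with their underlying simple motives $M_i$, and let $T$ be the sum of type-(iii) summands. Properties (1)--(3) hold by construction, and (4) follows from the additivity of $\Rz$. The $M_i$ are nonisomorphic because one representative is chosen per isomorphism class.

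The only nontrivial point is the self-duality $M_i(n)^\star \cong M_i(n)$. Via \Cref{prop:lefmot-classification}, $M_i$ corresponds to the Galois orbit $S \subset \Eig(A)$ of its enriched eigenvalues, with $\rho(\lambda) = q^n$ for every $\lambda \in S$. A direct computation on the generators $[\pi]$ of $\Eig(A)$, using the defining relation $[\pi][\overline \pi] = [q]$, shows that $\lambda \overline \lambda$ always lies in the subgroup $[q]^\bZ$. Since $\rho$ is injective on $[q]^\bZ$ (as $q>1$ has infinite order in $\tilde F^\times$), the equation $\rho(\lambda \overline \lambda) = q^{2n}$ forces $\lambda \overline \lambda = [q]^{2n}$, that is $\overline \lambda = [q]^{2n} \lambda^{-1}$. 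Complex conjugation is Galois, so $\overline \lambda \in S$, whence $S^{-1} = [q]^{-2n} S$. Under \Cref{prop:lefmot-classification}, $S^{-1}$ is the orbit attached to $M_i^\star$ and $[q]^{-2n} S$ is the orbit attached to $M_i(2n)$, so $M_i^\star \cong M_i(2n)$; twisting by $-n$ yields $M_i(n)^\star \cong M_i(n)$.

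The main obstacle, the classification of simple objects of $\langle A \rangle^\otimes$ under the numerical and $\CM$ hypotheses, is already settled by \Cref{prop:simple-lef-mot}. Granted this, the proof of \Cref{prop:decomp-main} reduces to the bookkeeping argument above together with the short computation in $\Eig(A)$ establishing self-duality.
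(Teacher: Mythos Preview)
Your proof is correct and follows essentially the same approach as the paper: decompose $P$ into isotypical components using semisimplicity, then sort them via the trichotomy of \Cref{prop:simple-lef-mot}. In fact your argument is more complete than the paper's, which simply asserts $M_i(n)^\star \cong M_i(n)$ without justification, whereas your computation in $\Eig(A)$ showing $\lambda\overline{\lambda}=[q]^{2n}$ supplies the missing detail.
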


\begin{proof}
  Let $P$ a motive in $\langle A \rangle^\otimes$, which is pure of weight $2n$. As $\langle A \rangle^\otimes$ is a semisimple tannakian category, we have the following isotypical descomposition
  \[ \bigoplus_{i = 0}^s M_i \otimes \Hom(M_i, P) \xrightarrow{\sim} P,\]
  where $M_0, \dots, M_s$ are nonisomorphic simple objects. As $P$ is pure of weight $2n$, $M_i$ is also pure of weight $2n$. Let $\lambda_i$ be the enriched eigenvalue associated to $M_i$ via \Cref{prop:lefmot-classification}, it is also of weight $2n$.
  
  According to \Cref{prop:simple-lef-mot}, there are three possibilities for each $M_i$. Without loss of generality, we can assume that
  \begin{align*}
    \lambda_0 &= [q]^n \\ \rho(\lambda_1) &= \dots = \rho(\lambda_d)= q^n \\ \rho(\lambda_i) &\not = q^n \text{ for } i > d.
  \end{align*}
  Hence the following subobjects of $P$ will satisfy the desired properties
  \begin{align*}
    L &= M_0 \otimes \Hom(M_1, P) \\
    E_i &= M_i \otimes \Hom(M_i, P) \text{ for } 1 \leqslant i \leqslant d \\
    T &= \bigoplus_{i > d} M_i \otimes \Hom(M_i, P).
  \end{align*}
  By definition, the $M_i$ are not isomorphic. Moreover, for all $i$ we have that $M_i(n)^* \cong M_i(n)$.
\end{proof}

\section{Application to positivity of intersections}\label{sec:appl-posit-inters}

This section is dedicated to the proof of \Cref{thm:main}. First, \Cref{lem:scht-lef} abstracts from the situation of an intersection pairing on primitive parts to a general pairing satisfying some conditions with respect to the functor $\Rinf$ of \Cref{prop:arch-functor-lefschetz}. Then, using \Cref{prop:decomp-main}, we have a case disjonction, and the two main cases are delt with in \Cref{lem:positivity-lef} and \Cref{lem:positivity-exo}. In the proof of \Cref{lem:positivity-exo} we reduce the case of an isotypical motive $E$ to the case of its simple constituent. The case of simple exotic motives of rank $2$ is treated in \Cref{lem:quad-existence-exo}, it is the main case of the proof, in which \Cref{cor:two-quad-forms} is used crucially.

For a simple abelian variety $A/\bF_q$, we consider its multiplicity $m$ and its rank of Frobenius as in \Cref{not:ab-var}.

We make use of the functors $\Rinf$ from \Cref{prop:arch-functor-lefschetz} and $\Rz$ from \Cref{conv:lef-mot}.

\begin{proof}[Proof of \Cref{thm:main}]
  Let $A$ be a simple abelian variety of dimension $g$ over $\Fpbar$ as in the theorem.

  Fix $d \in \mathbb N$, and let us prove the standard conjecture of Hodge type for $A^d$. According to \Cref{prop:cyc-mot-comp}, it suffices to prove for each $n\in \bN$ and each ample divisor $\mathcal L$ on $A^d$ that the quadratic form
  \[ \eta_{\mathcal L}: \frk P^{2n}_{\mathcal L}(A^d) \otimes \frk P^{2n}_{\mathcal L}(A^d) \to \bUn(-2n)\]
  is such that $\Rz(\eta_{\mathcal L}(2n))$ is positive definite on $\Rz(\frk P^{2n}_{\mathcal L}(A^d)(n))$. Let us fix $n \in \bN$, and $\mathcal L$ an ample divisor on $A^d$. We will be working with Lefschetz motives, and we write
  \[P = \frk P^{2n}_{\mathcal L}(A^d)_\lef \text{ and } \eta = \eta_{\mathcal L}(2n). \]

  We have $\Rinf(\eta)>0$ by \Cref{prop:arch-functor-lefschetz}. Hence according to \Cref{lem:scht-lef}, $\Rz(\eta)$ is positive definite, which concludes the proof.
\end{proof}

\begin{lemma} \label{lem:scht-lef}
  Let $A$ be an abelian variety over $\Fpbar$ which is simple of dimension $g$, with odd multiplicity $m$ and Frobenius rank $r$ (c.f. \Cref{rem:mult-rank-fpbar}). Assume that
  \[r \geqslant \frac{g}{m} - 1,\]
  and that there exists a totally real field $D$ disjoint from $F$ such that $A$ has $\CM$ by $D \cdot F$.
  Then for any Lefschetz motive $P$ in $\langle A \rangle^\otimes$, which is pure of weight $2n$, and any quadratic form $\eta$ on $P(n)$ such that $\Rinf(\eta)$ is positive definite, then $\Rz(\eta)$ is positive definite.
\end{lemma}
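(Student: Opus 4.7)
The strategy is to apply the decomposition from \Cref{prop:decomp-main} and handle each summand, reducing in the end to a single simple rank-$2$ odd-liftable Lefschetz motive. First I would pass from $\Fpbar$ to a sufficiently large finite field $\bF_q$ over which $A$ has a model with all geometric endomorphisms defined, and over which $P$ and $\eta$ descend; since both $\Rinf$ and $\Rz$ are compatible with this base change, hypothesis and conclusion are preserved. Applying \Cref{prop:decomp-main} yields
\[P = L \oplus E_1 \oplus \cdots \oplus E_d \oplus T,\]
with $L$ isotypical of type $\bUn(-n)$, each $E_i$ isotypical of type a simple odd-liftable rank-$2$ Lefschetz motive $M_i$, and $\Rz(T(n))=0$. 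Because the $M_i$'s are pairwise nonisomorphic and satisfy $M_i(n)^\star \cong M_i(n)$, a Schur-type argument forces $\eta$ to be orthogonal with respect to this decomposition, so I can treat each summand separately.

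On $T$ the claim is vacuous. On $L(n) \cong \bUn^{\oplus \rho}$, both $\Rinf$ and $\Rz$ realize $\eta|_L$ as the same rational symmetric matrix, over $\bR$ and $\bQ$ respectively, so positivity transfers. The bulk lies on each $E_i \cong M_i \otimes V_i$ with $V_i = \Hom(M_i, P)$ a finite-dimensional $\bQ$-vector space. Using the self-duality of $M_i(n)$, every symmetric form on $E_i(n)$ decomposes as $q_i \otimes b$ for a fixed nondegenerate form $q_i$ on $M_i(n)$ and a symmetric bilinear form $b$ on $V_i$. Diagonalizing $b$ over $\bR$ and tracking signs via the positivity of $\Rinf(\eta|_{E_i})$, the problem reduces to the following: for any simple odd-liftable rank-$2$ Lefschetz motive $M$ of weight $2n$ and any quadratic form $q$ on $M(n)$ with $\Rinf(q)$ positive definite, $\Rz(q)$ is positive definite.

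For this final step I would exploit odd liftability: pick an odd $m$, a $p$-adic field $K$ with residue field $\bF_{q^a}$, and a CM Lefschetz motive $N \in \Lmot^{\CM}(\mathcal O_K)$ of weight $2n$ with $R_\frkp(N) \cong M_{\bF_{q^a}}^{\oplus m}$. By \Cref{lem:motive-polarization}, $N(n)$ carries a form $\eta'$ whose Hodge realization is a polarization and along which $\End(N)$ acts by complex conjugation as $\eta'$-adjoint. Then \Cref{thm:ancona-marmora} forces the signature $(s'_+, s'_-)$ of $\Rz(R_\frkp \eta')$ to satisfy $4 \mid s'_-$, while \Cref{prop:arch-functor-lefschetz} makes $\Rinf(R_\frkp \eta')$ positive definite. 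To apply \Cref{cor:two-quad-forms} to the two forms $q^{\oplus m}$ and $R_\frkp \eta'$ on $M_{\bF_{q^a}}(n)^{\oplus m}$, one verifies the dimension hypothesis $\rk(R_\frkp N)=\dim_\bQ \Rz(R_\frkp N(n))$: since the enriched eigenvalues of $M$ all have $\rho$-image equal to $q^n$, Frobenius acts trivially on $R_\ell(M(n))$, and Tate's theorem on abelian varieties over finite fields identifies $\Rz(M(n))\otimes \Ql$ with $R_\ell(M(n))$, giving the required equality. The corollary then yields that $\Rz(q^{\oplus m})$ and $\Rz(R_\frkp\eta')$ have the same signature. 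Writing $(s_+, s_-)$ for the signature of $\Rz(q)$, the signature of $\Rz(q^{\oplus m})$ is $(m s_+, m s_-)$; the constraint $4\mid m s_-$ with $m$ odd and $s_- \in \{0,1,2\}$ forces $s_-=0$, so $\Rz(q)$ is positive definite.

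The main obstacle lies in the last step, specifically in the input of $p$-adic Hodge theory through \Cref{thm:ancona-marmora}: it is the divisibility $4 \mid s'_-$ that does the heavy lifting, and obtaining it necessitates the lift of $M^{\oplus m}$ rather than of $M$ itself, which is exactly where the hypothesis that the multiplicity $m$ is odd and the auxiliary totally real field $D$ of \Cref{thm:exomot} enter. A secondary delicate point is the clean reduction from the isotypical summand $E_i$ to the simple motive $M_i$ in the middle paragraph, which must be set up so that the interplay between $\Rinf$-positivity and $\Rz$-signature is preserved under the diagonalization of $b$ on $V_i$.
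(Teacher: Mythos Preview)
Your overall strategy matches the paper's: descend to a finite field, apply \Cref{prop:decomp-main}, treat the summands separately using orthogonality, and for the exotic isotypical pieces $E_i$ exploit odd liftability together with \Cref{thm:ancona-marmora} and \Cref{cor:two-quad-forms}. Two steps, however, do not go through as written.

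First, the claim that every symmetric form on $E_i(n)\cong M_i(n)^{\oplus d}$ factors as $q_i\otimes b$ for a fixed $q_i$ on $M_i(n)$ and a $\bQ$-bilinear form $b$ on $V_i$ is false. Since $\End(M_i)$ is a quadratic field $K_i$ (by \Cref{prop:motive-rank}), the space of bilinear forms on $M_i(n)^{\oplus d}$ identifies with $d\times d$ matrices over $K_i$, and a general symmetric element is not a pure tensor over $\bQ$. The paper avoids any decomposition of $\eta|_{E_i}$: once the statement is known for the simple $M_i$ (your reduced claim), it produces a single form $\eta'$ on $M_i(n)$ with both $\Rinf(\eta')$ and $\Rz(\eta')$ positive definite, and then compares $\eta|_{E_i}$ with $(\eta')^{\oplus d}$ directly via \Cref{cor:two-quad-forms} on $E_i(n)$. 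This is \Cref{lem:positivity-exo}.

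Second, your verification of the dimension hypothesis $\rk M=\dim_\bQ \Rz(M(n))$ by invoking ``Tate's theorem'' is not justified: Tate's result concerns $\h^1(A)$ (equivalently, divisors), and the Tate conjecture is not known to propagate to an arbitrary simple Lefschetz summand of $\langle A\rangle^\otimes$. The paper argues instead as follows: $\End(M)$ is a field of $\bQ$-degree $>1$ acting on the $\bQ$-vector space $\Rz(M(n))$; hence either $\Rz(M(n))=0$, in which case the positivity claim is vacuous, or $\dim_\bQ \Rz(M(n))\geqslant 2$; combined with the a priori bound $\dim_\bQ \Rz(M(n))\leqslant \rk M=2$ from the injection into $\ell$-adic cohomology (\Cref{rem:cycle-eigen}), one gets equality. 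This same hypothesis is also what is needed to feed both \Cref{thm:ancona-marmora} and \Cref{cor:two-quad-forms} in the final step.
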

\begin{proof}
  Let $A, P$ and $\eta$ be as in the statement. \Cref{prop:decomp-main} is about Lefschetz motive over $\bF_q$, hence we first need to find models for $A, P$ and $\eta$ over a finite field.

  As $A$ is an abelian variety, there exists a finite field $k_0$ and $A_0$ over $k_0$ an abelian variety such that $A= A_0 \times_{k_0} \Fpbar$, and such that $A_0$ is geometrically simple with multiplicity $m$ and Frobenius rank $r$, and whose geometrical endomorphisms are defined over $k_0$. Then, $P$ is a direct summand of some power of $\frk h^1(A)^{\otimes 2n}$. The projector defining $P$ is a Lefschetz cycle on a power of $A$, and it is defined over $k_0$ because the endomorphisms of $A$ are defined over $k_0$. This projector over $k_0$ defines a direct summand $P_0$ of some power of $\frk h^1(A_0)^{\otimes 2n}$, such that
  \[P_0\times_{k_0} \Fpbar = P.\]
  Moreover, $\Rz(P_0(n)) \subset \Rz(P(n))$ is the subset of numerical cycles in $P$ defined over $k_0$. As $\Rz(P(n))$ is a finite dimensional $\bQ$-vector space, we can assume that a finite set of numerical cycles which generates $\Rz(P(n))$ is defined over $k_0$, so that $\Rz(P_0(n)) = \Rz(P(n))$. We can also assume that the Lefschetz cycles defining $\eta$ are defined over $k_0$, so that we have a a quadratic form $\eta_0$ on $P_0(n) \in \Lmot(k_0)$.

  Now, by \Cref{prop:decomp-main} we can write $P_0$ as
  \[P_0 = L \oplus E_1 \oplus \dots \oplus E_d \oplus T, \]
  where $L$ is isotypical of type $\bUn(-n)$, $E_i$ is isotypical of type $M_i$, with $M_i$ odd liftable (in the sense of \Cref{def:odd-liftable}) of rank $2$, and $T$ is such that $\Rz(T) = 0$. As $M_i(n)^\star \cong M_i(n)$, and the motives $M_i$ are nonisomorphic, we have that $L$ and the $E_i$'s are orthogonal with respect to $\eta_0$. This shows that the decomposition
  \[ \Rz(P_0(n)) = \Rz(L(n)) \oplus \Rz(E_1(n)) \oplus \dots \oplus \Rz(E_d(n)) \oplus 0\]
  is orthogonal with respect to $\Rz(\eta_0)$. Hence, to prove that $\Rz(\eta_0)$ is positive definite, it suffices to prove that $\Rz({\eta_0}_{| L(n)})$ and $\Rz({\eta_0}_{|E_i(n)})$ are positive definite.

  On the other hand, $\Rinf(\eta_0) = \Rinf(\eta)$ is positive definite. Consequently, $\Rinf({\eta_0}_{|L})$ and $\Rinf({\eta_0}_{|E_i})$ are positive definite. By \Cref{lem:positivity-lef} and \Cref{lem:positivity-exo} we conclude that $\Rz({\eta_0}_{|L})$ and $\Rz({\eta_0}_{|E_i})$ are positive definite as well.
\end{proof}
\begin{lemma} \label{lem:positivity-lef}
  Let $\Rinf$ be the functor from \Cref{prop:arch-functor-lefschetz} and $L$ be a Lefschetz motive which is isotypical of type $\bUn(-n)$ with $n\in \bN$. For any quadratic form $\eta$ on $L(n)$, if $\Rinf(\eta)$ is positive definite, then $\Rz(\eta)$ is positive definite.
\end{lemma}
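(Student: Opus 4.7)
The plan is to exploit the fact that $L$ being isotypical of type $\bUn(-n)$ in $\Lmot(\bF_q)$ means we can fix an isomorphism $\phi\colon L(n)\xrightarrow{\sim}\bUn^{\oplus\rho}$ for some $\rho\geqslant 0$. Under $\phi$, the quadratic form $\eta$ corresponds to a symmetric bilinear map $\bUn^{\oplus\rho}\otimes\bUn^{\oplus\rho}\to\bUn$, which is in turn determined by a symmetric $\rho\times\rho$ matrix $M$ with entries in $\End_{\Lmot(\bF_q)}(\bUn)=\bQ$. The whole statement then reduces to the elementary fact that the signature of a rational symmetric matrix does not change when passing from $\bQ$ to $\bR$.

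Concretely, I would apply both functors to $\phi$ to identify the two realizations with this same matrix. Being a $\bQ$-linear tensor functor preserving the unit, $\Rz$ sends $\phi$ to an isomorphism $\Rz(L(n))\cong\bQ^{\oplus\rho}$ transporting $\Rz(\eta)$ to the bilinear form on $\bQ^{\oplus\rho}$ given by $M$. On the other side, by \Cref{cor:rinf-fiber-fun} the composition $\omega_\infty=\omega\circ\Rinf$ is a fiber functor from even weight Lefschetz motives to $\Vect_\bR$; it sends $\bUn$ to $\bR$, so $\omega_\infty(\phi)$ exhibits $\omega_\infty(L(n))$ as $\bR^{\oplus\rho}$ and transports $\omega_\infty(\eta)$ to the bilinear form on $\bR^{\oplus\rho}$ given by $M$ viewed through $\bQ\hookrightarrow\bR$.

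By \Cref{prop:fiber-even-isoc}, the positive definiteness of $\Rinf(\eta)$ in the sense of \Cref{def:positive-definite} is equivalent to the positive definiteness of $\omega_\infty(\eta)$, hence to $M$ being a positive definite real symmetric matrix. Since $M$ has rational entries, this is equivalent to $M$ being positive definite over $\bQ$, i.e.\ to $\Rz(\eta)$ being positive definite. I do not expect any serious obstruction here: the only point requiring care is the compatibility of the identifications $\Rz(\bUn)\cong\bQ$ and $\omega_\infty(\bUn)\cong\bR$ with the inclusion $\bQ\hookrightarrow\bR$, which is automatic from the tensor structure of the two functors and forces the matrix $M$ describing $\eta$ to be literally the same in both realizations.
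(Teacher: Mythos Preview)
Your proof is correct and follows essentially the same approach as the paper: since $L(n)\cong\bUn^{\oplus\rho}$, the quadratic form $\eta$ is encoded by a rational symmetric matrix, and both $\Rz$ and $\omega_\infty$ simply read off this same matrix, so positive definiteness transfers. The paper compresses this into a single line (``$L(n)$ can be seen as a $\bQ$-vector space, hence $\Rinf(L(n))=\Rz(L(n))$''), whereas you spell out the identification via $\omega_\infty$ and \Cref{prop:fiber-even-isoc} more carefully; the substance is identical.
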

\begin{proof}
  As $L$ is isotypical of type $\bUn(-n)$, $L(n)$ is isotypical of type $\bUn$. That is, $L(n)$ can be seen as a $\bQ$-vector space. Hence $\Rinf(L(n)) = \Rz(L(n))$, and $\Rz(\eta) = \Rinf(\eta)$ is positive definite.
\end{proof}

\begin{lemma} \label{lem:positivity-exo}
  Let $\Rinf$ be the functor from \Cref{prop:arch-functor-lefschetz}, and $E$ be a Lefschetz motive of weight $2n$ over $\bF_q$, which is isotypical of type $M$, where $M$ is odd liftable of rank $2$. For any quadratic form $\eta$ on $E(n)$, if $\Rinf(\eta)$ is positive definite, then $\Rz(\eta)$ is positive definite.
\end{lemma}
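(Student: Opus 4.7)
Since $E$ is isotypical of type $M$ with $M$ simple, of rank $2$ and odd liftable, we write $E \cong M^{\oplus k}$ for some $k \geq 1$. In the setup of Proposition~\ref{prop:decomp-main} (see Theorem~\ref{thm:exomot}), the enriched eigenvalue $\lambda$ of $M$ satisfies $\rho(\lambda) = q^n$, so Frobenius acts as $q^n \cdot \mathrm{id}$ on the $\ell$-adic realization of $M$. By Tate's theorem for abelian varieties we then have $\dim_\bQ \Rz(M(n)) = 2 = \rk M$, hence $\dim \Rz(E(n)) = 2k = \rk E$, and the hypothesis of Corollary~\ref{cor:two-quad-forms} is satisfied on $E(n)$. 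The plan is to construct an auxiliary quadratic form $\eta_0$ on $M(n)$ such that both $\Rinf(\eta_0)$ and $\Rz(\eta_0)$ are positive definite; setting $\eta' := \eta_0^{\oplus k}$ then yields a quadratic form on $E(n)$ whose two realizations are both positive definite, and Corollary~\ref{cor:two-quad-forms} will force $\Rz(\eta)$ to have the same signature as $\Rz(\eta')$, hence to be positive definite.

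\textbf{Construction of $\eta_0$.} Odd liftability furnishes $N \in \Lmot^{\mathrm{CM}}(\mathcal O_K)$ over the ring of integers of some $p$-adic field $K$ with residue field $\bF_{q^a}$, and an isomorphism $R_\frkp(N) \cong M^{\oplus m}_{\bF_{q^a}}$. Base-changing $M$ and $E$ to $\bF_{q^a}$ is harmless, because $\Rz(E(n)) \hookrightarrow \Rz(E_{\bF_{q^a}}(n))$ and the restriction of a positive definite form stays positive definite. I would then apply Lemma~\ref{lem:motive-polarization} to $N$, possibly after enlarging $K$ so that $N$ decomposes into geometrically simple pieces on which the lemma can be applied and the resulting forms reassembled. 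This produces a quadratic form $\tilde \eta$ on $N(n)$ such that $R_H(\tilde \eta)$ is a polarization of Hodge structure and the adjoint of $z \in \End(N)$ with respect to $\tilde \eta$ is $\overline z$. I define $\eta_0$ on $M(n)$ to be the pullback of $R_\frkp(\tilde \eta)$ along the inclusion of the first summand $M \hookrightarrow M^{\oplus m} \cong R_\frkp(N)$. By the second part of Proposition~\ref{prop:arch-functor-lefschetz}, $\Rinf(R_\frkp(\tilde \eta))$ is positive definite, hence so is its restriction $\Rinf(\eta_0)$.

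\textbf{Proving $\Rz(\eta_0)$ is positive definite.} Consider the two quadratic forms $\eta_0^{\oplus m}$ and $R_\frkp(\tilde \eta)$ on the motive $M^{\oplus m}(n) \cong R_\frkp(N)(n)$; both have $\Rinf$ positive definite, and $\rk(M^{\oplus m}) = 2m = \dim \Rz(M^{\oplus m}(n))$, so Corollary~\ref{cor:two-quad-forms} gives that $\Rz(\eta_0^{\oplus m})$ and $\Rz(R_\frkp(\tilde \eta))$ have the same signature. Theorem~\ref{thm:ancona-marmora} applied to $N$ with the form $\tilde \eta$ yields that the signature $(s_+, s_-)$ of $\Rz(R_\frkp(\tilde \eta))$ satisfies $4 \mid s_-$. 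If $\Rz(\eta_0)$ has signature $(a, b)$ on the $2$-dimensional space $\Rz(M(n))$, then $\Rz(\eta_0)^{\oplus m}$ has signature $(am, bm)$, so $4 \mid bm$. Oddness of $m$ forces $4 \mid b$, and $b \in \{0, 1, 2\}$ then forces $b = 0$, giving positivity of $\Rz(\eta_0)$.

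\textbf{Main obstacle.} The delicate step will be the construction of $\tilde \eta$ with the precise CM-Hermitian compatibility required by Theorem~\ref{thm:ancona-marmora}: Lemma~\ref{lem:motive-polarization} delivers this for geometrically simple $\CM$ Lefschetz motives, but the lift $N$ from the odd liftability hypothesis is only simple over $K$ and may decompose further over $\overline K$, so one must pass to a finite extension of $K$, apply the lemma to each geometrically simple summand, and reassemble the forms. A minor check is that $\Rz(\eta_0)$ is nondegenerate so that its signature $(a, b)$ is well defined; this follows from $\Rinf(\eta_0)$ being nondegenerate (hence $\eta_0$ itself is) together with $\Rz$ being a fiber functor on $\langle M(n) \rangle^\otimes$.
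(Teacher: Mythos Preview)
Your approach is essentially identical to the paper's: you have folded together Lemma~\ref{lem:positivity-exo} and Proposition~\ref{lem:quad-existence-exo}, constructing the auxiliary form on $M$ via the odd lift $N$, applying Theorem~\ref{thm:ancona-marmora} to get $4 \mid s_-$, and using Corollary~\ref{cor:two-quad-forms} twice (once on $M^{\oplus m}$, once on $E$) to transfer positivity. The handling of the passage to a finite extension of $K$ to make $N$ geometrically simple, and of the base change to $\bF_{q^a}$, is also the same as the paper's.

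There is, however, a genuine gap in your justification that $\dim_\bQ \Rz(M(n)) = 2$. Tate's 1966 theorem for abelian varieties over finite fields proves the Tate conjecture only for \emph{divisors}; the Tate conjecture in higher codimension on abelian varieties is open, so knowing that Frobenius acts by $q^n$ on $R_\ell(M)$ does not by itself force $\Rz(M(n))$ to be $2$-dimensional. (Note also that the lemma as stated does not place you in the setup of Proposition~\ref{prop:decomp-main}: you only know that $M$ is simple, odd liftable, of rank $2$.) The paper fixes this as follows. If $\Rz(M(n)) = 0$ then $\Rz(E(n)) = 0$ and the conclusion is trivial. Otherwise, $\End(M)$ is a field because $M$ is simple, and $\End(M) \otimes_\bQ \overline{\bQ} = \End(M_{\overline{\bQ}})$ has dimension $>1$ since the Lefschetz group is a torus and $M_{\overline \bQ}$ of rank $2$ must split into characters. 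Hence $\End(M)$ is a field of degree $\geqslant 2$ acting on the nonzero $\bQ$-space $\Rz(M(n))$, forcing $\dim_\bQ \Rz(M(n)) \geqslant 2$; combined with the a priori bound $\dim_\bQ \Rz(M(n)) \leqslant \rk M = 2$ this gives equality. Replace your Tate invocation with this argument and the rest of your proof goes through as written.
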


\begin{proof}
  Let $E$ and $M$ and $\eta$ as in the statement. If $\Rz(M(n)) = 0$, then $\Rz(E(n)) = 0$ and there is nothing to prove. As $M$ is simple, $\End(M)$ is a field. Moreover,
  \[\End(M) \otimes_{\bQ} \overline{\bQ} = \End(M_{\overline{\bQ}}),\]
  and $M_{\overline{\bQ}}$ is not simple (as the simple objects are characters) so that $\dim(\End(M)) > 1$. This shows that $\Rz(M(n))$ cannot have dimension $1$, because the field $\End(M)$ is acting on it. Hence, $\dim_\bQ \Rz(M(n)) = 2$ and $M$ is odd liftable. Hence by \Cref{lem:quad-existence-exo} there exists a quadratic form $\eta'$ on $M_{\bF_{q^a}}(n)$ for some $a$ such that $\Rz(\eta')$ and $\Rinf(\eta')$ are positive definite. Let us fix an isomorphism
  \[E \cong M^{\oplus d}.\]
  We have two quadratic forms $\eta$ and $(\eta')^{\oplus d}$ on $E_{\bF_{q^a}}(n)$, which are sent to positive definite forms by $\Rinf$. Then by \Cref{cor:two-quad-forms}, we get that $\Rz(\eta)$ and $\Rz(\eta')^{\oplus d}$ have the same signature. As $\Rz(\eta')$ is positive definite, we deduce that $\Rz(\eta)$ is also positive definite, which concludes the proof.
\end{proof}

\begin{prop} \label{lem:quad-existence-exo}
  Let $\Rinf$ be the functor from \Cref{prop:arch-functor-lefschetz} and $M$ be an irreducible motive of rank $2$, of even weights in $\Lmot(\bF_q)$ such that
  \begin{enumerate}[label=(\roman*)]
  \item $\dim \Rz(M) = 2$
  \item $M$ is odd liftable.
  \end{enumerate}
  Then there exists a quadratic form $\eta$ on $M_{\bF_{q^a}}$, for some $a$, such that $\Rz(\eta)$ and $\Rinf(\eta)$ are positive definite.
\end{prop}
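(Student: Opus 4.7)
The plan is to use odd liftability (hypothesis (ii)) to lift the situation to characteristic zero, apply the Ancona--Marmora positivity result \Cref{thm:ancona-marmora} there, and then transfer the conclusion back to $M$ via the Tannakian comparison \Cref{cor:two-quad-forms}.

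First I would use odd liftability to obtain an odd integer $m$, a $p$-adic field $K$ with residue field $\bF_{q^a}$, and a motive $N \in \Lmot^{\CM}(\mathcal O_K)$ with $R_\frkp(N) \cong M_{\bF_{q^a}}^{\oplus m}$. After enlarging $K$ and replacing $N$ by a geometrically simple summand whose reduction still contains $M$ with odd multiplicity---possible since $m$ is odd and $M$ is simple, so the odd total splits as a sum of multiplicities one of which is odd---I can assume $N$ is geometrically simple of rank $2m$. As $N$ is not a Tate twist of the unit, \Cref{lem:motive-polarization} supplies a CM structure $L = \End(N)$ (a CM field) and a quadratic form $\tilde \eta$ on $N$ whose Hodge realization is a polarization and whose adjoint on $L$ is complex conjugation. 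Then \Cref{thm:ancona-marmora} applies, its dimension hypothesis being verified by $\dim \Rz(N_\frkp) = m \cdot \dim \Rz(M) = 2m = \dim N$; it yields that $\Rz(\tilde \eta_\frkp)$ has signature $(s_+, s_-)$ with $4 \mid s_-$, while \Cref{prop:arch-functor-lefschetz} simultaneously gives that $\Rinf(\tilde \eta_\frkp)$ is positive definite.

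Next I would fix an isomorphism $N_\frkp \cong M_{\bF_{q^a}}^{\oplus m}$ and define $\eta$ on $M_{\bF_{q^a}}$ as the pullback of $\tilde \eta_\frkp$ along the inclusion of any single summand. Since restriction of a positive definite form in $\Isoc_\bR$ to a subobject remains positive definite, $\Rinf(\eta)$ is positive definite, hence $\eta$ is nondegenerate. I would then consider the orthogonal direct sum $\eta^{\oplus m}$ on $N_\frkp$: its archimedean realization is again positive definite, and $N_\frkp$ satisfies the rank hypothesis $\rk(N_\frkp) = 2m = \dim \Rz(N_\frkp)$ required by \Cref{cor:two-quad-forms}. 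That corollary forces $\Rz(\eta^{\oplus m})$ and $\Rz(\tilde \eta_\frkp)$ to share the same signature $(s_+, s_-)$, and in particular $4 \mid s_-$. Writing $(a, b)$ for the signature of $\Rz(\eta)$ with $a + b = 2$, the signature of $\Rz(\eta^{\oplus m})$ is $(ma, mb)$; since $m$ is odd and $b \in \{0, 1, 2\}$, the divisibility $4 \mid mb$ forces $b = 0$, hence $\Rz(\eta)$ is positive definite.

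The main obstacle I anticipate is that $M$ itself need not lift to characteristic zero---only $M^{\oplus m}$ does---which a priori offers no direct handle on a quadratic form on $M$. The trick is to play two competing forms on $M^{\oplus m}$, namely the reduction of the CM polarization $\tilde \eta_\frkp$ and the naive diagonal form $\eta^{\oplus m}$ built from a restriction, against each other via the Tannakian rigidity of \Cref{cor:two-quad-forms}; the oddness of $m$ is then precisely what converts the divisibility $4 \mid s_-$ produced by Ancona--Marmora into genuine positive definiteness on the single summand.
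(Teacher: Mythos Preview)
Your proof is correct and follows essentially the same strategy as the paper's: lift via odd liftability to a geometrically simple CM motive $N$, take a polarizing form from \Cref{lem:motive-polarization}, restrict its reduction to one copy of $M$ to get $\eta$ with $\Rinf(\eta)>0$, then compare $\eta^{\oplus m}$ and the reduced polarization via \Cref{cor:two-quad-forms} and conclude using $4\mid s_-$ from \Cref{thm:ancona-marmora} together with the oddness of $m$. Your reduction to a geometrically simple summand is slightly more explicit than the paper's (you spell out the parity argument for why some summand still has odd multiplicity, though you should then rename that new odd multiplicity rather than keep calling it $m$), and you helpfully make the verification of the dimension hypothesis in \Cref{thm:ancona-marmora} explicit; otherwise the two arguments coincide.
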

\begin{proof}
  Let $M$ as in the statement, and because $M$ is odd liftable (\Cref{def:odd-liftable}) we have a $p$-adic field $K$ with ring of integers $\mathcal O_K$, residue field $\bF_{q^a}$ and $N \in \Lmot^{\mathrm{CM}}(\mathcal O_K)$ with $\CM$ by $L$ such that $R_\frkp(N) \cong M_{\bF_{q^a}}^{\oplus m}$ with $m$ odd. Up to taking a finite extension, we can assume that $N$ is geometrically simple. By \Cref{lem:motive-polarization}, there exists a quadratic form $\eta_0$ on $N$ such that $R_H(\eta_0)$ is a polarization of Hodge structure and for every $z\in L$, the adjoint of $z$ with respect to $\eta_0$ is $\overline z$. Hence by \Cref{prop:arch-functor-lefschetz}, $\Rinf(\eta_{0,\frkp})$ is positive definite.

  Let $f: M \to N_\frkp$ be any nonzero map, and consider $\eta$ the quadratic form on $M$ obtained by restraining $\eta_{0,\frkp}$ to $M$ via $f$. Then $\Rinf(\eta)$ is the restriction of $\Rinf(\eta_{0,\frkp})$, hence it is positive definite.

  The quadratic forms $\eta^{\oplus m}$ and $\eta_{0,\frkp}$ on $M^{\oplus m} \cong N_\frkp$, are sent to positive definite forms by $\Rinf$. Hence $\Rz(\eta)^{\oplus m}$ and $\Rz(\eta_{0,\frkp})$ have the same signature by \Cref{cor:two-quad-forms}. We have $3$ possibilities for $\Rz(\eta)$:
  \begin{enumerate}[label=\arabic*)]
  \item If it has signature $(2,0)$ then $\Rz(\eta_{0,\frkp})$ has signature $(2m, 0)$,
  \item if it has signature $(1,1)$ then $\Rz(\eta_{0,\frkp})$ has signature $(m,m)$,
  \item if it has signature $(0,2)$ then $\Rz(\eta_{0,\frkp})$ has signature $(0,2m)$.
  \end{enumerate}
  Let $(s_+, s_-)$ be the signature of $\Rz(\eta_{0,\frkp})$, according to \Cref{thm:ancona-marmora} we have $4| s_-$. As $m$ is assumed to be odd, the only possibility is that $\Rz(\eta)$ is positive definite, which concludes the proof. \end{proof}

\printbibliography

\end{document}